\pgfplotsset{compat=1.14}
\newtheorem{definition}{Definition}
\newtheorem{lemma}{Lemma}
\newtheorem{theorem}{Theorem}
\newtheorem{series}{Series}
  \def\section{\@startsection{section}{1}%
    \z@{.7\linespacing\@plus\linespacing}{.6\linespacing}%
    {\Large\normalfont\scshape\bfseries\centering}}
\pgfplotsset{compat=1.17}
  \newcommand{\nfrac}[2]{\frac{\numprint{#1}}{\numprint{#2}}}
  \newcommand{\tnfrac}[2]{\tfrac{\numprint{#1}}{\numprint{#2}}}
\begin{document}
\selectlanguage{english}
    \title{\Large{Asymptotic expansions\\
    for the truncation error\\
    in Ramanujan-type series}}
    \author{Lorenz Milla, 05/2022}

\maketitle

\begin{abstract}

Many of the fastest known algorithms to compute $\pi$ involve generalized hypergeometric series, such as the Ramanujan-Sato series. In this paper, we investigate the rates of convergence for several such series and we give asymptotic expansions for the error of finite approximation.

For example, when using the first $n$ terms of the Chudnovskys' series, we obtain the finite approximation $\pi_n\approx \pi$. It is known that the truncation error satisfies $|\pi_n-\pi|\approx 53360^{-3n}.$ In this paper, we prove that the asymptotic expansion for the truncation error in the Chudnovskys' series is
$$\left|\pi_n-\pi\right|=53360^{-3n}\cdot\frac{\numprint{106720}\sqrt{\numprint{10005}\pi}}{\numprint{1672209}\sqrt{n}}\cdot\exp\left(\frac{A_1}{n}+\frac{A_2}{n^2}+\frac{\delta_n}{n^3}\right),$$
with $\numprint{0.006907}<\delta_n<\numprint{0.008429}$ and the exact rational values of $A_1$ and $A_2$:
\begin{align*}
 A_1&= -\nfrac{1781843197433}{7456754505816},\\
 A_2&=  -\nfrac{1080096011925710088395}{3475199235000451148614116}.
\end{align*}
Thus we demonstrate how to establish precise error bounds for the approximations for $\pi$ obtained through Ramanujan-like series for $1/\pi$.

We also give asymptotic expansions for all known rational hypergeometric series for $1/\pi$ in the appendix.
\bigskip

\noindent\textbf{Keywords:} Chudnovsky Algorithm $\cdot$ error bounds $\cdot$ inequalities $\cdot$ hypergeometric series~$\cdot$ approximations $\cdot$ asymptotic expansions $\cdot$ Stirling's formula
\bigskip

\noindent\textbf{Mathematics Subject Classification (2010):} 26D15 $\cdot$ 33C20 $\cdot$ 33B15

\end{abstract}

\thispagestyle{empty}
  \section*{Introduction}\label{secIntro}
  \renewcommand{\leftmark}{Introduction}
  \begin{subequations}
In 1914, Srinivasa Ramanujan gave 17 series for the number $\pi$. The fastest among them (see \cite[Eq. (44)]{rama1914}) is
\begin{align}
    \frac{1}{\pi }&=\frac{\sqrt{8}}{9801}\sum_{k=0}^{\infty}\frac{(4k)!}{(k!)^4}\,\frac{1103+26390\,k}{396^{4k}}.\label{rama396}
\end{align}
The brothers David and Gregory Chudnovsky found an even faster series in 1988 (see \cite[Eq. (1.5)]{chud1988}),
which was used in most recent computations of $\pi$ -- it reads
\begin{align}
    \frac{\sqrt{640320^3}}{12\pi}=\sum_{k=0}^{\infty}
\frac{(-1)^k\left(6k\right)!}{\left(3k\right)!\left(k!\right)^3}\,\frac{13591409 + 545140134\,k}{640320^{3k}}.\label{chudseries}
\end{align}
More details can be found in \cite{Berndt}.
In \cite{MillaArxiv}, we gave a detailed proof of this series using elliptic curves and the Picard Fuchs differential equation.
In \cite{millaRamJ}, we calculated the coefficients in the Chudnovskys' series using the integrality of certain non-holomorphic modular functions.
\end{subequations}


To calculate digits of $\pi$, one has to decide how many terms to use:
\begin{definition}\label{defi1}
The first $n$ terms of the Chudnovskys' series (\ref{chudseries}) yield the finite approximation $\pi_n\approx \pi$
\begin{subequations}
\begin{align}
    \frac{1}{\pi_n} &= \frac{12\cdot \numprint{545140134}}{\sqrt{\numprint{640320}^3}}\,\sum_{k=0}^{n-1}(-1)^k s_k,\label{eqdefipin}\\
    \text{with}\qquad
    s_k &= \frac{\left(6k\right)!}{\left(3k\right)!\left(k!\right)^3}\,\frac{k+S}{\numprint{640320}^{3k}}\label{eqdefisk}\\
    \text{and}\qquad S &=\nfrac{13591409}{545140134}.\label{eqdefiS}
\end{align}
Throughout the paper we denote the convergence rate of $s_k$ by
\begin{align}
    \varepsilon=\frac{6^6 / 3^3}{\numprint{640320}^{3}}=\frac{\numprint{1728}}{\numprint{640320}^{3}}=\numprint{53360}^{-3}.\label{eqdefieps}
\end{align}
\end{subequations}
\end{definition}
In Section \ref{secttheo1}, we prove the following asymptotic expansion of the truncation error in the Chudnovskys' series:

\begin{theorem}\label{theo01} 
For all $n\geq 1$, the omitted terms of the Chudnovskys' series (\ref{chudseries}) admit the asymptotic expansion
\begin{subequations}
\begin{align}
    \left|\sum_{k=n}^\infty (-1)^k s_k\right|&=\frac{s_n}{1+\varepsilon}
    \left(1 + \frac{a_1}{n} + \frac{a_2}{n^2}+\frac{e_n\varepsilon}{n^3}\right).\label{th1expans}
\end{align}
For the definition of $s_k$, $S$ and $\varepsilon$ see (\ref{eqdefisk}), (\ref{eqdefiS}) and (\ref{eqdefieps}).
The coefficients of this expansion are the rational numbers
\begin{align}
a_1 &= \frac{\varepsilon}{2(1+\varepsilon)} = \nfrac{1}{303862746112002}\label{eqa1}\\
\text{and}\qquad a_2 &=\frac{S\varepsilon}{1+\varepsilon}-\frac{ 23\varepsilon-4\varepsilon^2}{36 (1+\varepsilon)^2} = \nfrac{-62186213362465}{15388761412454497761254741334}.\label{eqa2}
\end{align}
The error term $e_n$ in this expansion satisfies
\begin{align}
    0.3216 < e_n < 0.6704.\label{th1errorbound}
\end{align}
\end{subequations}
\end{theorem}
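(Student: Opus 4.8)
The plan is to reduce the alternating tail to a first-order recursion for the normalized remainder and then treat that recursion perturbatively in $1/n$, the tiny parameter $\varepsilon$ ensuring that all higher-order contributions are numerically harmless.

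First I would record that $\varepsilon q_k<1$ for every $k\ge 1$ (below), so that $\sum_{k\ge n}(-1)^k s_k$ is alternating with strictly decreasing positive terms; its absolute value is $\sum_{j\ge 0}(-1)^j s_{n+j}$, it carries the sign $(-1)^n$, and it lies strictly between any two consecutive partial sums. Writing $V_n=\bigl|\sum_{k\ge n}(-1)^k s_k\bigr|/s_n\in(0,1)$ and $r_n=s_{n+1}/s_n$, telescoping the tail gives the exact recursion $V_n=1-r_nV_{n+1}$. Cancelling the factorials in (\ref{eqdefisk}) yields $r_n=\varepsilon\,q_n$ with the explicit rational function
$$q_n=\frac{(6n+1)(2n+1)(6n+5)}{72\,(n+1)^3}\cdot\frac{n+1+S}{n+S},$$
which tends to $1$; hence $V_n\to\tfrac1{1+\varepsilon}$, the stated leading term, and $\varepsilon q_k<1$ is immediate since $q_k$ is bounded by a small constant on $k\ge 1$ while $\varepsilon=\numprint{53360}^{-3}$.

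Next I would set $W_n=(1+\varepsilon)V_n$, turning the recursion into $W_n=(1+\varepsilon)-\varepsilon q_nW_{n+1}$, and expand $q_n=1+b_1/n+b_2/n^2+\cdots$ in powers of $1/n$, finding $b_1=-\tfrac12$ and $b_2=\tfrac{23}{36}-S$. Substituting the ansatz $\Phi_n=1+a_1/n+a_2/n^2$ and matching the coefficients of $1/n$ and $1/n^2$ forces exactly (\ref{eqa1}) and (\ref{eqa2}): the $1/n$ matching gives $a_1(1+\varepsilon)=-\varepsilon b_1$, and the $1/n^2$ matching gives $a_2(1+\varepsilon)=-\varepsilon\bigl(b_2+(b_1-1)a_1\bigr)$. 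I would then define the remainder $\rho_n=W_n-\Phi_n$, so that the quantity to be bounded is $e_n=\rho_n\,n^3/\varepsilon$; subtracting the ansatz from the recursion produces the linear recursion $\rho_n=\Psi_n-\varepsilon q_n\rho_{n+1}$, where the explicit rational defect $\Psi_n=(1+\varepsilon)-\varepsilon q_n\Phi_{n+1}-\Phi_n$ vanishes to order $\varepsilon/n^3$ precisely because $a_1,a_2$ were chosen as above. Unfolding gives $\rho_n=\sum_{j\ge 0}(-\varepsilon)^j\bigl(\prod_{i=0}^{j-1}q_{n+i}\bigr)\Psi_{n+j}$, whose leading constant is $-b_3+O(\varepsilon)$ with $-b_3=\tfrac{17}{24}-\tfrac32 S-S^2\approx 0.6703$; this is the source of the upper bound, while direct evaluation of the defect at $n=1$ produces $e_1\approx 0.3216$, the source of the lower bound.

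The hard part will be upgrading these heuristics to the rigorous two-sided bound (\ref{th1errorbound}) uniformly in $n\ge 1$. Since the limiting constant $-b_3\approx 0.6703$ lies only about $10^{-4}$ below the claimed upper bound $0.6704$, every correction must be estimated sharply rather than crudely: I would bound the product $\prod_{i<j}q_{n+i}$ from above and below by telescoping explicit estimates on $q_n-1$, control the geometric-type $\varepsilon$-series by its first omitted term, and majorize and minorize the $1/n^4$-and-higher part of $\Psi_n$ by monotone rational functions. To close the lower bound I expect to handle finitely many small $n$ by direct rational evaluation of $V_n$ through the alternating bracketing, and to invoke the uniform asymptotic estimates only for $n$ beyond an explicit threshold, matching the two regimes to obtain $0.3216<e_n<0.6704$.
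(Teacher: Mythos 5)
Your proposal is correct, and it takes a genuinely different route from the paper's proof, so a comparison is worthwhile. Your normalized remainder $W_n=(1+\varepsilon)V_n$ is exactly the paper's $f(n)$ from (\ref{eqdefifn}), and your recursion $W_n=(1+\varepsilon)-\varepsilon q_nW_{n+1}$ is the paper's identity (\ref{eqdefifn2}) up to an index shift (which is why your $b_2=\tfrac{23}{36}-S$ differs from the coefficient $\tfrac{5}{36}-S$ in Lemma \ref{lemOhne1}: both are correct in their respective conventions); your two matching equations $a_1(1+\varepsilon)=-\varepsilon b_1$ and $a_2(1+\varepsilon)=-\varepsilon\bigl(b_2+(b_1-1)a_1\bigr)$ are equivalent to the recursion (\ref{eqrekurs}) for $N=1,2$ and do reproduce (\ref{eqa1}) and (\ref{eqa2}). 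The real divergence is in how (\ref{th1errorbound}) is established. The paper proves the \emph{upper} bound first, by pairing consecutive terms and summing explicit one-sided estimates of $Q_{n,k}$ and $P_{n+2k}$ (Lemmas \ref{lemOhne1} and \ref{lemqnk}, geometric series and its moments), and only then uses the recursion, in a single step, to flip that upper bound into the \emph{lower} bound, treating $n=1$ via the closed form of the full series (which invokes the value of $\pi$). You instead run everything through the recursion: truncated ansatz $\Phi_n$, exact remainder equation $\rho_n=\Psi_n-\varepsilon q_n\rho_{n+1}$, and unfolding into the convergent series $\sum_{j\ge 0}(-\varepsilon)^j\bigl(\prod_{i<j}q_{n+i}\bigr)\Psi_{n+j}$. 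This is legitimate, and it neatly circumvents the objection raised in Section \ref{secrekurs} that the formal series (\ref{defipoweran}) diverges: you use only finitely many coefficients, and the remainder is controlled exactly rather than through the divergent expansion. Your numerical anchors also check out: the limiting constant is $\tfrac{17}{24}-\tfrac{3}{2}S-S^2\approx 0.67031<0.6704$, and your defect at $n=1$ gives $e_1\approx(1-q_1)-\tfrac12-S+\tfrac{23}{36}\approx 0.32163$, where $1-q_1\approx 0.20767$ is precisely the constant appearing in the paper's $n=1$ computation. What your route buys is a unified derivation of both bounds, a transparent origin for the numbers $0.6704$ and $0.3216$, and an $n=1$ argument by alternating bracketing of the tail instead of by the value of $\pi$; what it costs is that all the quantitative labor is concentrated in sharp two-sided rational-function estimates of $\Psi_n n^3/\varepsilon$ and of the products $\prod_i q_{n+i}$ for $n$ beyond an explicit threshold, plus finite verification below it — the analogue of the work the paper puts into Lemma \ref{lemqnk} and the moment computations. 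Your proposal defers that labor but scopes it correctly, including the crucially tight margins (about $9\cdot 10^{-5}$ at the upper end and $3\cdot 10^{-5}$ at $n=1$) and the fact that $e_n$ increases toward its limit, so the plan is feasible as stated.
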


In Section \ref{secrekurs}, we derive the recursion formula (\ref{eqrekurs}) which allows one to compute the rational coefficients $a_k$ of the asymptotic expansion (\ref{th1expans}) more efficiently and to higher orders.
But this involves a formal power series (\ref{defipoweran}) which is divergent, thus the results from Section \ref{secrekurs} do not readily yield rigorous error bounds like (\ref{th1errorbound}) in Theorem~\ref{theo01}.

In Section \ref{secttheo2}, we use Stirling's approximation and Theorem \ref{theo01} to prove the following asymptotic expansion for $|\pi_n-\pi|$ mentioned in the abstract:
\begin{theorem} \label{theo02}
If we denote by $\pi_n$ the approximation we obtain by using the first $n$ terms in the Chudnovskys' series as in (\ref{eqdefipin}), then we have the asymptotic expansion
\begin{subequations}
\begin{align}
\left|\pi_n-\pi\right|=53360^{-3n}\cdot\frac{A_0}{\sqrt{n}}\cdot\exp\left(\frac{A_1}{n}+\frac{A_2}{n^2}+\frac{\delta_n}{n^3}\right).
\end{align}
Here, the error term $\delta_n$ satisfies
\begin{align}
    \numprint{0.006907}<\delta_n<\numprint{0.008429}
\end{align}
for all $n\geq 1$ and the coefficients $A_0$, $A_1$ and $A_2$ can be expressed in terms of $S$ from (\ref{eqdefiS}), $\varepsilon$ from (\ref{eqdefieps}), $a_1$ from (\ref{eqa1}) and $a_2$ from (\ref{eqa2}):
\begin{align}
    A_0 &=\frac{6\sqrt{\pi}\cdot \numprint{545140134}}{\sqrt{\numprint{640320}^3}(1+\varepsilon)} = \frac{\numprint{106720}\cdot\sqrt{\numprint{10005}\pi}}{\numprint{1672209}},\\
    A_1&=a_1 -\frac{19}{72}+S  = -\nfrac{1781843197433}{7456754505816},\\
    A_2&= a_2 - \frac{a_1^2}{2} -\frac{S^2}{2}  = -\nfrac{1080096011925710088395}{3475199235000451148614116}.
\end{align}
\end{subequations}
\end{theorem}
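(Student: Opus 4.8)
The plan is to turn Theorem~\ref{theo01}, which controls the \emph{tail} $\bigl|\sum_{k=n}^\infty(-1)^ks_k\bigr|$, into a statement about $|\pi_n-\pi|$ by combining it with a Stirling expansion of the single term $s_n$. Writing $c=\tfrac{12\cdot \numprint{545140134}}{\sqrt{\numprint{640320}^3}}$ for the overall constant, the relations (\ref{eqdefipin}), (\ref{chudseries}) and (\ref{eqdefisk})--(\ref{eqdefiS}) exhibit $1/\pi$ and $1/\pi_n$ as the full sum and its $n$-th truncation of one and the same series, so that
\[
\frac1\pi-\frac1{\pi_n}=c\sum_{k=n}^\infty(-1)^ks_k .
\]
Since $\tfrac1\pi-\tfrac1{\pi_n}=\tfrac{\pi_n-\pi}{\pi\pi_n}$, this rearranges to the exact identity $|\pi_n-\pi|=\pi\pi_n\,c\,\bigl|\sum_{k=n}^\infty(-1)^ks_k\bigr|$. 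The right-hand side is $O(\varepsilon^n/\sqrt n)$, hence $\pi_n=\pi\bigl(1+O(\varepsilon^n)\bigr)$ and $\pi\pi_n=\pi^2\bigl(1+O(\varepsilon^n)\bigr)$; I would bound this exponentially small factor explicitly and show it perturbs $\delta_n$ by at most about $\varepsilon\approx10^{-14}$, which is negligible against the claimed interval.

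Next I would expand $s_n$ from (\ref{eqdefisk}) with an error-controlled Stirling formula $\ln m!=(m+\tfrac12)\ln m-m+\tfrac12\ln(2\pi)+\tfrac1{12m}-\tfrac1{360m^3}+R(m)$, where the remainder $R(m)$ is sign-definite and bounded by the next term $\tfrac1{1260m^5}$. Applying this to $m=6n,3n,n$ in the quotient $\tfrac{(6n)!}{(3n)!(n!)^3}$, the exponential part collapses to $(6^6/3^3)^n=\numprint{1728}^n$, the square-root prefactors to $\tfrac1{2\pi^{3/2}}n^{-3/2}$, the $\tfrac1{12m}$ terms to $-\tfrac{19}{72n}$, and the $-\tfrac1{360m^3}$ terms to $\tfrac{655}{\numprint{77760}\,n^3}$. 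Dividing by $\numprint{640320}^{3n}$ converts $\numprint{1728}^n$ into $\varepsilon^n=\numprint{53360}^{-3n}$, while the factor $n+S$ combined with $n^{-3/2}$ gives $n^{-1/2}\exp\!\bigl(\ln(1+S/n)\bigr)$. Since the Stirling series contains only odd powers of $1/m$, the entire $1/n^2$ coefficient comes from $\ln(1+S/n)=\tfrac{S}{n}-\tfrac{S^2}{2n^2}+\tfrac{S^3}{3n^3}-\cdots$, and collecting everything yields
\[
\ln s_n=-3n\ln\numprint{53360}-\ln\!\bigl(2\pi^{3/2}\sqrt n\bigr)+\frac{S-\tfrac{19}{72}}{n}-\frac{S^2}{2n^2}+\frac{\sigma_n}{n^3},
\]
where $\sigma_n=\tfrac{S^3}{3}+\tfrac{655}{\numprint{77760}}+n^3\bigl[R(6n)-R(3n)-3R(n)\bigr]$ up to the (tiny) Taylor tail of $\ln(1+S/n)$.

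I would then assemble the estimate by taking logarithms. With $P=1+\tfrac{a_1}{n}+\tfrac{a_2}{n^2}+\tfrac{e_n\varepsilon}{n^3}$ the factor in (\ref{th1expans}), one has $\ln P=\tfrac{a_1}{n}+\tfrac{a_2-a_1^2/2}{n^2}+\tfrac{e_n\varepsilon-a_1a_2+a_1^3/3}{n^3}+\cdots$; adding $\ln s_n$, $-\ln(1+\varepsilon)$ and $\ln(\pi^2c)$ the purely constant terms collapse to $\ln A_0$ precisely because $A_0=\tfrac{\sqrt\pi\,c}{2(1+\varepsilon)}$, the $-\tfrac12\ln n$ produces the $n^{-1/2}$, and reading off the $1/n$ and $1/n^2$ coefficients reproduces $A_1=a_1-\tfrac{19}{72}+S$ and $A_2=a_2-\tfrac{a_1^2}{2}-\tfrac{S^2}{2}$ exactly as stated. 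Everything of order $1/n^3$ or smaller is then forced into $\delta_n/n^3$, i.e. $\delta_n$ is the exact quantity $\sigma_n$ plus the $1/n^3$ part of $\ln P$ plus the exponentially small $\ln(\pi_n/\pi)$ correction.

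The main obstacle is the sharp uniform bound $\numprint{0.006907}<\delta_n<\numprint{0.008429}$ for \emph{all} $n\ge1$. As $n\to\infty$ every $n$-dependent piece vanishes and $\delta_n\to\tfrac{655}{\numprint{77760}}+\tfrac{S^3}{3}\approx\numprint{0.008429}$ (the contributions $e_n\varepsilon$, $a_1a_2$, $a_1^3$ being of size $10^{-14}$ or less), so the upper bound is essentially this limit; the room down to the lower bound is supplied almost entirely by the Stirling remainder $n^3[R(6n)-R(3n)-3R(n)]$, which is negative and largest in magnitude at $n=1$, where $-3R(1)$ contributes about $-\numprint{0.0015}$ and pulls $\delta_1$ down to roughly $\numprint{0.006907}$. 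I would make this rigorous by (i) using the sign-definiteness and next-term bound of $R$ to box $n^3[R(6n)-R(3n)-3R(n)]$ between explicit $O(1/n^2)$ expressions of the right sign, (ii) bounding the Taylor tails of $\ln(1+S/n)$ and $\ln P$ by their first omitted terms, (iii) inserting the bound $0.3216<e_n<0.6704$ from (\ref{th1errorbound}) for the $e_n\varepsilon$ contribution, and (iv) evaluating $\delta_n$ directly for the few small indices not yet inside the interval, the monotone decay of the remainder covering all large $n$. The delicate point is keeping these combined remainder estimates tight enough to fit inside an interval of width only about $\numprint{0.0015}$.
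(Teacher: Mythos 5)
Your proposal is correct and follows essentially the same route as the paper's own proof: the same exact identity $|\pi_n-\pi|=\pi\,\pi_n\,c\,\bigl|\sum_{k\geq n}(-1)^k s_k\bigr|$, an exponentially small bound on $\pi_n/\pi$ (the paper's Lemma \ref{lemma-pin}), an error-controlled Stirling expansion of $s_n$ with remainder boxed by the enveloping series (Lemma \ref{lem2b}; your $\tfrac{655}{77760}$ is the paper's $\tfrac{131}{15552}$), the logarithm of the Theorem \ref{theo01} factor (Lemma \ref{lem7}), and a direct evaluation at $n=1$ to obtain the lower bound $\numprint{0.006907}$. The assembly of $A_0$, $A_1$, $A_2$ and the bookkeeping of all $n^{-3}$-and-smaller contributions into $\delta_n$ match the paper's argument step for step.
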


We have chosen to calculate all terms exactly up to order $n^{-2}$,
with upper and lower bounds on the remainders $r$ like $\frac{a}{n^3}<r(n)<\frac{b}{n^3}$.
All parts of this paper can be verified without specialized software.

In Section~\ref{secconcl}, we will do a numerical analysis showing that the bounds we have proven are close to optimal.
We will also generalize our results to other hypergeometric series and we will give an asymptotic expansion for Ramanujan's series~(\ref{rama396}).

In Appendix~\ref{appmorti}, we prove an asymptotic expansion of a product of Pochhammer symbols.
This enables us to give asymptotic expansions for all known rational hypergeometric series for $1/\pi$ in Appendix \ref{AppSeries}.

  \section{Proof of Theorem \ref{theo01}}\label{secttheo1}
  \renewcommand{\leftmark}{Proof of Theorem \ref{theo01}}
  In this section, we prove Theorem \ref{theo01} using only basic methods of analysis.
For the definition of $s_n$, $S$ and $\varepsilon$ keep (\ref{eqdefisk}), (\ref{eqdefiS}) and (\ref{eqdefieps}) from Definition \ref{defi1} in mind.
\begin{lemma}\label{lemOhne1}
For all $n\geq 1$ there is $\varphi_n$ with $0.05<\varphi_n<2.19$ and
$$\frac{s_{n}}{\varepsilon\cdot s_{n-1}}=\sum_{j=0}^\infty \frac{b_j}{n^j}
=1 -\frac{1}{2n}+\frac{\frac{5}{36}-S}{n^2}+\frac{\frac{5}{72}- \frac{S}{2}+S^2}{n^3} + \frac{\varphi_n}{n^4}.$$
If $n\geq 2$, we have that $\varphi_n<0.11$ and
$$\frac{s_{n}}{\varepsilon\cdot s_{n-1}}<\exp\left(-\frac{1}{2n}-\frac{S-\frac{1}{72}}{n^2}+\frac{0.67}{n^3}\right).$$
\end{lemma}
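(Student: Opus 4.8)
The plan is to reduce both assertions to the explicit ratio of consecutive terms. First I would cancel the factorials in $s_n/s_{n-1}$: writing out the six consecutive factors $(6n)(6n-1)\cdots(6n-5)$ and using $(6n)(6n-2)(6n-4)=8\,(3n)(3n-1)(3n-2)$ together with $6n-3=3(2n-1)$, the quotient of $(6n)!/((3n)!(n!)^3)$ by its shift collapses to $24\,(6n-1)(2n-1)(6n-5)/n^3$. Absorbing $24/640320^3=\varepsilon/72$ and the ratio $(n+S)/(n-1+S)$ of the linear factors then yields the exact closed form
\[
\frac{s_n}{\varepsilon\,s_{n-1}}=\frac{(6n-1)(2n-1)(6n-5)}{72\,n^3}\cdot\frac{n+S}{n-1+S},
\]
a rational function of degree $4$ over degree $4$. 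This identity is the backbone of everything that follows.

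For the expansion with remainder $\varphi_n$, I would expand the polynomial factor exactly, $(6n-1)(2n-1)(6n-5)/(72n^3)=1-\tfrac{3}{2n}+\tfrac{23}{36n^2}-\tfrac{5}{72n^3}$, and multiply by the convergent series $(n+S)/(n-1+S)=1+\sum_{m\ge1}(1-S)^{m-1}n^{-m}$ (valid for $n\ge1$ since $|1-S|<1$); collecting coefficients reproduces $b_0,\dots,b_3$. The crucial structural point is that, after subtracting the partial sum $\sum_{j=0}^{3}b_j n^{-j}$ and clearing the common denominator $72\,n^3(n-1+S)$, the numerator is \emph{forced to be a constant} $c$: it has degree at most $4$, yet the difference is $O(n^{-4})$, so all positive-degree coefficients vanish. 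Hence $\varphi_n=n^4\cdot c/(72\,n^3(n-1+S))=b_4\cdot n/(n-1+S)$ with $b_4=c/72$. Because $1-S>0$, the factor $n/(n-1+S)$ is strictly decreasing, so $\varphi_n$ is monotone; evaluating at $n=1$, at $n=2$, and in the limit $\varphi_\infty=b_4$ delivers all four numerical bounds $0.05<\varphi_n<2.19$ and $\varphi_n<0.11$ for $n\ge2$.

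For the exponential estimate I would reuse the first part rather than re-expand from scratch. Setting $u:=b_1 n^{-1}+b_2 n^{-2}+b_3 n^{-3}+\varphi_n n^{-4}$ we have $s_n/(\varepsilon s_{n-1})=1+u$, and for $n\ge2$ the dominant term $-\tfrac1{2n}$ forces $u<0$ (indeed $|u|<1$), so every term of $\log(1+u)=-\sum_{k\ge1}|u|^k/k$ is negative and truncation gives the upper bound $\log(1+u)\le u-\tfrac12u^2+\tfrac13u^3$. Expanding this cubic in $1/n$, the $n^{-1}$ and $n^{-2}$ coefficients equal exactly $-\tfrac12$ and $-(S-\tfrac1{72})$, matching the claimed exponent, while the genuine third-order coefficient is $\tfrac{7}{72}-S+S^2\approx0.073$. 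It then remains to show that this coefficient, together with all $O(n^{-4})$ contributions, stays below $0.67\,n^{-3}$ for $n\ge2$; I would do this by bounding the tail crudely using $0<\varphi_n<0.11$ and $n\ge2$.

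The main obstacle is precisely this last step: converting the matched asymptotic expansion into a \emph{rigorous} one-sided inequality valid for every $n\ge2$, not merely asymptotically. The comfort is the large gap between the true third-order coefficient ($\approx0.073$) and the target $0.67$, which leaves ample room for generous, fully explicit tail estimates on the $O(n^{-4})$ remainder; the only delicate bookkeeping is to verify that these crude bounds never overshoot the margin at the smallest relevant value $n=2$.
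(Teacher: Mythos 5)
Your proposal is correct, and its first half coincides with the paper's own argument: the paper writes the ratio as $\bigl(1-\tfrac{1}{6n}\bigr)\bigl(1-\tfrac{1}{2n}\bigr)\bigl(1-\tfrac{5}{6n}\bigr)\cdot\tfrac{S+n}{S+n-1}$ (exactly your closed form), multiplies the cubic $1-\tfrac{3}{2n}+\tfrac{23}{36n^2}-\tfrac{5}{72n^3}$ by the geometric series $1+\sum_{k\geq 0}(1-S)^k n^{-k-1}$, and sums the geometric tail directly to obtain $\varphi_n=b_4\,n/(n-1+S)$ with $b_4=\tfrac{5}{72}-\tfrac{23S}{36}+\tfrac{3S^2}{2}-S^3$; your \emph{numerator-must-be-constant} degree argument is a harmless variant that lands on the same formula, and the numerical bounds follow from the same monotonicity and endpoint evaluations. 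Where you genuinely diverge is the exponential inequality. The paper does not reuse the part-one expansion: it takes the logarithm of each linear factor separately, giving the exact series
\begin{align*}
\ln\!\left(\frac{s_n}{\varepsilon\, s_{n-1}}\right)
=\sum_{k\geq 1}\frac{(1-S)^k-(-S)^k-\left(\tfrac16\right)^k-\left(\tfrac12\right)^k-\left(\tfrac56\right)^k}{k\,n^k},
\end{align*}
whose first two coefficients are exactly $-\tfrac12$ and $-(S-\tfrac1{72})$, and whose numerators for $k\geq 3$ are all less than $1$ (since $(1-S)^k+S^k\leq 1$); then $\sum_{k\geq 3}\tfrac{1}{kn^k}<\tfrac{1}{3n^3}\sum_{j\geq 0}2^{-j}=\tfrac{2}{3n^3}$ for $n\geq 2$ produces the constant $0.67$ in one line. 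Your route --- applying $\ln(1+u)\leq u-\tfrac12 u^2+\tfrac13 u^3$ to the negative quantity $u$ built from the part-one expansion --- is also sound: your third-order coefficient $\tfrac{7}{72}-S+S^2\approx 0.073$ is correct, and the margin to $0.67$ is ample enough that the crude tail bounds you defer (using $\varphi_n<0.11$ and $n\geq 2$) will close without difficulty. What the paper's factor-wise logging buys is that every coefficient of the log series is exact and the tail estimate is a single geometric sum with no cross terms; what your version buys is economy of machinery (one elementary log inequality plus an expansion you already have), at the price of expanding a cubic of a four-term polynomial and bounding the resulting dozen cross terms --- precisely the bookkeeping you correctly flag as the only remaining work.
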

\begin{proof}
The definition of $s_n = \frac{\left(6n\right)!}{\left(3n\right)!\left(n!\right)^3}\,\frac{n+S}{\numprint{640320}^{3n}}$ in (\ref{eqdefisk}) yields for all $n\geq 1$:
\begin{align}
  \label{defisnn}\frac{s_{n}}{\varepsilon\cdot s_{n-1}} 
    &=\left(1-\frac{1}{6n}\right)\left(1-\frac{1}{2n}\right)\left(1-\frac{5}{6n}\right)\cdot \frac{S + n}{S + n - 1}\\
    &=\left(1-\frac{3}{2n}+\frac{23}{36n^2}-\frac{5}{72n^3}\right)\cdot \left(1+\sum_{k=0}^\infty\frac{(1-S)^k}{n^{k+1}}\right)\nonumber\\
    &=1 - \frac{1}{2 n} + \frac{\frac{5}{36} - S}{n^2} + \frac{ \frac{5}{72}- \frac{S}{2}+S^2}{n^3} + \frac{\varphi_n}{n^4},\nonumber\\
\text{with}\qquad
\frac{\varphi_n}{n^4} &= \sum_{k=4}^\infty\frac{(\frac{5}{72}-\frac{23S}{36}+\frac{3S^2}{2}-S^3)(1-S)^{k-4}}{n^k} 
    =\frac{\frac{5}{72}-\frac{23S}{36}+\frac{3S^2}{2}-S^3}{n^4(1-\frac{1-S}{n})}.\nonumber
\end{align}
This proves $0.05<\varphi_n<2.19$ for $n\geq 1$ and $\varphi_n<0.11$ for $n\geq 2$. 
Next, (\ref{defisnn}) yields
\begin{align*}
    \ln\left(\frac{s_{n}}{\varepsilon\cdot s_{n-1}} \right)&=\ln\left(1-\frac{1}{6n}\right)+\ln\left(1-\frac{1}{2n}\right)+\ln\left(1-\frac{5}{6n}\right)\\
    &~~~~+\ln\left(1+ \frac{S}{n}\right)-\ln\left(1-\frac{1-S}{n}\right).
\end{align*}
Using $\ln(1-x)=-\sum_{k=1}^\infty \frac{x^k}{k}$ we obtain:
\begin{align*}
    \ln\left(\frac{s_{n}}{\varepsilon\cdot s_{n-1}} \right)&=\sum_{k=1}^\infty\frac{\left(1-S\right)^k-\left(-S\right)^k-\left(\frac{1}{6}\right)^k-\left(\frac{1}{2}\right)^k-\left(\frac{5}{6}\right)^k}{k\cdot n^k}\\
    &<-\frac{1}{2n}-\frac{S-\frac{1}{72}}{n^2}+\sum_{k=3}^\infty\frac{1}{k\cdot n^k}.
\end{align*}
For $n\geq 2$ we have $\sum_{k=3}^\infty\frac{1}{k\cdot n^k}<\frac{1}{3n^3}\sum_{k=0}^\infty\frac{1}{2^k}=\frac{2}{3n^3}$ which proves the lemma.
\end{proof}

\begin{lemma}\label{lemqnk}
For $n\geq 1$ and $k\geq 1$ we have that
\begin{align*}
    \frac{s_{n+k}}{\varepsilon^k\cdot s_{n}} 
    &<1-\frac{k}{2n}+\frac{\frac{3}{8}k^2+(\frac{19}{72}-S)k}{n^2}+\frac{1.2k^6}{n^3}.
\end{align*}
\end{lemma}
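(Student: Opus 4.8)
The plan is to turn the left-hand side into a telescoping product and feed each factor through Lemma~\ref{lemOhne1}. Since
\[
\frac{s_{n+k}}{\varepsilon^{k}s_{n}}=\prod_{j=1}^{k}\frac{s_{n+j}}{\varepsilon\,s_{n+j-1}},
\]
and every index $m=n+j$ obeys $m\geq 2$ for $n\geq 1$, $j\geq 1$, the second (exponential) estimate of Lemma~\ref{lemOhne1} applies to each factor and yields
\[
\frac{s_{n+k}}{\varepsilon^{k}s_{n}}<\exp(E),\qquad
E=\sum_{j=1}^{k}\left(-\frac{1}{2(n+j)}-\frac{S-\frac{1}{72}}{(n+j)^{2}}+\frac{0.67}{(n+j)^{3}}\right).
\]
It then suffices to bound $\exp(E)$ by the stated cubic-in-$1/n$ polynomial.

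First I would record that $E<0$ for all admissible $n,k$: in each summand the term $-\tfrac{1}{2(n+j)}$ dominates $\tfrac{0.67}{(n+j)^{3}}$ because $m\geq 2$, and the middle term $-\tfrac{S-1/72}{(n+j)^{2}}$ is negative since $S>\tfrac{1}{72}$. With $E<0$ the elementary inequality $e^{x}\leq 1+x+\tfrac{x^{2}}{2}$ (valid for $x\leq 0$) reduces the task to an upper bound for $1+E+\tfrac{E^{2}}{2}$. I would then expand the power sums $\Sigma_{p}=\sum_{j=1}^{k}(n+j)^{-p}$ by the \emph{exact} identities
\[
\frac{1}{n+j}=\frac{1}{n}-\frac{j}{n^{2}}+\frac{j^{2}}{n^{2}(n+j)},\qquad
0<\frac{j^{2}}{n^{2}(n+j)}<\frac{j^{2}}{n^{3}},
\]
together with analogous expansions for $\Sigma_{2}$ and $\Sigma_{3}$. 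The advantage of these exact remainders is that they impose no convergence restriction on $j/n$, so the bookkeeping stays valid for every $k$; summing gives $\Sigma_{1}=\tfrac{k}{n}-\tfrac{k(k+1)}{2n^{2}}+R_{1}$ with $0<R_{1}<\tfrac{1}{n^{3}}\sum_{j=1}^{k}j^{2}$, and similarly to the needed order for $\Sigma_2,\Sigma_3$.

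Substituting into $1+E+\tfrac{E^{2}}{2}$, the leading $-\tfrac{k}{2n}$ comes from $-\tfrac12\Sigma_1$; the coefficient of $1/n^{2}$ collects $\tfrac{k^{2}+k}{4}$ from the second-order part of $-\tfrac12\Sigma_1$, the contribution $(\tfrac{1}{72}-S)k$ from $-(S-\tfrac1{72})\Sigma_2$, and $\tfrac{k^{2}}{8}$ from the leading square in $\tfrac{E^{2}}{2}$. These combine to exactly $\tfrac38 k^{2}+(\tfrac{19}{72}-S)k$, which matches the claim, and all remaining contributions are of order $1/n^{3}$ or smaller.

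The main obstacle is the final bundling step: every leftover piece — the remainders of $\Sigma_{1},\Sigma_2,\Sigma_3$, the genuine $1/n^{3}$ cross terms in $\tfrac{E^{2}}{2}$, and in particular the square $R_{1}^{2}$, which is $O(k^{6}/n^{6})$ — must be absorbed into the single clean bound $\tfrac{1.2\,k^{6}}{n^{3}}$ with one uniform constant. The top power of $k$ is forced by squaring the $O(k^{3})$ remainder of $\Sigma_{1}$, which is precisely why $k^{6}$ appears, and the constant $1.2$ is a safe overestimate. To make this uniform I would peel off the easy regime $k\geq n$ separately: there each factor in the product is already $<1$, so the left-hand side is $<1$, while on the right $\tfrac{1.2\,k^{6}}{n^{3}}\geq\tfrac{k}{2n}$ whenever $k\geq n$ (indeed $2.4\,k^{5}\geq k^{5}\geq n^{2}$), so the right-hand side is $\geq 1$ and the inequality is immediate. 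The delicate expansion above is then only needed for $1\leq k<n$, where $j/n<1$ guarantees that each remainder is truly of lower order.
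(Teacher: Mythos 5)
Your proposal is correct and follows essentially the same route as the paper's proof: telescoping into the factors $\frac{s_{n+j}}{\varepsilon\, s_{n+j-1}}$, applying the exponential estimate of Lemma~\ref{lemOhne1} to each factor (every index $n+j\geq 2$), using $e^{x}\leq 1+x+\tfrac{1}{2}x^{2}$ for $x\leq 0$, and expanding the power sums so that the $1/n^{2}$ coefficient assembles to $\tfrac{3}{8}k^{2}+(\tfrac{19}{72}-S)k$ while the squared $O(k^{3}/n^{3})$ remainder produces the $k^{6}$ and all leftovers are absorbed into $\tfrac{1.2\,k^{6}}{n^{3}}$. The only deviation is your case split at $k\geq n$ (itself correctly argued), which the paper does not need: its one-sided estimates such as $\tfrac{1}{n}-\tfrac{j}{n^{2}}<\tfrac{1}{n+j}<\tfrac{1}{n}-\tfrac{j}{n^{2}}+\tfrac{j^{2}}{n^{3}}$ hold for all $j,n\geq 1$ without any restriction on $j/n$, and the final absorption only uses $k^{l}\leq k^{6}$ and $n^{-m}\leq n^{-3}$, so the whole expansion is uniform in $k$ and the split, while harmless, is unnecessary.
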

\begin{proof} Lemma \ref{lemOhne1} yields
\begin{align*}
    \frac{s_{n+k}}{\varepsilon^k\cdot s_{n}} &=\prod_{j=1}^k \frac{s_{n+j}}{\varepsilon\cdot s_{n+j-1}}
    <\exp\left(\sum_{j=1}^k\left( -\frac{1}{2(n+j)}-\frac{S-\frac{1}{72}}{(n+j)^2}+\frac{0.67}{(n+j)^3}\right)\right).
\end{align*}
For $x<0$, we have that $\exp(x)<1+x+\frac{1}{2}x^2$. Since the sum is negative, we obtain
\begin{align*}
    \frac{s_{n+k}}{\varepsilon^k\cdot s_{n}} &<1 +\sum_{j=1}^k\left( -\frac{1}{2(n+j)}-\frac{S-\frac{1}{72}}{(n+j)^2}+\frac{0.67}{(n+j)^3}\right)\\
    &~~~~+\frac{1}{2}\left(\sum_{j=1}^k\left( \frac{1}{2(n+j)}+\frac{S-\frac{1}{72}}{(n+j)^2}-\frac{0.67}{(n+j)^3}\right)\right)^2.
\end{align*}
We estimate $\frac{1}{n}-\frac{j}{n^2}<\frac{1}{n+j}<\frac{1}{n}-\frac{j}{n^2}+\frac{j^2}{n^3}$ as well as $\frac{1}{n^2}-\frac{2j}{n^3}<\frac{1}{(n+j)^2}<\frac{1}{n^2}$ and obtain \belowdisplayskip=-12pt
\begin{align*}
    \frac{s_{n+k}}{\varepsilon^k\cdot s_{n}} &< 1 +\sum_{j=1}^k\left( -\frac{1}{2n}+\frac{\frac{j}{2}-S+\frac{1}{72}}{n^2}+\frac{2j(S-\frac{1}{72})+0.67}{n^3}\right)\\
    &~~~~+\frac{1}{2}\left(\sum_{j=1}^k\left( \frac{1}{2n}-\frac{j}{2n^2}+\frac{j^2}{2n^3}+\frac{S-\frac{1}{72}}{n^2}\right)\right)^2\\
    &=1-\frac{k}{2n}+\frac{\frac{k(k+1)}{4}-Sk+\frac{k}{72}}{n^2}+\frac{k(k+1)(S-\frac{1}{72})+0.67k}{n^3}\\
    &~~~~+\frac{1}{2}\left( \frac{k}{2n}-\frac{k(k+1)}{4n^2}+\frac{k (k+1) ( 2 k+1)}{12n^3}+\frac{k(S-\frac{1}{72})}{n^2}\right)^2\\
    &\leq 1-\frac{k}{2n}+\frac{\frac{1}{4}k^2+(\frac{19}{72}-S)k}{n^2}+\frac{0.7k^2}{n^3} +\frac{1}{2}\left( \frac{k}{2n}+\frac{k^3}{2n^3}\right)^2\\
    &\leq 1-\frac{k}{2n}+\frac{\frac{1}{4}k^2+(\frac{19}{72}-S)k}{n^2}+\frac{0.7k^2}{n^3} +\frac{k^2}{8n^2}+\frac{k^4}{4 n^4} +\frac{k^6}{8 n^6}\\
    &\leq 1-\frac{k}{2n}+\frac{\frac{3}{8}k^2+(\frac{19}{72}-S)k}{n^2}+\frac{1.2k^6}{n^3}.
\end{align*}
\end{proof}

\noindent\textbf{Proof of upper bound in Theorem \ref{theo01}}:
Since $s_n$ decreases monotonically to zero, we obtain
\begin{align}
    \left|\sum_{k=n}^\infty (-1)^k s_k\right| 
    &= \sum_{k=0}^\infty (s_{n+2k}-s_{n+2k+1})
    = \sum_{k=0}^\infty s_{n+2k}\cdot\left(1-\varepsilon\cdot\frac{s_{n+2k+1}}{\varepsilon\cdot s_{n+2k}}\right)\nonumber\\
    &=s_n\cdot\sum_{k=0}^\infty \varepsilon^{2k}\cdot\underbrace{\frac{s_{n+2k}}{\varepsilon^{2k}\cdot s_n}}_{=Q_{n,k}}\cdot\underbrace{\left(1-\varepsilon\cdot\frac{s_{n+2k+1}}{\varepsilon\cdot s_{n+2k}}\right)}_{=P_{n+2k}}.\label{anflem3}
\end{align}
Lemma \ref{lemqnk} tells us that
\begin{align*}
    Q_{n,k} = \frac{s_{n+2k}}{\varepsilon^{2k}\cdot s_n}
    &< 1-\frac{2k}{2n}+\frac{\frac{3}{8}(2k)^2+(\frac{19}{72}-S)\cdot 2k}{n^2}+\frac{1.2(2k)^6}{n^3}\\
    &=1-\frac{k}{n}+\frac{\frac{3}{2}k^2+(\frac{19}{36}-2S)k}{n^2}+\frac{76.8 k^6}{n^3}.
\end{align*}
From Lemma \ref{lemOhne1} we deduce
\begin{align*}
    P_{n+2k} &= 1-\varepsilon\cdot\frac{s_{n+2k+1}}{\varepsilon\cdot s_{n+2k}}\\
    &< 1- \varepsilon\cdot\left(1 -\frac{1}{2(n+2k+1)}+\frac{\frac{5}{36}-S}{(n+2k+1)^2}+\frac{0.0576}{(n+2k+1)^3}\right)\\
    &= 1- \varepsilon+\frac{\varepsilon}{2(n+2k+1)}-\frac{(\frac{5}{36}-S)\varepsilon}{(n+2k+1)^2}-\frac{0.0576\varepsilon}{(n+2k+1)^3}.
\end{align*}
Here we write $K=2k+1$ and estimate $\frac{1}{n+K}<\frac{1}{n}-\frac{K}{n^2}+\frac{K^2}{n^3}$, as well as $\frac{1}{(n+K)^2}>\frac{1}{n^2}-\frac{2K}{n^3}$ and $\frac{1}{(n+K)^3}>\frac{1}{n^3}$:
\begingroup
\allowdisplaybreaks
\begin{align*}
    P_{n+2k} &< 1- \varepsilon+\frac{\varepsilon}{2n}-\frac{\varepsilon K}{2n^2}+\frac{\varepsilon K^2}{2n^3}-\frac{(\frac{5}{36}-S)\varepsilon}{n^2}+\frac{2(\frac{5}{36}-S)\varepsilon K}{n^3}-\frac{0.0576\varepsilon}{n^3}\\
    &=1- \varepsilon+\frac{\varepsilon}{2n}
    -\frac{\frac{\varepsilon K}{2}+(\frac{5}{36}-S)\varepsilon}{n^2}
    +\frac{\frac{\varepsilon}{2} K^2+2(\frac{5}{36}-S)\varepsilon K-0.0576\varepsilon}{n^3}\\
    &<1- \varepsilon+\frac{\varepsilon}{2n}
    -\frac{(k+\frac{23}{36}-S)\varepsilon}{n^2}
    +\frac{(2k^2+(\frac{23}{9}-4S)k +0.67032)\varepsilon}{n^3}\\
    &\leq 1- \varepsilon+\frac{\varepsilon}{2n} -\frac{(k+\frac{23}{36}-S)\varepsilon}{n^2} +\frac{(4.456k^2 +0.67032)\varepsilon}{n^3}.
\end{align*}
\endgroup
Since both estimates are positive, we may multiply them and obtain
\begin{align*}
    Q_{n,k}\cdot P_{n+2k} &< \left(1-\frac{k}{n}+\frac{\frac{3}{2}k^2+(\frac{19}{36}-2S)k}{n^2}+\frac{76.8 k^6}{n^3}\right)\\
    &~~~~\cdot\left(1- \varepsilon+\frac{\varepsilon}{2n} -\frac{(k+\frac{23}{36}-S)\varepsilon}{n^2} +\frac{(4.456k^2 +0.67032)\varepsilon}{n^3}\right)\\
    &= 1- \varepsilon + \frac{\frac{\varepsilon}{2}-(1-\varepsilon)k}{n}
    + \frac{\alpha_2(k)}{n^2} + R_3(n,k)\,,\\
\text{with}\qquad
    \alpha_2(k) &= \left(\frac{3}{2}k^2+\left(\frac{19}{36}-2S\right)k\right)\cdot(1-\varepsilon)-k\cdot\frac{\varepsilon}{2}-\left(k+\frac{23}{36}-S\right)\varepsilon\\
    &= \frac{3}{2}(1-\varepsilon)k^2
    +\left[\left(\frac{19}{36} - 2 S\right)(1 -\varepsilon)-\frac{3}{2}\varepsilon\right]k
    -\left(\frac{23}{36}-S\right)\varepsilon\,.
\end{align*}
To estimate the remaining term $R_3(n,k)$, we first omit all negative terms, then we use $\frac{1}{n^l}\leq\frac{1}{n^3}$ for $l\geq 3$ and $k^l\leq k^8$ for $1\leq l \leq 8$. Since $\varepsilon<10^{-14}$, this yields 
\begin{align*}
    R_3(n,k) &<\frac{76.9k^8 +0.67032\varepsilon}{n^3}.
\end{align*}
From (\ref{anflem3}) we obtain
\begin{align*}
    \frac{\left|\sum_{k=n}^\infty (-1)^k s_k\right| }{s_n}
     &=\sum_{k=0}^\infty \varepsilon^{2k}\cdot Q_{n,k}\cdot P_{n+2k}\\
     &<\sum_{k=0}^\infty \varepsilon^{2k}\left(1- \varepsilon + \frac{\frac{\varepsilon}{2}-(1-\varepsilon)k}{n}
    + \frac{\alpha_2(k)}{n^2} + \frac{76.9k^8 +0.67032\varepsilon}{n^3}\right)\\
    &=:c_0 + \frac{c_1}{n} + \frac{c_2}{n^2}+\frac{c_3}{n^3}.
\end{align*}
The numbers $c_0$, $c_1$ and $c_2$ can be computed using the geometric series and its moments
$\sum_{k=0}^\infty k\,\varepsilon^{2k}= \frac{\varepsilon^2}{(1-\varepsilon^2)^2}$ and
$\sum_{k=0}^\infty k^2\,\varepsilon^{2k} = \frac{\varepsilon^2(1+\varepsilon^2)}{(1-\varepsilon^2)^3}$:
{\allowdisplaybreaks
\begin{align*}
     c_0 &= (1-\varepsilon)\cdot \frac{1}{1-\varepsilon^2} = \frac{1}{1+\varepsilon},\\
     c_1 &= \frac{\varepsilon}{2}\cdot\frac{1}{1-\varepsilon^2} -(1-\varepsilon)\cdot \frac{\varepsilon^2}{(1-\varepsilon^2)^2} = \frac{\varepsilon}{2(1+\varepsilon)^2},\\
     c_2 &= \frac{3}{2}(1-\varepsilon)\cdot \frac{\varepsilon^2(1+\varepsilon^2)}{(1-\varepsilon^2)^3}
    +\left[\left(\frac{19}{36} - 2 S\right)(1 -\varepsilon)-\frac{3}{2}\varepsilon\right]\cdot \frac{\varepsilon^2}{(1-\varepsilon^2)^2}
    - \frac{\left(\frac{23}{36}-S\right)\varepsilon}{1-\varepsilon^2}\\
    &= \frac{\varepsilon S}{(1+\varepsilon)^2} + \frac{4\varepsilon^2 - 23\varepsilon}{36 (1+\varepsilon)^3}.
\end{align*}}
Here we recognize the values of $a_1=(1+\varepsilon)c_1$ and $a_2=(1+\varepsilon)c_2$ from Theorem~\ref{theo01}.
To estimate $c_3$, we use $\varepsilon^{k}\cdot k^8\leq \varepsilon$ (valid for $k\geq 1$) and thus
\begin{align*}
\sum_{k=0}^\infty \varepsilon^{2k}\cdot k^8 =\sum_{k=1}^\infty \varepsilon^{2k}\cdot k^8
    < \sum_{k=1}^\infty \varepsilon^{k}\cdot \varepsilon=\frac{\varepsilon^2}{1-\varepsilon},
\end{align*}
which implies that
\begin{align*}
   c_3 < 76.9\cdot\frac{\varepsilon^2}{1-\varepsilon}  +0.67032\varepsilon\cdot\frac{1}{1-\varepsilon^2}<\frac{0.6704\varepsilon}{1+\varepsilon}.
\end{align*}
This proves the upper bound, $e_n<0.6704$.
\qed

\noindent\textbf{Proof of lower bound in Theorem \ref{theo01}}:
 Since $s_n$ decreases monotonically to zero, we obtain
\begin{align*}
    \left|\sum_{k={n-1}}^\infty (-1)^k s_k\right| 
    &= s_{n-1}-\left|\sum_{k=n}^\infty (-1)^k s_k\right|,
\end{align*}
which implies that
\begin{subequations}
\begin{align}
    \underbrace{\frac{1+\varepsilon}{s_{n-1}}\cdot\left|\sum_{k=n-1}^\infty (-1)^k s_k\right| }_{=f(n-1)}
    &= 1+\varepsilon-\underbrace{\frac{1+\varepsilon}{s_{n}}\cdot\left|\sum_{k=n}^\infty (-1)^k s_k\right|}_{=f(n)}\cdot\,\frac{s_n}{s_{n-1}}.\label{eqdefifn}
\end{align}
Here we observe a relation between $f(n-1)$ and $-f(n)$, i.e.
\begin{align}
    f(n-1) = 1 + \varepsilon -  f(n)\cdot\frac{s_n}{\varepsilon\cdot s_{n-1}}\cdot \varepsilon,\label{eqdefifn2}
\end{align}
\end{subequations}
which allows us to transform the already proven upper bound $e_n<0.6704$ from Theorem \ref{theo01} into a lower bound:
Using $e_n<0.6704$ and Lemma \ref{lemOhne1} yields for $n\geq 2$
{\allowdisplaybreaks\begin{align}
    f(n)\cdot\frac{s_n}{\varepsilon\cdot s_{n-1}} &< \left(1+\frac{\varepsilon}{2(1+\varepsilon)n}
+\frac{\frac{\varepsilon S}{(1+\varepsilon)} + \frac{4\varepsilon^2 - 23\varepsilon}{36 (1+\varepsilon)^2}}{n^2}+\frac{0.6704\varepsilon}{n^3}\right)\nonumber\\
&~~~~~\cdot\left(1 -\frac{1}{2n}+\frac{\frac{5}{36}-S}{n^2}+\frac{0.0576}{n^3} + \frac{0.11}{n^4}\right)\nonumber\\
&< 1 -\frac{1}{2(1+\varepsilon)n}+\frac{C}{n^2}+\frac{0.113}{n^3},\nonumber\\
\text{with}\qquad
C &= \left(\frac{\varepsilon S}{1+\varepsilon} + \frac{4\varepsilon^2 - 23\varepsilon}{36 (1+\varepsilon)^2}\right)+\left(\frac{5}{36}-S\right)-\frac{\varepsilon}{4(1+\varepsilon)}\nonumber\\
&=\frac{5-22\varepsilon}{36(1+\varepsilon)^2}-\frac{S}{1+\varepsilon}.\label{eqC}
\end{align}}
This proves the lower bound on $f(n-1)$ for $n\geq 2$:
\begin{align*}
    f(n-1) &= 1+ \varepsilon - \varepsilon\cdot f(n)\cdot\frac{s_n}{\varepsilon\cdot s_{n-1}}\\
    &> 1+\frac{\varepsilon}{2(1+\varepsilon)n}-\frac{C\varepsilon}{n^2}-\frac{0.113\varepsilon}{n^3}.
\end{align*}
For $n\geq 2$ we have that $\frac{1}{(n+1)^2}<\frac{1}{n^2}-\frac{1}{n^3}$. This shows that for $n\geq 2$ we have that
\begin{align*}    f(n) &>1+\frac{\varepsilon}{2(1+\varepsilon)(n+1)}-\frac{C\varepsilon}{(n+1)^2}-\frac{0.113\varepsilon}{(n+1)^3}\\
    &>1+\frac{\varepsilon}{2(1+\varepsilon)n}-\frac{\varepsilon}{2(1+\varepsilon)n^2}
    +\frac{\varepsilon}{2(1+\varepsilon)n^3(1+1/n)}
    -\frac{C\varepsilon}{n^2}+\frac{C\varepsilon}{n^3}
    -\frac{0.113\varepsilon}{n^3}\\
    &>1+\frac{\varepsilon}{2(1+\varepsilon)n}-\frac{\frac{\varepsilon}{2(1+\varepsilon)}+C\varepsilon}{n^2}
    +\frac{0.334\varepsilon}{n^3}
    =1+\frac{a_1}{n}+\frac{a_2}{n^2} +\frac{0.334\varepsilon}{n^3}.
\end{align*}
Using the value of $C$ from (\ref{eqC}), we recognize $a_1$ and $a_2$ as in Theorem \ref{theo01} and we have proven $e_n>0.334$ for $n\geq 2$.
In the case $n=1$ we have:
\begin{align*}
    \frac{\sqrt{640320^3}}{12\pi\cdot \numprint{545140134}} &= \sum_{k=0}^\infty (-1)^k s_k= s_0 - \left|\sum_{k=1}^\infty (-1)^k s_k\right|,
\end{align*}
which implies that
\begin{align*}\ln\left(\left|\sum_{k=1}^\infty (-1)^k s_k\right|\cdot\frac{1+\varepsilon}{s_1}\right)
   &=\ln\left(\frac{(S - \frac{\sqrt{640320^3}}{12\pi\cdot \numprint{545140134}})(1+\varepsilon)}{\frac{720}{6\cdot 1^3}\cdot\frac{1+S}{640320^3}}\right) > 0.20767\varepsilon\,.
\end{align*}
This yields $a_1 + a_2 +e_1\varepsilon> 0.20767\varepsilon$ and $e_1 > 0.3216$, which finishes the proof of the lower bound on $e_n$ and of Theorem \ref{theo01}.
\qed

  \section{Sharper estimates in higher orders}\label{secrekurs}
  \renewcommand{\leftmark}{Sharper estimates in higher orders}
    Using the terms $s_n$ of the Chudnovskys' series from Definition \ref{defi1}, we denote the truncated part of the series by $f(n)$ as in (\ref{eqdefifn}). This can be written as a formal power series
\begin{align}
    f(n)=\frac{1+\varepsilon}{s_n}\cdot\left|\sum_{k=n}^\infty (-1)^k s_k\right|=:\sum_{k=0}^\infty \frac{a_k}{n^k},\label{defipoweran}
\end{align}
which we use to define the coefficients $a_k$. For $k\leq 2$, this definition is consistent with the values of $a_{k}$ from Theorem \ref{theo01}.
In this section, we explain a method how to compute the rational coefficients $a_k$ efficiently to higher orders (see (\ref{eqrekurs})), but without error bounds like (\ref{th1errorbound}).

In Lemma \ref{lemOhne1} we proved
\begin{align}
  \frac{s_{n}}{\varepsilon\cdot s_{n-1}} 
    =1 - \frac{1}{2 n} &+ \frac{\frac{5}{36} - S}{n^2} + \frac{ \frac{5}{72}- \frac{S}{2}+S^2}{n^3} \label{defibk}\\
    &    + \sum_{k=4}^\infty\frac{\left(\frac{5}{72}-\frac{23S}{36}+\frac{3S^2}{2}-S^3\right)\left(1-S\right)^{k-4}}{n^k} =: \sum_{k=0}^\infty \frac{b_k}{n^k} ,\nonumber
\end{align}
which gives $\frac{s_{n}}{s_{n-1}}=\varepsilon\cdot\sum_{k=0}^\infty \frac{b_k}{n^k}$ as a power series. Thus (\ref{eqdefifn2}) reads
\begin{align*}
f(n-1) &= 1 + \varepsilon -  f(n)\cdot\frac{s_n}{s_{n-1}}\,,\\
    \text{which yields}\qquad\sum_{k=0}^\infty \frac{a_k}{(n-1)^k} &= 1+\varepsilon - \sum_{k=0}^\infty \frac{a_k}{n^k}\cdot\varepsilon\cdot\sum_{k=0}^\infty \frac{b_k}{n^k}.
\end{align*}
Using $\frac{1}{\left(1-x\right)^k}
    = \sum_{l=0}^\infty\binom{l+k-1}{k-1}\cdot x^l$ with $x=\frac{1}{n}$ yields
\begin{align*}
    a_0 + \sum_{k=1}^\infty\sum_{l=0}^\infty \frac{a_k\cdot\binom{l+k-1}{k-1}}{n^{k+l}} &= 1+\varepsilon - \varepsilon\sum_{k=0}^\infty \sum_{l=0}^\infty \frac{a_k\cdot b_l}{n^{k+l}}.
\end{align*}
Here, $b_0=1$ yields $a_0 = 1$ and equating the coefficients for $N=k+l\geq 1$ gives the recursion formula:
\begin{align}
     -(1+\varepsilon)\cdot a_N = \varepsilon \cdot b_N+\sum_{k=1}^{N-1} a_k\cdot \left(\binom{N-1}{k-1}+\varepsilon\cdot b_{N-k}\right).\label{eqrekurs}
\end{align}
Using recursion (\ref{eqrekurs}) and the values of $b_k$ from (\ref{defibk}), we compute further terms of the sequence $a_k$:
{\allowdisplaybreaks\begin{align*}
a_{1} &= \tnfrac{1}{303862746112002} \approx 3.2910\cdot 10^{-15}, \\ 
a_{2} &= \tnfrac{-62186213362465}{15388761412454497761254741334} \approx -4.0410\cdot 10^{-15}, \\
a_{3} &= \tnfrac{20630598257083699331942595295}{4676071302050834345503446765524996702890668} \approx 4.4120\cdot 10^{-15}, \\
a_{4} &= \tnfrac{-1933230018398723806508418321549998750727169}{405966819101911798225167895714948764118181484848996742096}\\
&\approx -4.7620\cdot 10^{-15} \\
a_{10} &\approx -6.6157\cdot 10^{-15},\\
a_{100} &\approx -1.4296\cdot 10^{5},\\
a_{1000} &\approx -1.3214\cdot 10^{1049}   .
\end{align*}}
Figure \ref{fig_a200} shows the values of $|a_k|$ for $k\leq 200$.
We observe numerically that the $a_k$ have alternating signs for most $k$ and that they grow roughly like $|a_k|\approx k!/q^k$ with $q=\ln(53360^3)$, thus (\ref{defipoweran}) diverges for all $n\in\mathbb N$.

\begin{figure}[!ht]\centering\begin{tikzpicture}\begin{semilogyaxis}[ytick={1e-15,1,1e15,1e30,1e45,1e60,1e75}]
\addplot[only marks,mark=o,mark size = 1pt]  coordinates {
(0, 1.0)
(47, 2.0082355804684328e-15)
(80, 1.879822258077603e-05)
(113, 54579149491.16738)
(146, 2.1140026418083852e+30)
(179, 1.2949226332250464e+53)
};
\addplot[only marks,mark=*,mark size = 0.25pt]  coordinates { 
(0, 1.0)
(1, 3.2909595295745995e-15)
(2, 4.041014848156408e-15)
(3, 4.411951171068653e-15)
(4, 4.762039475727241e-15)
(5, 5.1002121578284976e-15)
(6, 5.426691919455015e-15)
(7, 5.741484313004556e-15)
(8, 6.044589476908377e-15)
(9, 6.336007414616045e-15)
(10, 6.615738126209871e-15)
(11, 6.8837816116843195e-15)
(12, 7.140137871023677e-15)
(13, 7.384806904195618e-15)
(14, 7.617788711133947e-15)
(15, 7.839083291703229e-15)
(16, 8.048690645626639e-15)
(17, 8.246610772338888e-15)
(18, 8.432843670686586e-15)
(19, 8.607389338318345e-15)
(20, 8.770247770444168e-15)
(21, 8.921418957313615e-15)
(22, 9.060902879092885e-15)
(23, 9.188699495464522e-15)
(24, 9.304808724525828e-15)
(25, 9.409230399999728e-15)
(26, 9.501964184516474e-15)
(27, 9.583009393960886e-15)
(28, 9.652364641862293e-15)
(29, 9.710027119820795e-15)
(30, 9.755991142163656e-15)
(31, 9.790245203887874e-15)
(32, 9.812766035849041e-15)
(33, 9.82350659783493e-15)
(34, 9.822371838281023e-15)
(35, 9.80916977719186e-15)
(36, 9.783512831671371e-15)
(37, 9.744618851773736e-15)
(38, 9.690910092616495e-15)
(39, 9.61920522109651e-15)
(40, 9.523091977648919e-15)
(41, 9.389650843156529e-15)
(42, 9.192861153153668e-15)
(43, 8.880335074592884e-15)
(44, 8.346637432437416e-15)
(45, 7.379645865160209e-15)
(46, 5.5527456633778915e-15)
(47, 2.0082355804684328e-15)
(48, 4.977694026520427e-15)
(49, 1.8868403077456763e-14)
(50, 4.661994337627207e-14)
(51, 1.0219877789672568e-13)
(52, 2.1363792422107258e-13)
(53, 4.371862923601025e-13)
(54, 8.856726715798324e-13)
(55, 1.785346524528658e-12)
(56, 3.589748947766366e-12)
(57, 7.207758795589076e-12)
(58, 1.4460138207761355e-11)
(59, 2.8993201765461314e-11)
(60, 5.810659921688739e-11)
(61, 1.1640818087664724e-10)
(62, 2.331186526151132e-10)
(63, 4.666618830775e-10)
(64, 9.337852941054525e-10)
(65, 1.8676221587254982e-09)
(66, 3.733302226469138e-09)
(67, 7.457707808586114e-09)
(68, 1.4884810353367868e-08)
(69, 2.9674386064815403e-08)
(70, 5.9064962322680835e-08)
(71, 1.1729950268286698e-07)
(72, 2.3218351044441445e-07)
(73, 4.573376563444844e-07)
(74, 8.941471190546127e-07)
(75, 1.7280651689002767e-06)
(76, 3.278581391638009e-06)
(77, 6.031522339120319e-06)
(78, 1.0501246165725048e-05)
(79, 1.6348586747537364e-05)
(80, 1.879822258077603e-05)
(81, 3.982720808738019e-06)
(82, 0.00013250336109887616)
(83, 0.0006383465101880013)
(84, 0.002396554302883627)
(85, 0.008153505980392776)
(86, 0.026392916584382645)
(87, 0.08306087586476485)
(88, 0.2570000125266866)
(89, 0.7867772456918646)
(90, 2.3922099526656915)
(91, 7.24089914005621)
(92, 21.850941058125553)
(93, 65.80137974980528)
(94, 197.85074491557998)
(95, 594.1927314168967)
(96, 1782.7158869143498)
(97, 5343.5165744241895)
(98, 16000.863967481342)
(99, 47859.75616721114)
(100, 142956.3201862908)
(101, 426269.3502974633)
(102, 1268196.8679096587)
(103, 3761777.860005924)
(104, 11113659.56823421)
(105, 32654773.973062713)
(106, 95225604.00699961)
(107, 274752485.7319567)
(108, 780684043.8842802)
(109, 2168213797.4202166)
(110, 5810864631.729396)
(111, 14662723478.690226)
(112, 32925545622.269943)
(113, 54579149491.16738)
(114, 12888035366.693161)
(115, 744654557987.1847)
(116, 5056286653705.428)
(117, 26452521492385.773)
(118, 124470627776728.36)
(119, 553770591379174.06)
(120, 2382298919262753.5)
(121, 1.002857289581635e+16)
(122, 4.160058998572747e+16)
(123, 1.7079927355717974e+17)
(124, 6.960642970634415e+17)
(125, 2.8211815541448187e+18)
(126, 1.1386869154953361e+19)
(127, 4.580980201498246e+19)
(128, 1.8380209512239838e+20)
(129, 7.357637160699648e+20)
(130, 2.939009742250695e+21)
(131, 1.1715202150964033e+22)
(132, 4.65943481701096e+22)
(133, 1.848570839761916e+23)
(134, 7.312605110250279e+23)
(135, 2.882504964301818e+24)
(136, 1.1312284029276437e+25)
(137, 4.414563874798357e+25)
(138, 1.7102258393244854e+26)
(139, 6.561735029976127e+26)
(140, 2.4848116663749835e+27)
(141, 9.239198978859648e+27)
(142, 3.345590637662896e+28)
(143, 1.1632336843167246e+29)
(144, 3.7783533492400305e+29)
(145, 1.0743363719284075e+30)
(146, 2.1140026418083852e+30)
(147, 2.4511599848184068e+30)
(148, 6.428659839837536e+31)
(149, 5.29249124610563e+32)
(150, 3.475737115629003e+33)
(151, 2.068627823264712e+34)
(152, 1.1660041412185884e+35)
(153, 6.353124897456314e+35)
(154, 3.3836131134054264e+36)
(155, 1.7733099550013996e+37)
(156, 9.184635980629339e+37)
(157, 4.714777469874982e+38)
(158, 2.4034964226088942e+39)
(159, 1.2184595847614617e+40)
(160, 6.148722456070331e+40)
(161, 3.0906636415852515e+41)
(162, 1.548085577471233e+42)
(163, 7.728810637108602e+42)
(164, 3.8461676731425207e+43)
(165, 1.907630544284562e+44)
(166, 9.42738238420612e+44)
(167, 4.640011345777511e+45)
(168, 2.272915753187984e+46)
(169, 1.1070558424644796e+47)
(170, 5.35431144579662e+47)
(171, 2.5667792875823907e+48)
(172, 1.2164637763113847e+49)
(173, 5.678087992428711e+49)
(174, 2.5955190548388472e+50)
(175, 1.1513065403157659e+51)
(176, 4.876845148433477e+51)
(177, 1.9104822348686964e+52)
(178, 6.386036175659199e+52)
(179, 1.2949226332250464e+53)
(180, 4.898593279812956e+53)
(181, 9.261213745940633e+54)
(182, 8.708876624379776e+55)
(183, 6.79500382044363e+56)
(184, 4.8572956708937355e+57)
(185, 3.301316107503358e+58)
(186, 2.1720030758745345e+59)
(187, 1.3972250253800478e+60)
(188, 8.842361327790931e+60)
(189, 5.527139059294747e+61)
(190, 3.421690919042422e+62)
(191, 2.1018978526755253e+63)
(192, 1.282904518930317e+64)
(193, 7.787517634965472e+64)
(194, 4.704435666753465e+65)
(195, 2.829439013885855e+66)
(196, 1.6946231839390953e+67)
(197, 1.0107544377781954e+68)
(198, 6.002945013981984e+68)
(199, 3.5489482265877906e+69)
(200, 2.087549816316693e+70)
};
\addplot[solid,color=darkgray] coordinates { 
(1, 0.030623697496693213)
(2, 0.0018756216967379482)
(3, 0.00017231541437741216)
(4, 2.1107740495644843e-05)
(5, 3.2319852988866444e-06)
(6, 5.938520406411852e-07)
(7, 1.2730161675272734e-07)
(8, 3.118756961820396e-08)
(9, 8.59570827880444e-09)
(10, 2.632323700999287e-09)
(11, 8.867263320605589e-10)
(12, 3.258580674644987e-10)
(13, 1.2972672550356813e-10)
(14, 5.561796798680958e-11)
(15, 2.5548417405132354e-11)
(16, 1.25181920981444e-11)
(17, 6.51700657632418e-12)
(18, 3.5923470835916157e-12)
(19, 2.0902080574297168e-12)
(20, 1.280197985117568e-12)
(21, 8.232923124744477e-13)
(22, 5.546696040285513e-13)
(23, 3.906787857807621e-13)
(24, 2.8713669489901126e-13)
(25, 2.1982968211969035e-13)
(26, 1.7503193983671828e-13)
(27, 1.447233798011148e-13)
(28, 1.2409502010439466e-13)
(29, 1.1020720233916829e-13)
(30, 1.0124856079174651e-13)
(31, 9.611876422752194e-14)
(32, 9.419238270270749e-14)
(33, 9.518912813455559e-14)
(34, 9.911146420886437e-14)
(35, 1.0623058244353096e-13)
(36, 1.1711423597933597e-13)
(37, 1.3269942460193857e-13)
(38, 1.544223874058106e-13)
(39, 1.8443039466667576e-13)
(40, 2.259176246187208e-13)
(41, 2.836557528153031e-13)
(42, 3.648366946313303e-13)
(43, 4.804238885995817e-13)
(44, 6.473436567249797e-13)
(45, 8.920825343977047e-13)
(46, 1.2566678210720215e-12)
(47, 1.8087393146978944e-12)
(48, 2.6587337099368565e-12)
(49, 3.989592586010991e-12)
(50, 6.108803824452529e-12)
(51, 9.540782179720518e-12)
(52, 1.5193049422387327e-11)
(53, 2.465916952686545e-11)
(54, 4.0778367197961764e-11)
(55, 6.868314098137021e-11)
(56, 1.1778657702202797e-10)
(57, 2.056024487219318e-10)
(58, 3.651858172659094e-10)
(59, 6.598170598839998e-10)
(60, 1.2123622827027071e-09)
(61, 2.264747963915178e-09)
(62, 4.300007306297798e-09)
(63, 8.295973747842967e-09)
(64, 1.625941699164488e-08)
(65, 3.236502538260722e-08)
(66, 6.541502528812423e-08)
(67, 1.3421774639285756e-07)
(68, 2.794965691671474e-07)
(69, 5.905860686021436e-07)
(70, 1.2660150377443347e-06)
(71, 2.752674369492447e-06)
(72, 6.069388838273095e-06)
(73, 1.356830032743495e-05)
(74, 3.07478528331022e-05)
(75, 7.062097078753224e-05)
(76, 0.00016436331872033487)
(77, 0.0003875727665073963)
(78, 0.0009257750704393631)
(79, 0.0022397018008620774)
(80, 0.005487036034591943)
(81, 0.013610699865820699)
(82, 0.034178416343552426)
(83, 0.08687356709074175)
(84, 0.22347274647982687)
(85, 0.5817027518910416)
(86, 1.531994463193851)
(87, 4.081634145648793)
(88, 10.999536184429978)
(89, 29.97933571571641)
(90, 82.6270297298919)
(91, 230.26140987836226)
(92, 648.7339300374888)
(93, 1847.5967415257367)
(94, 5318.542908585862)
(95, 15472.97766979305)
(96, 45488.61960315181)
(97, 135123.88344809978)
(98, 405523.3072667178)
(99, 1229443.685869859)
(100, 3765011.1525298078)
(101, 11645154.82328166)
(102, 36375005.258254215)
(103, 114735527.2193484)
(104, 365417112.07964927)
(105, 1174994425.546454)
(106, 3814163427.912955)
(107, 12498005211.071173)
(108, 41335394136.76141)
(109, 137976844048.6337)
(110, 464789724706.3151)
(111, 1579927372116.5034)
(112, 5418920405969.42)
(113, 18752053857630.402)
(114, 65465323624666.055)
(115, 230550880728563.12)
(116, 818997169767214.9)
(117, 2934444424579093.0)
(118, 1.060389752874184e+16)
(119, 3.8642935474528616e+16)
(120, 1.4200674796274392e+17)
(121, 5.262013747441272e+17)
(122, 1.9659362701460073e+18)
(123, 7.405121229071627e+18)
(124, 2.8119751863233393e+19)
(125, 1.0764134684271685e+20)
(126, 4.153433815248472e+20)
(127, 1.6153574592799609e+21)
(128, 6.331931927298105e+21)
(129, 2.5014024660548774e+22)
(130, 9.958285016933106e+22)
(131, 3.994969554071796e+23)
(132, 1.6148977565476015e+24)
(133, 6.577400671152102e+24)
(134, 2.699086001469979e+25)
(135, 1.1158559085587743e+26)
(136, 4.6473421959298267e+26)
(137, 1.9497675815332899e+27)
(138, 8.239854779591373e+27)
(139, 3.507454000597742e+28)
(140, 1.5037569441702033e+29)
(141, 6.493134285121634e+29)
(142, 2.823581678172206e+30)
(143, 1.2364997097245795e+31)
(144, 5.452731801411046e+31)
(145, 2.4212507336466584e+32)
(146, 1.0825556904502899e+33)
(147, 4.8733231241880095e+33)
(148, 2.2087397627498477e+34)
(149, 1.007832697316202e+35)
(150, 4.629534547483161e+35)
(151, 2.1407793295424837e+36)
(152, 9.964903946452575e+36)
(153, 4.668981721808451e+37)
(154, 2.201914851540278e+38)
(155, 1.0451770020692072e+39)
(156, 4.993120757331423e+39)
(157, 2.400652768300542e+40)
(158, 1.1615664539023522e+41)
(159, 5.655862093351351e+41)
(160, 2.7712545572768928e+42)
(161, 1.3663435860989186e+43)
(162, 6.778483810472925e+43)
(163, 3.38359047447993e+44)
(164, 1.6993360387479228e+45)
(165, 8.586592211314479e+45)
(166, 4.36502315995216e+46)
(167, 2.2323415852344623e+47)
(168, 1.1484908973805473e+48)
(169, 5.943905391425807e+48)
(170, 3.0944141311517917e+49)
(171, 1.6204370790202374e+50)
(172, 8.535289286298636e+50)
(173, 4.521910626701504e+51)
(174, 2.4095106426218936e+52)
(175, 1.2912921881075254e+53)
(176, 6.959768877326835e+53)
(177, 3.772469264406981e+54)
(178, 2.0563798447241444e+55)
(179, 1.127233782026012e+56)
(180, 6.213611942787241e+56)
(181, 3.444136282209396e+57)
(182, 1.9195938151165438e+58)
(183, 1.0757666036849365e+59)
(184, 6.0616869928858295e+59)
(185, 3.434178472610848e+60)
(186, 1.9561107141249642e+61)
(187, 1.1201925099748443e+62)
(188, 6.449234073943851e+62)
(189, 3.7327385346140525e+63)
(190, 2.1718948586471422e+64)
(191, 1.2703687168854168e+65)
(192, 7.469450360670971e+65)
(193, 4.414724234416849e+66)
(194, 2.622786482030381e+67)
(195, 1.5662286865702341e+68)
(196, 9.400887847601316e+68)
(197, 5.671431929212057e+69)
(198, 3.438868272310889e+70)
(199, 2.0956861478742714e+71)
(200, 1.2835531728102393e+72)
};
\end{semilogyaxis}\end{tikzpicture}
\caption{Values of $|a_k|$. Simple dot (\raisebox{0.75mm}{\begin{tikzpicture}\fill[black]circle(0.5pt);\end{tikzpicture}}): $a_k a_{k+1}<0$. Dot with circle (\raisebox{0.5mm}{\begin{tikzpicture}\fill[black]circle(1.2pt);\fill[white]circle(0.9pt);\fill[black]circle(0.5pt);\end{tikzpicture}}): $a_k a_{k+1}>0$.\\
Gray line: rough approximations $|a_k|\approx k!/q^k$ with $q=\ln(53360^3)$.}
\label{fig_a200}
\end{figure}

  \section{Proof of Theorem \ref{theo02}}\label{secttheo2}
  \renewcommand{\leftmark}{Proof of Theorem \ref{theo02}}
  From $\frac{1}{\pi}-\frac{1}{\pi_n}=\frac{\pi_n-\pi}{\pi~\pi_n}$ we deduce using Definition \ref{defi1}:
\begin{align}
    \left|\pi_n-\pi\right|&=\left|\pi~\pi_n\right|\cdot\left|\frac{1}{\pi}-\frac{1}{\pi_n}\right|
    =\pi\,\pi_n\,\frac{12\cdot \numprint{545140134}}{\sqrt{640320^3}}\,\left|\sum_{k=n}^\infty (-1)^k s_k\right|\label{exakte}\\
    &=\frac{12\cdot\numprint{545140134}\,\pi^2}{\sqrt{640320^3}(1+\varepsilon)}\cdot\frac{\pi_n}{\pi}\cdot s_n\cdot \underbrace{\frac{1+\varepsilon}{s_n}\left|\sum_{k=n}^\infty (-1)^k s_k\right|}_{\text{Theorem \ref{theo01}}}.\label{eq9}
\end{align}
Most of the work towards proving Theorem \ref{theo02} consists in establishing the error bound in Theorem \ref{theo01}. We still have to find approximations for $s_n$ (Lemma \ref{lem2b}) and for $\frac{\pi_n}{\pi}$ (Lemma \ref{lemma-pin}).

\begin{lemma}\label{lem2b} For all $n\geq 1$ there is $h_n$ with $-0.0024 < h_n < -0.0005$ and
    $$s_{n}=\frac{\varepsilon^n}{2\,\pi^{3/2}\sqrt{n}}\cdot\exp\left(\frac{S-\frac{19}{72}}{n}+\frac{-\frac{1}{2}S^2}{n^2}+\frac{\frac{131}{15552}+\frac{1}{3}S^3}{n^3}+\frac{h_n}{n^4}\right).$$
\end{lemma}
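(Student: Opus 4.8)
The plan is to pass to logarithms and apply Stirling's series termwise. Writing out the definition of $s_n$,
\[
\ln s_n=\ln(6n)!-\ln(3n)!-3\ln(n!)-3n\ln 640320+\ln(n+S),
\]
I would substitute Stirling's series $\ln(m!)=(m+\tfrac12)\ln m-m+\tfrac12\ln(2\pi)+\sum_{k\ge1}\frac{B_{2k}}{2k(2k-1)\,m^{2k-1}}$ with $m=6n,3n,n$. The terms linear in $n$ cancel because $-6n+3n+3n=0$, and the $n\ln n$ contributions cancel as well, so the leading term is $n(6\ln 6-3\ln 3)=n\ln 1728$; together with $-3n\ln 640320$ this equals $n\ln\varepsilon$ and furnishes the factor $\varepsilon^n$, confirming that the convergence rate built into $\varepsilon$ is exactly the Stirling main term.

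For the subleading structure I would collect the half-integer logarithms, $\tfrac12\ln(6n)-\tfrac12\ln(3n)-\tfrac32\ln n=\tfrac12\ln 2-\tfrac32\ln n$, and the three constants $\tfrac12\ln(2\pi)$, which combine to $-\tfrac32\ln(2\pi)$. Splitting $\ln(n+S)=\ln n+\ln(1+S/n)$ turns $-\tfrac32\ln n+\ln n$ into $-\tfrac12\ln n$, while the constant part exponentiates to $\exp(\tfrac12\ln 2-\tfrac32\ln(2\pi))=\frac1{2\pi^{3/2}}$, giving the prefactor $\frac{\varepsilon^n}{2\pi^{3/2}\sqrt n}$. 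It remains to expand the exponent. Stirling produces only odd inverse powers of $n$: the $\frac1{12m}$ terms sum to $\big(\tfrac1{72}-\tfrac1{36}-\tfrac14\big)\tfrac1n=-\tfrac{19}{72n}$, there is no $1/n^2$ Stirling contribution, and the $-\frac1{360m^3}$ terms sum to $\tfrac{131}{15552}\tfrac1{n^3}$. Adding the logarithmic expansion $\ln(1+S/n)=\tfrac Sn-\tfrac{S^2}{2n^2}+\tfrac{S^3}{3n^3}-\tfrac{S^4}{4n^4}+\cdots$ reproduces the stated coefficients $\frac{S-19/72}{n}$, $\frac{-S^2/2}{n^2}$ and $\frac{131/15552+S^3/3}{n^3}$ precisely.

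The hard part is the two-sided bound on the remainder $\frac{h_n}{n^4}$. The main tool is the \emph{enveloping} property of Stirling's series: for a positive argument, truncating the series after any term leaves a remainder that has the same sign as, and is bounded in absolute value by, the first omitted term. Truncating each of $\ln(6n)!,\ln(3n)!,\ln(n!)$ after its $1/m^3$ term therefore sandwiches the three remainders $R_{6n},R_{3n},R_n$ between $0$ and $\frac1{1260(6n)^5},\frac1{1260(3n)^5},\frac1{1260\,n^5}$ respectively. In $\ln s_n$ they enter as $R_{6n}-R_{3n}-3R_n$, an $O(1/n^5)$ quantity dominated by the negative term $-3R_n$; combining this with the alternating-series bracketing of the logarithmic tail $-\frac{S^4}{4n^4}+\frac{S^5}{5n^5}-\cdots$ yields explicit upper and lower bounds for $h_n$.

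The obstacle I expect is uniformity in $n$: the enveloping bounds are crudest at small $n$, so the narrow target window for $h_n$ will force me either to sharpen the remainder estimate, for instance by carrying one more Stirling term and bracketing with two consecutive partial sums, or to verify the first few values $n=1,2,\dots$ by evaluating $\ln s_n$ directly against the explicit three-term exponent, letting the asymptotic bounds take over only once $n$ is large enough that they fall inside the required interval.
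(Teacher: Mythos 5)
Your main-term computation is exactly the paper's: both arguments apply Stirling's formula with Sasv\'ari's enveloping-series error bounds to $(6n)!$, $(3n)!$, $(n!)^3$, split $\ln(n+S)=\ln n+\ln(1+S/n)$, and obtain the prefactor $\frac{\varepsilon^n}{2\pi^{3/2}\sqrt n}$ and the coefficients $-\frac{19}{72}$, $\frac{131}{15552}$, $-\frac{S^2}{2}$, $\frac{S^3}{3}$ in the same way. The gap is in your remainder plan. If you truncate each $r_m$ after the $-\frac{1}{360m^3}$ term and keep only the one-sided bounds $0<R_m<\frac{1}{1260m^5}$, then the best upper bound you can form for $R_{6n}-R_{3n}-3R_n$ is $+\frac{1}{1260(6n)^5}>0$ (the terms $-R_{3n}-3R_n$ can only be bounded above by $0$), so your upper bound for $h_n$ comes out \emph{positive}; this cannot prove $h_n<-0.0005$, nor even $h_n<0$, for any $n$. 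Your phrase ``dominated by the negative term $-3R_n$'' tacitly uses a \emph{lower} bound on $R_n$, i.e.\ precisely the bracketing by two consecutive partial sums that you relegate to an optional sharpening. That step is not optional --- it is the heart of the paper's proof, which uses $\frac1{12m}-\frac1{360m^3}+\frac1{1260m^5}-\frac1{1680m^7}<r_m<\frac1{12m}-\frac1{360m^3}+\frac1{1260m^5}$ to sandwich $y_n=r_{6n}-r_{3n}-3r_n$ between $-\frac{19}{72n}+\frac{131}{15552n^3}-\frac{3337}{1399680n^5}$ plus explicit $1/n^7$ errors on both sides; the extracted negative term $-\frac{3337}{1399680n^5}$ is what drives $h_n$ below zero. (Your crude sandwich does, just barely, deliver the lower bound $-0.0024<h_n$ at $n=1$; it is the upper bound that is out of reach.)

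Your closing diagnosis of where the difficulty sits is also inverted, and this matters. Small $n$ is the easy regime: the window is met comfortably for $n=1,\dots,4$. The problem is at \emph{large} $n$: Stirling's series contributes no $1/n^4$ term, so $h_n\to-\frac{S^4}{4}\approx-9.7\cdot10^{-8}$, and consequently the claimed inequality $h_n<-0.0005$ fails for all large $n$ --- concretely already at $n=5$, where the enveloping bounds force $h_5>-\frac{3337}{1399680\cdot5}-\frac{1}{470292480\cdot125}-\frac{S^4}{4}\approx-0.000477>-0.0005$. Even the paper's own sandwich only yields $h_n<-\frac{3337}{1399680\,n}+\frac{3281}{1837080\,n^3}$, which exceeds $-0.0005$ from $n=5$ onward, so what the paper's argument actually establishes is $-0.0024<h_n<0$ for all $n\geq1$ (and $h_n<0$ is all that the later proof of Theorem \ref{theo02} uses). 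Hence your fallback --- verify small $n$ directly and let the asymptotic bounds take over for large $n$ --- cannot rescue the statement as written: the honest conclusions available are the two-sided bound $-0.0024<h_n<0$ for all $n\geq1$, or the stated window restricted to $n\leq4$.
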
 
\begin{proof}
We start with Stirling's formula
$n!=\sqrt{2\pi n}\cdot\left(\frac{n}{e}\right)^n\cdot e^{r_n}$
with the enveloping series $r_n=\sum_{j=1}^\infty \frac{B_{2j}}{2j(2j-1)n^{2j-1}}$ proved in \cite{sasvari} which tells us that:
$$\frac{1}{12n}-\frac{1}{360n^3}+\frac{1}{1260n^5}-\frac{1}{1680n^7}<r_n<\frac{1}{12n}-\frac{1}{360n^3}+\frac{1}{1260n^5}.$$
This yields $\frac{(6n)!}{(3n)!(n!)^3} = \frac{1728^n}{2(\pi n)^{3/2}}\cdot e^{ y_n}$
with the following bounds for $ y_n=r_{6n}-r_{3n}-3r_{n}$:
\begin{align*}
     y_n &<-\frac{19}{72n}+\frac{131}{15552n^3}-\frac{3337}{1399680n^5}+\frac{3281}{1837080n^7},\\
     y_n &>-\frac{19}{72n}+\frac{131}{15552n^3}-\frac{3337}{1399680n^5}-\frac{1}{470292480n^7}.
\end{align*}
To approximate $S+n=n\cdot\exp\ln(1+x)$ with $x=\frac{S}{n}>0$, we use
$$x-\frac{1}{2}x^2+\frac{1}{3}x^3-\frac{1}{4}x^4< \ln(1+x) < x-\frac{1}{2}x^2+\frac{1}{3}x^3.$$
Combining these two error bounds, we obtain
\begin{align*}
    s_n&=\frac{\left(6n\right)!}{\left(3n\right)!\left(n!\right)^3}\cdot\frac{S + n}{640320^{3n}}
    =\frac{1728^n}{2(\pi n)^{3/2}}\cdot e^{ y_n}\cdot \frac{ n}{640320^{3n}}\cdot\left(1+\frac{S}{n}\right)\\
    &=\frac{53360^{-3n}}{2\,\pi^{3/2}\sqrt{n}}\cdot\exp\left(-\frac{19}{72n}+\frac{131}{15552n^3}+\frac{S}{n}-\frac{\frac{1}{2}S^2}{n^2}+\frac{\frac{1}{3}S^3}{n^3}+\frac{h_n}{n^4}\right),
\end{align*}
where $\frac{h_n}{n^4}$ contains the terms of higher order
\begin{align*}
    -\frac{3337}{1399680n^5}-\frac{1}{470292480n^7}-\frac{\frac{1}{4}S^4}{n^4} < \frac{h_n}{n^4} &< -\frac{3337}{1399680n^5}+\frac{3281}{1837080n^7},
\end{align*}
which proves $-0.0024 < h_n < -0.0005$ and the lemma
\end{proof}


\begin{lemma}\label{lemma-pin}
For $n\geq 1$ we have the really weak estimate
$\pi_n=\pi\exp\left(\frac{f_n\varepsilon}{n^{10}}\right)$
with $-4< f_n < 4$.
\end{lemma}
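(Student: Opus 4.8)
The plan is to prove this ``really weak'' estimate by combining three crude ingredients: the alternating--series bound for the tail, the sharp Stirling estimate for $s_n$ from Lemma~\ref{lem2b}, and the elementary fact that $\pi_n$ is already extremely close to $\pi$. Write $C=\frac{12\cdot\numprint{545140134}}{\sqrt{\numprint{640320}^3}}$, so that $\frac1{\pi_n}=C\sum_{k=0}^{n-1}(-1)^k s_k$ and $\frac1\pi=C\sum_{k=0}^{\infty}(-1)^k s_k$ by (\ref{eqdefipin}). Subtracting and multiplying by $\pi_n$ gives
$$\frac{\pi_n}{\pi}-1=\pi_n\left(\frac1\pi-\frac1{\pi_n}\right)=\pi_n\,C\sum_{k=n}^\infty(-1)^k s_k=:x_n .$$
Since $\ln(\pi_n/\pi)=\ln(1+x_n)$ and I will show that $|x_n|$ is of order $\varepsilon\approx10^{-14}$, the estimate $|\ln(1+x_n)|\le|x_n|(1+|x_n|)$ shows that the gap between $\ln(1+x_n)$ and $x_n$ is completely negligible; hence it suffices to bound $|x_n|$ by a constant well below $4$ times $\varepsilon/n^{10}$.

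First I would bound the three factors of $|x_n|=\pi_n\,C\,\bigl|\sum_{k=n}^\infty(-1)^k s_k\bigr|$. Because $s_k$ decreases monotonically to $0$ (used already in the proof of Theorem~\ref{theo01}), every partial sum $1/\pi_n$, as well as the full sum $1/\pi$, lies between $C(s_0-s_1)$ and $Cs_0=CS$; a one--line computation of $CS\approx\numprint{0.3184}$ then gives the a~priori bound $\pi_n<\numprint{3.2}$ for all $n\ge1$ (in fact $\pi_n$ differs from $\pi$ by less than $10^{-14}$). The same monotonicity yields the alternating--series estimate $\bigl|\sum_{k=n}^\infty(-1)^k s_k\bigr|\le s_n$. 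Finally, Lemma~\ref{lem2b} expresses $s_n=\frac{\varepsilon^n}{2\pi^{3/2}\sqrt n}\exp(\cdots)$ with an exponent that is negative for every $n\ge1$ (its leading term is $(S-\frac{19}{72})/n<0$), so $s_n<\frac{\varepsilon^n}{2\pi^{3/2}\sqrt n}$. Multiplying the three bounds gives $|x_n|\le M\,\varepsilon^n/\sqrt n$ with the explicit constant $M=\numprint{3.2}\cdot C/(2\pi^{3/2})\approx\numprint{3.67}$.

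To reach the stated shape I would absorb the exponential smallness into $n^{-10}$. Writing $\frac{\varepsilon^n}{\sqrt n}=\frac{\varepsilon}{n^{10}}\cdot\varepsilon^{n-1}n^{19/2}$, the factor $g(n):=\varepsilon^{n-1}n^{19/2}$ has $\frac{d}{dn}\ln g(n)=\ln\varepsilon+\frac{19}{2n}<0$ for all $n\ge1$, since $\ln\varepsilon=-3\ln\numprint{53360}\approx-\numprint{32.65}$ dwarfs $\frac{19}{2n}\le\frac{19}{2}$. Hence $g$ decreases and $g(n)\le g(1)=1$, so that $|x_n|\le M\,\varepsilon/n^{10}$. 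Feeding this through the logarithm, $|\ln(\pi_n/\pi)|\le M(1+M\varepsilon)\,\varepsilon/n^{10}$, and since $M(1+M\varepsilon)<4$ this yields $-4<f_n<4$, proving the lemma.

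The only case that actually tests the constant is $n=1$, where $g(1)=1$ and no room is gained from the exponential decay; there the binding inequality is $|x_1|\le M\varepsilon$ with $M\approx\numprint{3.67}<4$. A direct evaluation, using $s_1=\frac{720}{6}\cdot\frac{1+S}{\numprint{640320}^3}$ and $\pi_1\approx\pi$, in fact gives the sharper value $|x_1|\approx\numprint{2.86}\,\varepsilon$, which is why the symmetric factor $4$ carries comfortable slack; for every $n\ge2$ the factor $\varepsilon^{n-1}\le\varepsilon\approx10^{-14}$ makes $|x_n|$ vanishingly small and the conclusion immediate. The main (modest) obstacle is thus purely bookkeeping: assembling the numerical constant $M$ so that one is certain it stays below $4$ after the crude bounds on $\pi_n$ and on the tail.
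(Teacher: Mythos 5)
Your proposal is correct and follows essentially the same route as the paper: bound the tail by $s_n$ via the alternating-series estimate, bound $s_n$ by $\varepsilon^n/(2\pi^{3/2}\sqrt{n})$ using Lemma~\ref{lem2b}, use $\pi_n<3.2$, and absorb the factor $\varepsilon^{n-1}$ into $n^{-10}$. In fact you supply slightly more detail than the paper (justifying $\pi_n<3.2$ from the alternating partial sums, proving $\varepsilon^{n-1}n^{19/2}\leq 1$ by monotonicity, and making the passage from the linear bound on $|\pi_n/\pi-1|$ to the exponential form explicit), all of which is sound.
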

\begin{proof}
Since $s_n$ decreases monotonically to zero, (\ref{exakte}) yields
\begin{align*}
    \left|\pi_n-\pi\right|
    &<\pi\,\pi_n\,\frac{12\cdot \numprint{545140134}}{\sqrt{640320^3}}\,s_n.
\end{align*}
Using $s_n<\frac{\varepsilon^n}{2\,\pi^{3/2}}$ (see Lemma \ref{lem2b}) and $\pi_n<3.2$ yields
\begin{align*}
    \frac{\left|\pi_n-\pi\right|}{\pi} < \frac{12\cdot \numprint{545140134}}{\sqrt{640320^3}}\,\pi_n\,s_n< 3.7\varepsilon^n\leq\frac{3.7\varepsilon}{n^{10}},
\end{align*}
and thus
$\pi\left(1-\frac{3.7\varepsilon}{n^{10}}\right)<\pi_n<\pi\left(1+\frac{3.7\varepsilon}{n^{10}}\right)$,
which proves the lemma.
\end{proof}

 \begin{lemma}\label{lem7}
For $n\geq 1$ there is $d_n$ with $0.321<d_n < 0.671$ and
\begin{align*}
    \left|\sum_{k=n}^\infty (-1)^k s_k\right|&=\frac{s_n}{1+\varepsilon}\cdot\exp\left(\frac{a_1}{n}+ \frac{a_2-\frac{1}{2}a_1^2}{n^2}+\frac{d_n\varepsilon}{n^3}\right),
\end{align*}
where $a_1$ and $a_2$ are the coefficients from Theorem \ref{theo01}.
\end{lemma}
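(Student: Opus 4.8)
The plan is to take the \emph{additive} expansion from Theorem~\ref{theo01},
$$\left|\sum_{k=n}^\infty (-1)^k s_k\right|=\frac{s_n}{1+\varepsilon}\left(1+x_n\right),\qquad x_n:=\frac{a_1}{n}+\frac{a_2}{n^2}+\frac{e_n\varepsilon}{n^3},$$
with $0.3216<e_n<0.6704$, and to rewrite the factor $1+x_n$ as $\exp\left(\ln(1+x_n)\right)$. The whole content of the lemma is then the reorganisation of $\ln(1+x_n)$ into the claimed exponent, together with a two-sided bound on the resulting remainder $d_n$.

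First I would record that $x_n$ is tiny: since $a_1=\frac{\varepsilon}{2(1+\varepsilon)}$ and $a_2$ are both of order $\varepsilon$ and $e_n$ is bounded, one has $|x_n|\leq|a_1|+|a_2|+0.6704\,\varepsilon<1.2\cdot10^{-14}$ for every $n\geq1$. In particular $|x_n|<1$, so $\ln(1+x_n)=x_n-\tfrac12 x_n^2+\tfrac13 x_n^3-\cdots$ converges and $\left|\ln(1+x_n)-x_n+\tfrac12 x_n^2\right|\leq|x_n|^3$.

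Next I would expand and sort by powers of $\tfrac1n$. The only contribution at order $\tfrac1n$ is $a_1$ (from $x_n$); at order $\tfrac1{n^2}$ the contributions are $a_2$ (from $x_n$) and $-\tfrac12 a_1^2$ (from $-\tfrac12 x_n^2$), giving exactly $a_2-\tfrac12 a_1^2$ as in the stated exponent. I then \emph{define}
$$\frac{d_n\varepsilon}{n^3}:=\ln(1+x_n)-\frac{a_1}{n}-\frac{a_2-\tfrac12 a_1^2}{n^2},$$
so that the claimed identity holds by construction, and the only real work is the bound on $d_n$.

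For that bound, the leading part of $d_n\varepsilon$ is the term $e_n\varepsilon$ coming from the $\tfrac1{n^3}$-part of $x_n$. Every other contribution --- the cross term $-a_1a_2$ and the piece $\tfrac13 a_1^3$ at order $\tfrac1{n^3}$, together with all terms of order $\tfrac1{n^4}$ and beyond arising from $x_n^2,x_n^3,\dots$ --- is a product of at least two of the $O(\varepsilon)$ quantities $a_1,a_2,e_n\varepsilon$ and hence carries a factor $\varepsilon^2$. Dividing by $\varepsilon$ turns each such correction into an $O(\varepsilon)$ quantity, so $d_n=e_n+O(\varepsilon)$. The only (mild) obstacle is making this uniform in $n$: since $\tfrac1{n^4}\leq\tfrac1{n^3}$ for $n\geq1$, the higher-order tails fold cleanly into the $\tfrac1{n^3}$-slot, and in the worst case $n=1$ (where $\tfrac1{n^k}=1$) the crude bound $|x_n|<1.2\cdot10^{-14}$ keeps the total correction far below the slack $0.0006$ between Theorem~\ref{theo01}'s bounds $0.3216<e_n<0.6704$ and the looser target $0.321<d_n<0.671$. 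Hence the claimed two-sided bound on $d_n$ holds for all $n\geq1$.
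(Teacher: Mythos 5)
Your proposal is correct and follows essentially the same route as the paper: both start from the additive form $1+x_n$ given by Theorem~\ref{theo01}, take $\ln(1+x_n)$, identify the coefficients $a_1/n$ and $(a_2-\tfrac12 a_1^2)/n^2$, and absorb all remaining contributions (each carrying a factor $\varepsilon^2$) into the $n^{-3}$ slot using $|a_1|,|a_2|,\varepsilon<10^{-14}$, which fits comfortably inside the slack between $(0.3216,0.6704)$ and $(0.321,0.671)$. The only cosmetic difference is that the paper uses the one-sided inequalities $x-\tfrac12 x^2<\ln(1+x)<x-\tfrac12 x^2+\tfrac13 x^3$ (valid since $x_n>0$), whereas you use a two-sided Taylor-remainder bound, which avoids having to check the sign of $x_n$.
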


\begin{proof}
Theorem \ref{theo01} implies that
$\left|\sum_{k=n}^\infty (-1)^k s_k\right|=\frac{s_n}{1+\varepsilon}\cdot\left(1 + x_n\right)$
with $x_n = \frac{a_1}{n}+\frac{a_2}{n^2}+\frac{e_n\varepsilon}{n^3}$.
Since $x_n>0$ for all $n\geq 1$, we can use
$x-\frac{1}{2}x^2<\ln(1+x)<x-\frac{1}{2}x^2+\frac{1}{3}x^3$.

Here, $e_n<0.6704$ yields the upper bound on $d_n$:
\begin{align*}
    \ln(1+x_n) &< \frac{a_1}{n}+\frac{a_2}{n^2}+\frac{e_n\varepsilon}{n^3}-\frac{1}{2}\left(\frac{a_1}{n}+\frac{a_2}{n^2}+\frac{e_n\varepsilon}{n^3}\right)^2 +\frac{1}{3}\left(\frac{a_1}{n}+\frac{a_2}{n^2}+\frac{e_n\varepsilon}{n^3}\right)^3\\
    &< \frac{a_1}{n}+\frac{a_2-\frac{1}{2}a_1^2}{n^2}+\frac{0.671\varepsilon}{n^3}.
\end{align*}
In the last step, we used that $|a_1|$, $|a_2|$ and $\varepsilon$ are smaller than $10^{-14}$.

For the lower bound on $d_n$, we use $e_n>0.3216$:
\begin{align*}
    \ln(1+x_n) &> \frac{a_1}{n}+\frac{a_2}{n^2}+\frac{e_n\varepsilon}{n^3}-\frac{1}{2}\left(\frac{a_1}{n}+\frac{a_2}{n^2}+\frac{e_n\varepsilon}{n^3}\right)^2\\
    &> \frac{a_1}{n}+\frac{a_2-\frac{1}{2}a_1^2}{n^2}+\frac{0.321\varepsilon}{n^3}.
\end{align*}
This proves $0.321<d_n<0.671$, which finishes the proof of Lemma \ref{lem7}.
\end{proof}

\noindent\textbf{Proof of Theorem \ref{theo02}:}
We use Lemma \ref{lem2b}, Lemma \ref{lemma-pin} and Lemma \ref{lem7} in equation (\ref{eq9}) and obtain
\begin{align*}
    \left|\pi_n-\pi\right|&=\frac{12\cdot\numprint{545140134}\,\pi^2}{\sqrt{640320^3}(1+\varepsilon)}\cdot\frac{\pi_n}{\pi}\cdot s_n\cdot \frac{1+\varepsilon}{s_n}\left|\sum_{k=n}^\infty (-1)^k s_k\right|\\
    &=\frac{12\cdot\numprint{545140134}\pi^2}{\sqrt{640320^3}(1+\varepsilon)}\cdot\frac{\varepsilon^n}{2\pi^{3/2}\sqrt{n}}\cdot\exp(R_n)\,.
\end{align*}
The terms in the exponent $R_n$ of this expression are
\begin{align*}
    R_n &=  \frac{a_1}{n}+ \frac{a_2-\frac{1}{2}a_1^2}{n^2}+\frac{d_n\varepsilon}{n^3} +\frac{S-\frac{19}{72}}{n}+\frac{-\frac{1}{2}S^2}{n^2}+\frac{\frac{131}{15552}+\frac{1}{3}S^3}{n^3}+\frac{h_n}{n^4}+ \frac{f_n\varepsilon}{n^{10}}\,.
\end{align*}
Here we sort the terms as stated in Theorem \ref{theo02}:
    \begin{align*}
    A_0 &=\frac{6\sqrt{\pi}\cdot \numprint{545140134}}{\sqrt{\numprint{640320}^3}(1+\varepsilon)} = \frac{\numprint{106720}\cdot\sqrt{\numprint{10005}\pi}}{\numprint{1672209}},\\
    A_1 &= a_1 + S-\frac{19}{72}  = -\nfrac{1781843197433}{7456754505816},\\
    A_2 &= a_2-\frac{1}{2}a_1^2 -\frac{1}{2}S^2  = -\nfrac{1080096011925710088395}{3475199235000451148614116},
    \end{align*}
and the error term
\begin{align*}
    \frac{\delta_n}{n^3} &= \frac{d_n\varepsilon}{n^3} +\frac{\nfrac{131}{15552}+\frac{1}{3}S^3}{n^3}+\frac{h_n}{n^4}+\frac{f_n\varepsilon}{n^{10}}. 
\end{align*}
This proves $0.007<\delta_n<\numprint{0.008429}$ for $n\geq 2$. Using
$\delta_1 = \ln\left(\left|\pi_1-\pi\right|\cdot\frac{53360^{3}}{A_0}\right)-A_1-A_2$
and $\pi_1 = \frac{\sqrt{\numprint{640320}^3}}{12\cdot \numprint{13591409}}$ yields $\numprint{0.006907}<\delta_1< \numprint{0.007}$.
\qed

  \section{Numerical verification and further results}\label{secconcl}
  \renewcommand{\leftmark}{Numerical verification and further results}
  For a numerical analysis, we computed the partial sums $\sum_{k=0}^{n-1}(-1)^k s_k$.
Using these, we calculated the numerical values of $e_n$ from Theorem~\ref{theo01} and of $\delta_n$ from Theorem~\ref{theo02}.
Figure~\ref{fig_both} shows
the results and that the bounds we have proven are close to optimal.

\begin{figure}[!ht]
\centering
\begin{tikzpicture}
\begin{axis}[xtick={1,20,40,60,80,100}, ymin=0.3,ymax=0.7,width=7cm]
\draw[dashed] (-20,0.6704)--(120,0.6704);
\draw[dashed] (-20,0.3216)--(120,0.3216);
\addplot[solid]  coordinates { 
(1, 0.321629129275556)
(2, 0.434920661979421)
(3, 0.492676952753371)
(4, 0.527682535502951)
(5, 0.551165354227805)
(6, 0.568009931440682)
(7, 0.580682167266523)
(8, 0.59056130147437)
(9, 0.59847904668123)
(10, 0.604966736404485)
(11, 0.610379693280822)
(12, 0.614964533901292)
(13, 0.618897784958226)
(14, 0.622309130553307)
(15, 0.625295982630069)
(16, 0.627932931493378)
(17, 0.630278059179629)
(18, 0.632377266915621)
(19, 0.634267309142313)
(20, 0.635977963721413)
(21, 0.637533612248908)
(22, 0.638954409430527)
(23, 0.640257161007248)
(24, 0.64145599159622)
(25, 0.642562858848264)
(26, 0.643587953657384)
(27, 0.644540014834648)
(28, 0.645426578840807)
(29, 0.646254179693655)
(30, 0.647028510274465)
(31, 0.647754553458184)
(32, 0.648436689454265)
(33, 0.649078784245438)
(34, 0.649684262896973)
(35, 0.650256170672339)
(36, 0.650797224257619)
(37, 0.651309854913269)
(38, 0.651796244999469)
(39, 0.652258359032573)
(40, 0.652697970204728)
(41, 0.653116683121494)
(42, 0.653515953372106)
(43, 0.653897104435433)
(44, 0.654261342335436)
(45, 0.654609768388036)
(46, 0.654943390323215)
(47, 0.655263132018956)
(48, 0.655569842045006)
(49, 0.655864301182885)
(50, 0.656147229062421)
(51, 0.656419290033598)
(52, 0.656681098374572)
(53, 0.656933222921784)
(54, 0.657176191195637)
(55, 0.657410493084692)
(56, 0.657636584142561)
(57, 0.657854888544147)
(58, 0.658065801741628)
(59, 0.658269692855157)
(60, 0.658466906828684)
(61, 0.658657766377395)
(62, 0.658842573749899)
(63, 0.65902161232538)
(64, 0.659195148063497)
(65, 0.65936343082261)
(66, 0.659526695560083)
(67, 0.659685163426797)
(68, 0.659839042766575)
(69, 0.659988530030038)
(70, 0.660133810611312)
(71, 0.660275059615074)
(72, 0.660412442560618)
(73, 0.660546116028893)
(74, 0.660676228257817)
(75, 0.660802919690652)
(76, 0.660926323481672)
(77, 0.661046565962985)
(78, 0.661163767075919)
(79, 0.66127804077009)
(80, 0.661389495372937)
(81, 0.661498233932234)
(82, 0.661604354533872)
(83, 0.661707950596955)
(84, 0.661809111148087)
(85, 0.661907921076539)
(86, 0.66200446137183)
(87, 0.662098809345123)
(88, 0.662191038835708)
(89, 0.662281220403735)
(90, 0.662369421510238)
(91, 0.662455706685443)
(92, 0.662540137686207)
(93, 0.662622773643436)
(94, 0.66270367120018)
(95, 0.662782884641117)
(96, 0.662860466014032)
(97, 0.662936465243853)
(98, 0.663010930239795)
(99, 0.663083906996071)
(100, 0.663155439686628) }; 
\end{axis}
\end{tikzpicture}\hfill\begin{tikzpicture}
\begin{axis}[xtick={1,20,40,60,80,100},ymin=0.00681275,ymax=0.00855816,width=7cm] 
\draw[dashed] (-20,0.008429)--(120,0.00843);
\draw[dashed] (-20,0.006907)--(120,0.00690);
\addplot[solid]  coordinates { 
(1, 0.0069077628516136125)
(2, 0.007917660997407405)
(3, 0.00818282574791953)
(4, 0.008285921839538198)
(5, 0.008335844940815865)
(6, 0.008363605419479295)
(7, 0.008380573954526472)
(8, 0.008391682691322963)
(9, 0.008399343174097679)
(10, 0.008404845154614723)
(11, 0.008408928209030438)
(12, 0.008412040726901113)
(13, 0.008414467229369157)
(14, 0.008416395240055956)
(15, 0.008417952385183886)
(16, 0.008419227948368822)
(17, 0.008420285896695337)
(18, 0.00842117302509141)
(19, 0.008421924201944788)
(20, 0.008422565837182063)
(21, 0.008423118228959997)
(22, 0.008423597184817366)
(23, 0.008424015162665312)
(24, 0.008424382087468279)
(25, 0.008424705944803346)
(26, 0.008424993218316607)
(27, 0.008425249216277502)
(28, 0.008425478318227651)
(29, 0.008425684163306926)
(30, 0.008425869795497668)
(31, 0.008426037776690463)
(32, 0.008426190275466596)
(33, 0.008426329137378515)
(34, 0.008426455941006333)
(35, 0.008426572042987116)
(36, 0.008426678614427736)
(37, 0.008426776670535041)
(38, 0.008426867094869527)
(39, 0.00842695065930906)
(40, 0.008427028040568317)
(41, 0.008427099833936628)
(42, 0.008427166564756795)
(43, 0.008427228698059655)
(44, 0.008427286646685359)
(45, 0.008427340778157004)
(46, 0.008427391420520942)
(47, 0.008427438867327544)
(48, 0.008427483381893995)
(49, 0.008427525200965005)
(50, 0.00842756453786667)
(51, 0.008427601585232147)
(52, 0.008427636517364303)
(53, 0.008427669492289524)
(54, 0.008427700653548035)
(55, 0.008427730131758605)
(56, 0.008427758045989593)
(57, 0.008427784504963221)
(58, 0.008427809608115884)
(59, 0.00842783344653382)
(60, 0.008427856103780605)
(61, 0.008427877656630547)
(62, 0.008427898175719994)
(63, 0.0084279177261269)
(64, 0.008427936367887549)
(65, 0.008427954156458074)
(66, 0.008427971143127455)
(67, 0.008427987375387696)
(68, 0.008428002897266208)
(69, 0.008428017749624737)
(70, 0.008428031970428618)
(71, 0.008428045594989707)
(72, 0.008428058656185846)
(73, 0.008428071184659457)
(74, 0.008428083208997477)
(75, 0.008428094755894625)
(76, 0.008428105850301739)
(77, 0.008428116515560706)
(78, 0.0084281267735274)
(79, 0.008428136644683754)
(80, 0.008428146148240138)
(81, 0.008428155302228922)
(82, 0.0084281641235901)
(83, 0.008428172628249762)
(84, 0.008428180831192048)
(85, 0.008428188746525216)
(86, 0.008428196387542384)
(87, 0.00842820376677739)
(88, 0.008428210896056254)
(89, 0.008428217786544604)
(90, 0.008428224448791443)
(91, 0.008428230892769574)
(92, 0.008428237127912947)
(93, 0.008428243163151236)
(94, 0.00842824900694184)
(95, 0.00842825466729954)
(96, 0.008428260151824013)
(97, 0.008428265467725368)
(98, 0.008428270621847861)
(99, 0.008428275620691957)
(100, 0.008428280470434838) }; 
\end{axis}
\end{tikzpicture}
\caption{Numerically computed values of $e_n$ (left) and $\delta_n$ (right) along with the proven bounds (dashed lines).}
\label{fig_both}
\end{figure}
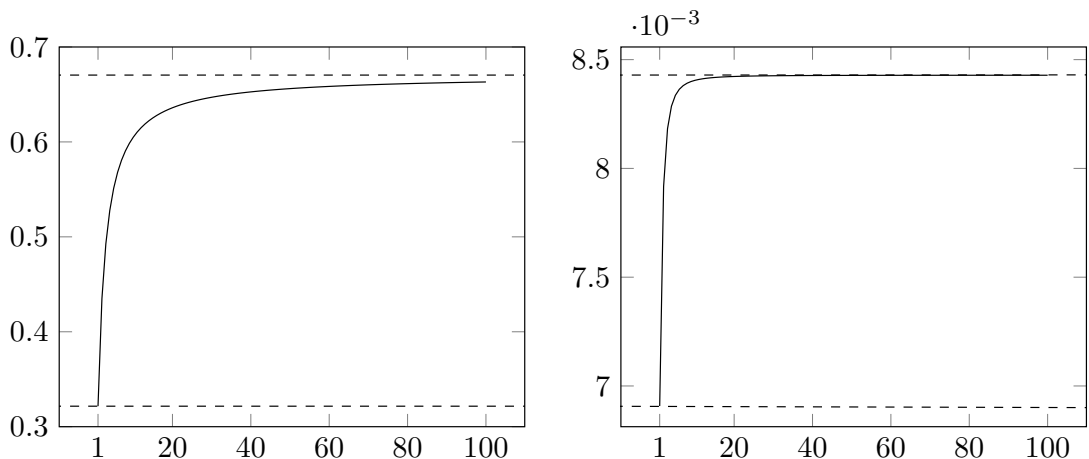

In \cite{cohenGuillera}, H. Cohen and J. Guillera list all 36 known rational hypergeometric series for $1/\pi$. They can be written with Pochhammer symbols $(a)_n={\Gamma(a+n)}/{\Gamma(n)}$ and $R\in\left\{1/2;1/3;1/4;1/6\right\}$:
\begin{align}
    \frac{1}{\pi}&= P\sum_{n=0}^\infty\frac{\left(\frac{1}{2}\right)_n \left(R\right)_n \left(1-R\right)_n}{n!^3}(n+S)Z^n.\label{typersz}
\end{align}

To generalize our results to formulae of this type, we can use the asymptotic estimate $\frac{(R)_n}{n!}\approx\frac{n^{R-1}}{\Gamma(R)}$ and Euler's reflection formula $\Gamma(R)\Gamma(1-R)={\pi}/{\sin(\pi R)}$ to obtain
\begin{align}\frac{\left(\frac{1}{2}\right)_n \left(R\right)_n \left(1-R\right)_n}{n!^3}\approx\frac{\sin(\pi R)}{(\pi n)^{3/2}}.\label{pochh}\end{align}

In Appendix \ref{appmorti}, we can use the method presented by C.~Mortici in \cite{morticiAMM} to prove a generalized version of Lemma \ref{lem2b}: For $n\geq 1$ and $0<R<1$ we have that
\begin{align}\exp\left(\frac{\sigma_1}{n}+\frac{\sigma_3}{n^3}+\frac{\sigma_5}{n^5}\right)
< \frac{\left(\frac{1}{2}\right)_n \left(R\right)_n \left(1-R\right)_n}{n!^3}\cdot\frac{(\pi n)^{3/2}}{\sin(\pi R)}
< \exp\left(\frac{\sigma_1}{n}+\frac{\sigma_3}{n^3}\right),\label{pochhamm}\end{align}
with $\sigma_1 = -\frac{1}{8}-r$ and $\sigma_3 = \frac{1}{192}+\frac{1}{6}r^2$ and $\sigma_5=\frac{-1}{640}-\frac{1}{30}r^2-\frac{1}{15}r^3$ and $r=R(1-R)$.
Here, $R=1/6$ yields $\sigma_1 = -19/72$, $\sigma_3 = 131/15552$ and $\sigma_5=-3337/1399680$.

Truncating one of the 36 rational hypergeometric series for $1/\pi$ of the form (\ref{typersz}) results in approximations $\widehat \pi_n$ of $\pi$. Using (\ref{pochhamm}) instead of Lemma \ref{lem2b} gives the following estimate of the truncation error:
\begin{align}
    \left|\widehat\pi_n-\pi\right| \approx \frac{P\sin(\pi R)\sqrt{\pi}}{(1-Z)\sqrt{n}}\cdot |Z|^n.
\end{align}
For asymptotic expansions of these 36 series see Appendix \ref{AppSeries}.

We conclude our paper with another example -- Ramanujan's series (\ref{rama396}) from the introduction:
For this series, we obtain the asymptotic expansion
\begin{align}
    \sum_{k=n}^\infty \tilde s_k=\frac{\tilde s_n}{1-99^{-4}}\cdot\left(1-\frac{1}{\numprint{192119200}\, n}+\frac{\numprint{248104223}}{\numprint{36909787008640000}\, n^2}-\frac{\tilde e_n}{n^3}\right),
\end{align}
with $3.558\cdot 10^{-9}<\tilde e_n<7.463\cdot 10^{-9}$.
Thus Ramanujan's famous series (\ref{rama396}) results in approximations $\tilde{\pi}_n$ of $\pi$ with an asymptotic expansion
\begin{align}
\left|\tilde \pi_n-\pi\right|=99^{-4n}\,\frac{9801\sqrt{\pi}}{1820\sqrt{n}}
\,\exp\mathopen{}\left(-\frac{\numprint{1793359}}{\numprint{6624800}\,n}-\frac{\numprint{15333610991}}{\numprint{17555190016000}\,n^2}+\frac{\tilde\delta_n}{n^3}\right)\mathclose{},    
\end{align}
where $0.0090<\tilde\delta_n<0.0111$.
In \cite[Theorem 3]{mortici}, C.~Mortici has proven similar bounds for another of Ramanujan's series.

Finally, we observe that the exponent $A_1/n+A_2/n^2+\delta_n/n^3$ in Theorem~\ref{theo02} is negative, which implies the weaker estimate for the truncation error in the Chudnovskys' series (\ref{chudseries})
\begin{align}
|\pi_n-\pi|<\frac{\numprint{106720}\sqrt{\numprint{10005}\pi}}{\numprint{1672209}\sqrt{n}}\cdot53360^{-3n}<\frac{11.315}{\sqrt{n}}\cdot53360^{-3n},    
\end{align}
which is valid for all $n\geq 1$. This much weaker estimate is sufficient to determine the number of terms needed to compute a given number of digits of $\pi$ -- even when computing trillions of digits.
But the methods described in this paper demonstrate how to establish precise error bounds for the approximation of $\pi$ through Ramanujan-like series for $1/\pi$
-- and they can be applied to various other generalized hypergeometric series.

  \appendix
  
  \section{Asymptotic expansion of a Pochhammer product}\label{appmorti}
  \renewcommand{\leftmark}{Asymptotic expansion of a Pochhammer product}
  In this appendix, we prove (\ref{pochhamm}) using the method presented by C.~Mortici in \cite{morticiAMM}.
\begin{lemma}\label{lemsti}
For all $R$ with $0<R<1$ it holds 
$$\frac{\left(\frac{1}{2}\right)_n \left(R\right)_n \left(1-R\right)_n}{n!^3}
=\frac{\sin(\pi R)}{(\pi n)^{3/2}}\left(1+\mathcal{O}\left(\frac{1}{n}\right)\right).$$
\end{lemma}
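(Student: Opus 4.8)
The plan is to reduce the product of Pochhammer symbols to a product of three Gamma-function ratios and then apply the standard Stirling asymptotics together with Euler's reflection formula. Writing $(a)_n = \Gamma(a+n)/\Gamma(a)$ and $n! = \Gamma(n+1)$, we have
$$\frac{\left(\tfrac12\right)_n (R)_n (1-R)_n}{n!^3} = \frac{1}{\Gamma\left(\tfrac12\right)\Gamma(R)\Gamma(1-R)} \cdot \frac{\Gamma\left(n+\tfrac12\right)}{\Gamma(n+1)}\cdot\frac{\Gamma(n+R)}{\Gamma(n+1)}\cdot\frac{\Gamma(n+1-R)}{\Gamma(n+1)}.$$
First I would record the constant prefactor: since $\Gamma\left(\tfrac12\right) = \sqrt{\pi}$ and, by Euler's reflection formula, $\Gamma(R)\Gamma(1-R) = \pi/\sin(\pi R)$, this prefactor equals $\sin(\pi R)/\pi^{3/2}$, which already supplies the factor $\sin(\pi R)$ and part of the $\pi^{3/2}$ appearing in the claim.

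The core analytic input is the ratio asymptotic $\Gamma(n+a)/\Gamma(n+1) = n^{a-1}\left(1 + \mathcal{O}\left(\tfrac1n\right)\right)$, valid for fixed $a$ as $n\to\infty$; this follows at once from the Stirling (Binet) formula already used in the proof of Lemma~\ref{lem2b}, applied to numerator and denominator with the remainders $r_n = \mathcal{O}\left(\tfrac1n\right)$ absorbed into the error. Applying it with $a = \tfrac12,\,R,\,1-R$ and multiplying the three ratios, the powers of $n$ combine to $n^{(1/2-1)+(R-1)+((1-R)-1)} = n^{-3/2}$, while the three individual $\left(1+\mathcal{O}\left(\tfrac1n\right)\right)$ factors collapse into a single $\left(1+\mathcal{O}\left(\tfrac1n\right)\right)$. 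Collecting the constant prefactor with $n^{-3/2}$ then yields exactly $\frac{\sin(\pi R)}{(\pi n)^{3/2}}\left(1+\mathcal{O}\left(\tfrac1n\right)\right)$, as asserted.

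The step requiring the most care is the error bookkeeping in the ratio asymptotic: one must check that the implied constant in $\mathcal{O}\left(\tfrac1n\right)$ is uniform over the finitely many exponents $\{\tfrac12, R, 1-R\}\subset(0,1)$, so that the product of the three error factors is again $\mathcal{O}\left(\tfrac1n\right)$ rather than merely $o(1)$. For the crude statement of Lemma~\ref{lemsti} this is routine, as only the leading term of the Binet remainder is needed. The genuinely delicate work lies in the \emph{sharp} expansion~(\ref{pochhamm}), where the explicit coefficients $\sigma_1,\sigma_3,\sigma_5$ must be pinned down; for that refinement I would abandon the blunt Stirling estimate and instead follow C.~Mortici's asymptotic-integration method from \cite{morticiAMM}.
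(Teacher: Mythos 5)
Your proof is correct and takes essentially the same route as the paper's: both split the Pochhammer product into the constant $1/\bigl(\Gamma(\tfrac12)\Gamma(R)\Gamma(1-R)\bigr)$, evaluated via $\Gamma(\tfrac12)=\sqrt{\pi}$ and Euler's reflection formula, times the ratio $\Gamma(n+\tfrac12)\Gamma(n+R)\Gamma(n+1-R)/\Gamma(n+1)^3$, which Stirling's formula shows to be $n^{-3/2}\bigl(1+\mathcal{O}(\tfrac1n)\bigr)$. If anything, your explicit bookkeeping of the $\mathcal{O}(\tfrac1n)$ error terms is slightly more careful than the paper's informal use of ``$\approx$''.
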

\begin{proof}The definition $(a)_n=\Gamma(n+a)/\Gamma(a)$ yields
\begin{align*}
\frac{\left(\frac{1}{2}\right)_n \left(R\right)_n \left(1-R\right)_n}{n!^3}
  &= \underbrace{\frac{\Gamma(n+\frac{1}{2})\Gamma(n+R)\Gamma(n+1-R)}{\Gamma(n+1)^3}}_{=A}
  \cdot
  \underbrace{\frac{1}{\Gamma(\frac{1}{2})\Gamma(R)\Gamma(1-R)}}_{= B}.
\end{align*}
From Stirling's formula $\Gamma(n+1)=n!\approx\sqrt{2\pi n}\left(\frac{n}{e}\right)^n$ we obtain 
\begin{align*}
\frac{\Gamma(n+1+x)}{n^x\Gamma(n+1)} 
 &\approx \frac{\sqrt{2\pi (n+x)}\left(\frac{n+x}{e}\right)^{n+x}}{n^x\sqrt{2\pi n}\left(\frac{n}{e}\right)^n}
  =       \left(1+\frac{x}{n}\right)^n e^{-x}\left(1+\frac{x}{n}\right)^{x+\frac{1}{2}}
  \approx 1
\end{align*}
which proves $A\approx n^{-3/2}$. Using the reflection formula $\Gamma(R)\Gamma(1-R)=\frac{\pi}{\sin(\pi R)}$ of the $\Gamma$ function yields $\Gamma(\frac{1}{2})=\sqrt{\pi}$ and $B=\frac{\sin(\pi R)}{\pi^{3/2}}$ which proves the Lemma.
\end{proof}

\begin{lemma} For all $n\geq 1$ and all $0<R<1$ it holds
$$\exp\left(\frac{\sigma_1}{n}+\frac{\sigma_3}{n^3}+\frac{\sigma_5}{n^5}\right)
< \frac{\left(\frac{1}{2}\right)_n \left(R\right)_n \left(1-R\right)_n}{n!^3}\cdot\frac{(\pi n)^{3/2}}{\sin(\pi R)}
< \exp\left(\frac{\sigma_1}{n}+\frac{\sigma_3}{n^3}\right)$$
with $\sigma_1 = -\frac{1}{8}-r$ and $\sigma_3 = \frac{1}{192}+\frac{1}{6}r^2$ and $\sigma_5=\frac{-1}{640}-\frac{1}{30}r^2-\frac{1}{15}r^3$ and $r=R(1-R)$.
\end{lemma}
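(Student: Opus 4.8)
Set $w_n=\frac{(\frac12)_n(R)_n(1-R)_n}{n!^3}\cdot\frac{(\pi n)^{3/2}}{\sin(\pi R)}$ and $u_n=\ln w_n$. By Lemma~\ref{lemsti} we have $w_n\to1$, hence $u_n\to0$. Following Mortici \cite{morticiAMM}, I would not touch the asymptotics of $\Gamma$ at all, but work only with the \emph{exact} first difference. Using $(a)_{n+1}=(a)_n(a+n)$ and $(n+1)!=(n+1)\,n!$, all three Pochhammer ratios together with the $n^{3/2}$ factor telescope, and since $(1+\frac Rn)(1+\frac{1-R}n)=1+\frac1n+\frac r{n^2}$ with $r=R(1-R)$, the difference depends on $R$ only through $r$:
\[
 u_n-u_{n+1}=\tfrac32\ln\Bigl(1+\tfrac1n\Bigr)-\ln\Bigl(1+\tfrac1{2n}\Bigr)-\ln\Bigl(1+\tfrac1n+\tfrac r{n^2}\Bigr).
\]
The decisive step is the factorisation $1+\frac1n+\frac r{n^2}=(1+\frac1n)\bigl(1+\frac r{n(n+1)}\bigr)$, which collapses this to
\[
 d_n:=u_n-u_{n+1}=\tfrac12\ln\Bigl(1+\tfrac1n\Bigr)-\ln\Bigl(1+\tfrac1{2n}\Bigr)-\ln\Bigl(1+\tfrac r{n(n+1)}\Bigr).
\]
Now \emph{every} logarithmic argument lies in $(0,1]$ — in fact $\le\tfrac12$ and $\le\tfrac18$ for the last two when $r\le\tfrac14$ — so on the whole range $n\ge1$ each logarithm is controlled by convergent alternating-series bounds.

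I would fix the constants by matching. Writing $u_n\sim\sum_{k\ge1}c_k n^{-k}$, the coefficient of $n^{-m}$ in $d_n=\sum_k c_k\,(n^{-k}-(n+1)^{-k})$ equals $\sum_{k=1}^{m-1}(-1)^{m-k+1}\binom{m-1}{k-1}c_k$; comparing this with the directly computable Taylor coefficients of the expression above solves recursively for the $c_k$ and yields $c_1=\sigma_1$, $c_2=c_4=0$, $c_3=\sigma_3$, $c_5=\sigma_5$. This simultaneously explains why only odd powers survive and pins down the stated rational values in terms of $r$.

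The bounds themselves I would obtain from Mortici's monotonicity principle, applied twice. For the upper bound put $v_n=u_n-\sigma_1/n-\sigma_3/n^3$; since $v_n\to0$ it suffices to show that $(v_n)$ is strictly increasing, i.e.
\[
 d_n<\sigma_1\Bigl(\tfrac1n-\tfrac1{n+1}\Bigr)+\sigma_3\Bigl(\tfrac1{n^3}-\tfrac1{(n+1)^3}\Bigr)\qquad(n\ge1),
\]
after which $v_n<0$ gives $u_n<\sigma_1/n+\sigma_3/n^3$. For the lower bound put $\tilde v_n=u_n-\sigma_1/n-\sigma_3/n^3-\sigma_5/n^5$ and show that $(\tilde v_n)$ is strictly \emph{decreasing} to $0$, i.e. the reverse inequality with the extra term $\sigma_5(\frac1{n^5}-\frac1{(n+1)^5})$; then $\tilde v_n>0$ gives the lower bound. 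In both cases I would substitute $x=1/n\in(0,1]$, replace each of the three logarithms by an alternating-series truncation (a majorant where the sign of its coefficient calls for an over-estimate, a minorant otherwise), clear the denominators $(1+x)^k$ and $(2+x)^k$, and reduce the claim to a single polynomial inequality in $x\in(0,1]$ and $r\in(0,\tfrac14]$.

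The heart of the argument — and the step I expect to cost the most — is precisely this reduction. The one-step differences are only of size $5\sigma_5\,n^{-6}$ for the upper bound and $7c_7\,n^{-8}$ for the lower one, and these leading sizes emerge only after the $n^{-2},\dots,n^{-5}$ contributions cancel exactly; so the alternating remainders must be carried to high enough order that the governing signs ($\sigma_5<0$ above, and the positivity of the next coefficient $c_7$ below) survive for \emph{every} $n\ge1$, including the delicate endpoint $n=1$ where $x=1$ and the series converge slowly. Since the argument $r/(n(n+1))$ contributes only low powers of $r$ up to the orders needed, the resulting polynomials are of low degree in $r$ (at most cubic for the upper bound, quartic for the lower), so I expect the $r$-dependence to be removed by checking that the relevant coefficients keep their sign on $(0,\tfrac14]$, leaving one-variable inequalities in $x$ that can be certified directly.
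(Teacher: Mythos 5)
Your proposal is correct and is, at its core, the same method the paper uses: both are instances of Mortici's monotone-to-zero argument --- define the remainder after subtracting the matched terms, use Lemma~\ref{lemsti} to know it tends to zero, determine $\sigma_1,\sigma_3,\sigma_5$ by killing low-order coefficients of the exact one-step difference, and read off the sign of the remainder from the monotonicity of that difference, once for the upper bound and once with $\sigma_5$ adjoined for the lower bound. The genuine difference lies in how the sign of the one-step difference is certified, and this is worth comparing. The paper expands the difference around the \emph{larger} of the two indices ($r_n-r_{n-1}$ in powers of $1/n$, needed only for $n\ge 2$), so every logarithm is expanded at an argument of size at most $1/2$ even for the very first step; it then never truncates at all, but instead proves that \emph{every} coefficient of the full convergent series is sign-definite ($\varrho_k>0$ for $k\ge 6$, $\tilde\varrho_k<0$ for $k\ge 8$, via Lemmas~\ref{lemrho} and~\ref{lemrhotilde}, which check four small $k$ by hand and bound the tail uniformly). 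This eliminates exactly the remainder bookkeeping that you identify as the costliest step. Your version expands around the smaller index, so the step from $n=1$ to $n=2$ sits at $x=1/n=1$, where the alternating series for $\tfrac12\ln(1+x)$ converges only like $1/k$ while the available slack is a fixed constant of order $5|\sigma_5|\sim 10^{-2}$; a truncation-based certificate there would need dozens of terms, so you would in practice have to treat $n=1$ separately (or re-center the expansion at $n+1$, which is essentially the paper's fix). Aside from this endpoint issue your setup is sound, and your factorization $1+\tfrac1n+\tfrac{r}{n^2}=\bigl(1+\tfrac1n\bigr)\bigl(1+\tfrac{r}{n(n+1)}\bigr)$, which isolates the entire $R$-dependence in a single logarithm of argument at most $\tfrac18$, is a genuinely nice simplification the paper does not have --- it instead carries the terms $R^k+(1-R)^k$ through its coefficient formulas and bounds them termwise.
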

\begin{proof}
First we prove the upper bound and define $r_n$ as follows:
\begin{align*}
    \frac{\left(\frac{1}{2}\right)_n \left(R\right)_n \left(1-R\right)_n}{n!^3} &= 
  \frac{\sin(\pi R)}{(\pi n)^{3/2}}\exp\left(\frac{\sigma_1}{n}+\frac{\sigma_3}{n^3}+r_n\right)\\
\Longrightarrow\quad r_{n} &= \ln\left(\frac{\left(\frac{1}{2}\right)_n \left(R\right)_n \left(1-R\right)_n}{n!^3}\cdot\frac{(\pi n)^{3/2}}{\sin(\pi R)}\right)-\frac{\sigma_1}{n}-\frac{\sigma_3}{n^3}.
\end{align*}
Here, Lemma \ref{lemsti} proves that $r_n$ approaches $0$ for $n\rightarrow\infty$.

Next, we compute $r_n-r_{n-1}$ as a power series $\sum_{k=1}^\infty \varrho_k/n^k$:
\begin{align*}
    r_n-r_{n-1} &= \ln\left(\frac{\left(n-\frac{1}{2}\right) \left(n+R-1\right) \left(n-R\right)}{n^3}\cdot\frac{n^{3/2}}{(n-1)^{3/2}}\right)\\
    &~~~~-\frac{\sigma_1}{n}-\frac{\sigma_3}{n^3}+\frac{\sigma_1}{n-1}+\frac{\sigma_3}{(n-1)^3}\\
    &=\ln\left(\frac{\left(1-\frac{1}{2n}\right) \left(1-\frac{1-R}{n}\right) \left(1-\frac{R}{n}\right)}{\left(1-\frac{1}{n}\right)^{3/2}}\right)
    +\sum_{k=2}^\infty\frac{\sigma_1}{n^k}+\sum_{k=4}^\infty\frac{\binom{k-1}{2}\sigma_3}{n^k}\\
    &=\sum_{k=1}^\infty\frac{\frac{3}{2}-\left(\frac{1}{2}\right)^k-(1-R)^k-R^k}{k n^k}
    +\sum_{k=2}^\infty\frac{\sigma_1}{n^k}+\sum_{k=4}^\infty\frac{\binom{k-1}{2}\sigma_3}{n^k}=:\sum_{k=1}^\infty \frac{\varrho_k}{n^k}
\end{align*}
Here we observe $\varrho_1 = 0$ and $\varrho_2 = \sigma_1+\frac{1}{8}+R(1-R)$. This explains our choice of $\sigma_1$ and proves $\varrho_2=\varrho_3=0$ and $\varrho_4=-\frac{1}{64}-\frac{1}{2}r^2+3\cdot \sigma_3$. By our choice of $\sigma_3$, we have $\varrho_4=\varrho_5=0$. This proves
\begin{align*}
    r_n-r_{n-1} = \sum_{k=6}^\infty \frac{\varrho_k}{n^k}\qquad\text{with}\qquad\varrho_6&=\frac{1}{128}+\frac{1}{6}r^2+\frac{1}{3}r^3.
\end{align*}
In Lemma \ref{lemrho} we prove $\varrho_k>0$ for all $k\geq 6$ and thus $r_n-r_{n-1}>0$.
This proves that $r_n$ is \emph{increasing} to zero, thus $r_n<0$.

For the lower bound, we use $\tilde r_n = r_n-\sigma_5/n^5$:
\begin{align*}
    \tilde r_n-\tilde r_{n-1} &= \frac{\sigma_5}{(n-1)^5}-\frac{\sigma_5}{n^5}+\sum_{k=6}^\infty \frac{\varrho_k}{n^k}
    = \sum_{k=6}^\infty \frac{\varrho_k+\binom{k-1}{4}\sigma_5}{n^k} =: \sum_{k=6}^\infty \frac{\tilde \varrho_k}{n^k}
\end{align*}
Here, our choice of $\sigma_5=-\varrho_6/5$ yields $\tilde \varrho_6=\tilde \varrho_7=0$ and
\begin{align*}
    \tilde r_n-\tilde r_{n-1} = \sum_{k=8}^\infty \frac{\tilde\varrho_k}{n^k}\qquad\text{with}\qquad\tilde\varrho_8 &= - \frac{17}{2048}-\frac{1}{6}r^2-\frac{1}{3}r^3-\frac{1}{4}r^4.
\end{align*}
In Lemma \ref{lemrhotilde} we prove $\tilde\varrho_k<0$ for all $k\geq 8$ and thus $r_n-r_{n-1}<0$.
This proves that $\tilde r_n$ is \emph{decreasing} to zero, which proves $\tilde r_n>0$.
\end{proof}

\begin{lemma}\label{lemrho}
For $k\geq 6$ and for all $0\leq R \leq 1$ it holds $\varrho_k>0$ with
$$\varrho_k = \frac{\frac{1}{2}-\left(\frac{1}{2}\right)^k}{k} + \frac{R-R^k}{k} + \frac{(1-R)-(1-R)^k}{k} +\sigma_1 + \tbinom{k-1}{2}\sigma_3$$
and $\sigma_1 = -\frac{1}{8}-r$ and $\sigma_3 = \frac{1}{192}+\frac{1}{6}r^2$ and $r=R(1-R)$.
\end{lemma}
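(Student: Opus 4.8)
The plan is to exploit the fact that $\varrho_k$ depends on $R$ only through the symmetric quantity $r=R(1-R)\in[0,\tfrac14]$, so that the whole problem collapses to an interval. Indeed, writing $p_k:=R^k+(1-R)^k$, the numbers $R$ and $1-R$ are the roots of $x^2-x+r=0$, whence $p_0=2$, $p_1=1$ and $p_k=p_{k-1}-r\,p_{k-2}$; moreover $\tfrac12+R+(1-R)=\tfrac32$. I would first use these identities to rewrite
\[
\varrho_k=\frac{1}{k}\left(\frac32-2^{-k}-p_k\right)+\sigma_1+\binom{k-1}{2}\sigma_3,
\]
where $\sigma_1=-\tfrac18-r\ge-\tfrac38$ is bounded and $\binom{k-1}{2}\sigma_3\ge\frac{(k-1)(k-2)}{384}$ grows quadratically in $k$. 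This growing term is the engine of positivity, and everything now lives over $r\in[0,\tfrac14]$.

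For $k$ beyond a small threshold I would decouple the awkward power term from $r$ by the elementary bound $p_k=R^k+(1-R)^k\le R^2+(1-R)^2=1-2r$, valid for $k\ge2$ because $0\le R\le1$. Substituting this turns the estimate for $\varrho_k$ into a quadratic in $r$ with positive leading coefficient $\frac{(k-1)(k-2)}{12}$, whose vertex sits at $r^{\ast}=\frac{6}{k(k-1)}$; this lies in $[0,\tfrac14]$ as soon as $k\ge6$, so the minimum over the admissible interval is attained there and equals an explicit expression $g_k$ in $k$ alone. I would then check $g_k>0$: this is transparent for large $k$, since the quadratically growing $\frac{(k-1)(k-2)}{384}$ eventually dominates the bounded negatives ($-\tfrac18$ together with the $O(1/k)$ corrections), and it reduces to a finite numerical verification for the handful of intermediate $k$ just above the threshold.

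For the finitely many small $k$ not covered by this bound I would instead compute $\varrho_k$ exactly as a polynomial in $r$ using the recursion for $p_k$. This yields, for instance, $\varrho_6=\frac{1}{128}+\frac16 r^2+\frac13 r^3$ and $\varrho_7=\frac{3}{128}+\frac12 r^2+r^3$, which are manifestly positive for $r\ge0$; and even when a negative high-order coefficient appears, as in $\varrho_8=\frac{95}{2048}+r^2+2r^3-\frac14 r^4=\frac{95}{2048}+r^2\left(1+2r-\tfrac14 r^2\right)$, it is controlled using $r\le\tfrac14$, giving $\varrho_8\ge\frac{95}{2048}>0$. Each such case is a one-line positivity check.

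The main obstacle is precisely the coupling between $p_k=R^k+(1-R)^k$ and $r=R(1-R)$. The crude decoupling $p_k\le1-2r$ is most wasteful exactly near the minimizing value $r^{\ast}$, where $R$ is close to $\tfrac12$ and $p_k$ is in fact exponentially small; consequently the bound only becomes effective once the quadratic term $\binom{k-1}{2}\sigma_3$ has grown enough to absorb the loss. Calibrating the threshold so that only a short list of small $k$ requires the explicit polynomial computation, while everything above it is handled uniformly by the minimized quadratic $g_k$, is the delicate part of the argument.
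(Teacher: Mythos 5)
Your plan is correct and shares the paper's overall skeleton---reduce the problem to a quadratic in $r=R(1-R)\in[0,\tfrac14]$ for large $k$, then check the finitely many remaining $k$ as explicit polynomials in $r$---but the execution differs enough to compare. The paper simply \emph{discards} the three power-difference terms (each is nonnegative since $x^k\le x$ for $x\in[0,1]$, $k\ge1$) and observes that for $k\ge10$ one has $\tbinom{k-1}{2}\ge36$, whence
$$\sigma_1+\tbinom{k-1}{2}\sigma_3\;\ge\;-\tfrac18-r+36\left(\tfrac{1}{192}+\tfrac16 r^2\right)\;=\;\tfrac{1}{16}-r+6r^2\;>\;0,$$
the last inequality being a one-line discriminant check ($1-\tfrac{24}{16}<0$), valid for \emph{all} $r$; it then verifies $\varrho_6,\varrho_7,\varrho_8,\varrho_9$ explicitly (the same polynomials you list, plus $\varrho_9=\tfrac{39}{512}+\tfrac53 r^2+\tfrac{10}{3}r^3-r^4$). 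You instead \emph{retain} the power terms via $p_k\le p_2=1-2r$ and minimize the resulting $k$-dependent quadratic at its vertex $r^\ast=6/(k(k-1))$. That is tighter, but note the calibration you flagged as delicate: the minimum value $g_k$ is in fact negative for $k=6$ and $k=7$ (one gets $g_6=\tfrac{1}{128}-\tfrac{1}{15}<0$ and $g_7=\tfrac{3}{128}-\tfrac{5}{98}<0$, because $p_k\le1-2r$ is very lossy near $r^\ast$), and first turns positive at $k=8$, where $g_8=\tfrac{95}{2048}-\tfrac{9}{224}\approx0.006$. So your uniform bound covers $k\ge8$ (after a short monotonicity argument showing $g_k$ keeps growing, since the $\tfrac{(k-1)(k-2)}{384}$ term increases by at least $\tfrac{k-1}{192}$ per step while the corrections move by $O(1/k^2)$), and the explicit checks you supply for $k=6,7$ are genuinely needed, not optional. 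What your route buys is two fewer explicit cases ($k=6,7$ versus the paper's $k=6,\dots,9$); what it costs is the vertex computation, a $k$-dependent quadratic, and the extra growth argument, whereas the paper's quadratic is independent of $k$ and its positivity is immediate. Both are valid proofs of the lemma.
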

\begin{proof}
The first three terms are positive. For $k\geq 10$ it holds $\tbinom{k-1}{2}\geq 36$ and
$$\sigma_1 + \tbinom{k-1}{2}\sigma_3\geq -\frac{1}{8}-r + 36(\frac{1}{192}+\frac{1}{6}r^2)=\frac{1}{16}+6r^2-r>0.$$
It remains to check $\varrho_k>0$ for $k<10$: $\varrho_6=\frac{1}{128}+\frac{1}{6}r^2+\frac{1}{3}r^3$, $\varrho_7 =\frac{3}{128}+\frac{1}{2}r^2+r^3$, $\varrho_8 =\frac{95}{2048}+r^2+2r^3-\frac{1}{4}r^4$ and $\varrho_9 =\frac{39}{512}+\frac{5}{3}r^2+\frac{10}{3}r^3-r^4$ are also positive.
\end{proof}


\begin{lemma}\label{lemrhotilde}
For $k\geq 8$ and for all $0\leq R \leq 1$ it holds $\tilde\varrho_k<0$ with
$$\tilde\varrho_k = \frac{\frac{1}{2}-\left(\frac{1}{2}\right)^k}{k} + \frac{R-R^k}{k} + \frac{(1-R)-(1-R)^k}{k} +\sigma_1 + \tbinom{k-1}{2}\sigma_3 + \tbinom{k-1}{4}\sigma_5$$
and $\sigma_1 = -\frac{1}{8}-r$ and $\sigma_3 = \frac{1}{192}+\frac{1}{6}r^2$ and $\sigma_5=\frac{-1}{640}-\frac{1}{30}r^2-\frac{1}{15}r^3$ and $r=R(1-R)$.
\end{lemma}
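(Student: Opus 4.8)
The plan is to mirror the proof of Lemma \ref{lemrho}, reversing every inequality. As there, the three ``fraction'' terms $\frac{\frac12-(\frac12)^k}{k}$, $\frac{R-R^k}{k}$ and $\frac{(1-R)-(1-R)^k}{k}$ are each nonnegative and together bounded above by $\frac{3}{2k}$, while $\sigma_1=-\frac18-r<0$. The decisive new feature is the term $\binom{k-1}{4}\sigma_5$: since $\sigma_5=-\frac{1}{640}-\frac{1}{30}r^2-\frac{1}{15}r^3\le-\frac{1}{640}<0$ for $r=R(1-R)\in[0,\frac14]$, it contributes a negative quantity of order $k^4$, which should eventually dominate the positive $\binom{k-1}{2}\sigma_3$ of order $k^2$ and force $\tilde\varrho_k<0$. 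As a preliminary reduction I would note that, because $R$ and $1-R$ are the roots of $t^2-t+r$, the power sums $R^k+(1-R)^k$ are polynomials in $r$ satisfying $p_k=p_{k-1}-r\,p_{k-2}$; hence each $\tilde\varrho_k$ is a single polynomial in $r\in[0,\frac14]$, and it suffices to verify negativity on this interval.

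For the large-$k$ range I would fix the threshold $k\ge 12$ and split $\tilde\varrho_k=\bigl(\text{fractions}+\sigma_1\bigr)+\bigl(\binom{k-1}{2}\sigma_3+\binom{k-1}{4}\sigma_5\bigr)$. The first bracket is $<\frac{3}{2k}-\frac18-r\le 0$, strictly negative because the subtracted powers keep the fraction sum below $\frac{3}{2k}\le\frac18$. For the second bracket I would compare the two binomial weights through $\binom{k-1}{4}/\binom{k-1}{2}=\frac{(k-3)(k-4)}{12}$: the coefficient of $r^2$ in the bracket is $\frac{\binom{k-1}{2}}{6}-\frac{\binom{k-1}{4}}{30}$, which is $\le 0$ exactly when $(k-3)(k-4)\ge 60$, i.e. $k\ge 12$; the constant coefficient needs only $(k-3)(k-4)\ge 40$, and the $r^3$ coefficient $-\frac{\binom{k-1}{4}}{15}$ is negative unconditionally. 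Thus for $k\ge 12$ every coefficient of the second bracket is $\le 0$, so the whole bracket is $\le 0$ for $r\ge 0$, and $\tilde\varrho_k<0$ follows.

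It then remains to treat the finitely many cases $k=8,9,10,11$ by direct computation, exactly as Lemma \ref{lemrho} handles $k<10$. Using $p_k$ one writes each $\tilde\varrho_k$ explicitly as a polynomial in $r$; each turns out to have a strictly negative constant term together with nonpositive coefficients up to $r^4$ (so $\tilde\varrho_8=-\frac{17}{2048}-\frac16 r^2-\frac13 r^3-\frac14 r^4$ and $\tilde\varrho_9=-\frac{17}{512}-\frac23 r^2-\frac43 r^3-r^4$ are already manifestly negative). The only blemish is a small positive $r^5$ coefficient appearing for $k=10,11$, which on $[0,\frac14]$ is absorbed by the adjacent $r^4$ term via $r^5\le\frac14 r^4$, so negativity persists.

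I expect the main obstacle to be precisely this uniform-in-$r$ control in the borderline range. For $k=11$ the quartic weight $\binom{k-1}{4}$ has not yet overtaken $\binom{k-1}{2}$ coefficientwise, since $(k-3)(k-4)=56<60$, so the clean large-$k$ comparison fails and one is forced into the explicit polynomial check; keeping that finite verification short — by exploiting that the only positive monomial is the top one and bounding it against its predecessor on $[0,\frac14]$ — is the delicate step.
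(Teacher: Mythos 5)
Your proof is correct and takes essentially the same route as the paper: the same decomposition of $\tilde\varrho_k$ with the same threshold $k\ge 12$, followed by the same explicit polynomial verification for $k=8,9,10,11$ (the paper records the identical expressions for $\tilde\varrho_8,\tilde\varrho_9$ and likewise absorbs the positive $r^5$ terms at $k=10,11$ into the dominant $r^4$ terms, using just $r\le 1$). The only cosmetic difference is that for $k\ge 12$ the paper bounds the binomial bracket in lumped form, via $\tbinom{k-1}{4}\ge 6\tbinom{k-1}{2}$ and $\sigma_3+6\sigma_5\le -\tfrac{1}{240}$, rather than by your coefficient-by-coefficient comparison.
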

\begin{proof}
Since $\tbinom{k-1}{4}=\frac{(k-3)(k-4)}{12}\tbinom{k-1}{2}$, for $k\geq 12$ we have $\tbinom{k-1}{4}\geq 6 \tbinom{k-1}{2}$ and
\begin{align*}
    \tbinom{k-1}{2}\sigma_3+\tbinom{k-1}{4}\sigma_5
    &\leq\tbinom{k-1}{2}\left(\sigma_3+6\sigma_5\right) \leq 55\left( -\frac{1}{240}-\frac{1}{30}r^2-\frac{2}{5}r^3\right)\leq-\frac{11}{48}
\end{align*}
The first three terms in $\tilde\varrho_k$ are less than $\frac{3/2}{k}$. This proves $\tilde\varrho_k\leq \frac{3/2}{12}+\sigma_1-\frac{11}{48}<0$ (if $k\geq 12$).

It remains to check $\tilde\varrho_k<0$ for $k<12$:
\begin{align*}
    \tilde\varrho_{8}&=\varrho_8+35\sigma_5
    = -\frac{17}{2048} - \frac{1}{6}r^2 - \frac{1}{3}r^3 - \frac{1}{4}r^4\\
    \tilde\varrho_{9} &=\varrho_9+70\sigma_5
    = -\frac{17}{512} - \frac{2}{3} r^2 - \frac{4}{3} r^3 - r^4\\
    \tilde\varrho_{10} &=\frac{2r^5-25r^4 + 50r^3-35r^2+10r+\frac{511}{1024}}{10}+\sigma_1+36\sigma_3+126\sigma_5\\
    &= 
    \frac{-173}{2048}-\frac{17}{10}r^2-\frac{17}{5}r^3-\frac{5}{2}r^4+\frac{1}{5}r^5\\
    \tilde\varrho_{11} &=\frac{  11r^5-55r^4+77r^3-44r^2+ 11r + \frac{1023}{2048} }{11}+\sigma_1+45\sigma_3+210\sigma_5\\
    &= \frac{-355}{2048}-\frac{7}{2}r^2-7r^3-5r^4+r^5
\end{align*}
Since we have $0\leq R \leq 1$, we have $0\leq r \leq 1$, which proves $\tilde\varrho_k<0$ for all $k$.
\end{proof}

  \section{Asymptotic expansions for further series}
  \renewcommand{\leftmark}{Asymptotic expansions for further series}\label{AppSeries}
  We have used Lemma \ref{lemma-pin} to estimate $\pi_n\approx\pi$ in
$$\pi_n-\pi=\pi~\pi_n~\left(\frac{1}{\pi}-\frac{1}{\pi_n}\right)\approx\pi^2~\left(\frac{1}{\pi}-\frac{1}{\pi_n}\right).$$
This has worked well, because the Chudnovskys' Series \ref{ser7} and Ramanujan's Series \ref{ser23} converge so fast. For most other series, this estimate is valid only for larger $n$. For example in Series \ref{ser1}, we need $n\geq 25$ (see Fig. \ref{fig:Theta1}).

In Series \ref{ser33}, the convergence is too slow and we can not use this estimate at all. This is why we only give an expansion for $\frac{1}{\pi}-\frac{1}{\pi_{33}(n)}$ at Series \ref{ser33}.

\begin{table}[ht]
    \centering
\begin{tabular}{c|c|c|c|c}
\textbf{\#} & \textbf{R} & \textbf{S} & \textbf{Z} & \textbf{P} \\
\hline
\ref{ser1} & $1/6$ & $8/63$ & $-64/125$ & $21\sqrt{15}/25$ \\
\ref{ser2} & $1/6$ & $15/154$ & $-27/512$ & $77\sqrt{2}/32$ \\
\ref{ser3} & $1/6$ & $25/342$ & $-1/512$ & $57\sqrt{6}/32$ \\
\ref{ser4} & $1/6$ & $31/506$ & $-9/64000$ & $759\sqrt{30}/800$ \\
\ref{ser5} & $1/6$ & $263/5418$ & $-1/512000$ & $2709\sqrt{15}/1600$ \\
\ref{ser6} & $1/6$ & $10177/261702$ & $-1/85184000$ & $43617\sqrt{330}/96800$ \\
\ref{ser7} & $1/6$ & $13591409/545140134$ & $-1/53360^3$ & $90856689\sqrt{10005}/711822400$ \\
\ref{ser8} & $1/6$ & $3/28$ & $27/125$ & $28\sqrt{5}/25$ \\
\ref{ser9} & $1/6$ & $1/11$ & $4/125$ & $22\sqrt{15}/25$ \\
\ref{ser10} & $1/6$ & $5/63$ & $8/1331$ & $84\sqrt{33}/121$ \\
\ref{ser11} & $1/6$ & $8/133$ & $64/614125$ & $2394\sqrt{255}/7225$ \\
\hdashline
\ref{ser12} & $1/4$ & $3/20$ & $-1/4$ & $5/2$ \\
\ref{ser13} & $1/4$ & $23/260$ & $-1/324$ & $65/18$ \\
\ref{ser14} & $1/4$ & $1123/21460$ & $-1/777924$ & $5365/882$ \\
\ref{ser15} & $1/4$ & $8/65$ & $-256/3969$ & $65\sqrt{7}/63$ \\
\ref{ser16} & $1/4$ & $3/28$ & $-1/48$ & $7\sqrt{3}/4$ \\
\ref{ser17} & $1/4$ & $41/644$ & $-1/25920$ & $161\sqrt{5}/72$ \\
\ref{ser18} & $1/4$ & $1/7$ & $32/81$ & $14/9$ \\
\ref{ser19} & $1/4$ & $1/8$ & $1/9$ & $4\sqrt{3}/3$ \\
\ref{ser20} & $1/4$ & $1/10$ & $1/81$ & $20\sqrt{2}/9$ \\
\ref{ser21} & $1/4$ & $3/40$ & $1/2401$ & $120\sqrt{3}/49$ \\
\ref{ser22} & $1/4$ & $19/280$ & $1/9801$ & $140\sqrt{11}/99$ \\
\ref{ser23} & $1/4$ & $1103/26390$ & $1/96059601$ & $52780\sqrt{2}/9801$ \\
\hdashline
\ref{ser24} & $1/3$ & $7/51$ & $-1/16$ & $17\sqrt{3}/12$ \\
\ref{ser25} & $1/3$ & $53/615$ & $-1/1024$ & $205\sqrt{3}/96$ \\
\ref{ser26} & $1/3$ & $827/14151$ & $-1/250000$ & $4717\sqrt{3}/1500$ \\
\ref{ser27} & $1/3$ & $1/5$ & $-9/16$ & $5\sqrt{3}/4$ \\
\ref{ser28} & $1/3$ & $1/9$ & $-1/80$ & $3\sqrt{15}/4$ \\
\ref{ser29} & $1/3$ & $13/165$ & $-1/3024$ & $55\sqrt{7}/36$ \\
\ref{ser30} & $1/3$ & $1/6$ & $1/2$ & $2\sqrt{3}/3$ \\
\ref{ser31} & $1/3$ & $2/15$ & $2/27$ & $20/9$ \\
\ref{ser32} & $1/3$ & $4/33$ & $4/125$ & $22\sqrt{3}/15$ \\
\hdashline
\ref{ser33} & $1/2$ & $1/4$ & $-1$ & $2$ \\
\ref{ser34} & $1/2$ & $1/6$ & $-1/8$ & $3\sqrt{2}/2$ \\
\ref{ser35} & $1/2$ & $1/6$ & $1/4$ & $3/2$ \\
\ref{ser36} & $1/2$ & $5/42$ & $1/64$ & $21/8$ \\
\end{tabular}
\vspace{6pt}
    \caption{Coefficients $(R,S,Z,P)$ in formulae of type (\ref{typersz}).}
    \label{tab:rsz}
\end{table}

\begin{series}\label{ser1}
If we denote by $1/\pi_{1}(n)$ the truncated series
\begin{align*}
\frac{1}{\pi_{1}(n)} = \frac{21\sqrt{15}}{25}\sum_{k=0}^{n-1} \frac{\left(\frac{1}{2}\right)_k\left(\frac{1}{6}\right)_k\left(\frac{5}{6}\right)_k}{(k!)^3}\left(k+\frac{8}{63}\right)\left(\frac{-64}{125}\right)^k,
\end{align*}
then we have that $\pi_{1}(n)\rightarrow\pi$ as $n\rightarrow\infty$ and we have the asymptotic expansion
\begin{dmath*}
\pi_{1}(n)-\pi=\left(\frac{-64}{125}\right)^n\frac{5\sqrt{15\pi}}{18\sqrt{n}}\cdot\exp\left(
\frac{7}{216n}
+\frac{-80}{729n^2}
+\frac{23129}{3779136n^3}
+\frac{47168}{531441n^4}
\Theta_{1}(n)\right).
\end{dmath*}
For all $n\geq 25$ we have that $0<\Theta_{1}(n)<1$
and for $n\rightarrow\infty$ we have that $\Theta_{1}(n) \rightarrow 1$.
\end{series}

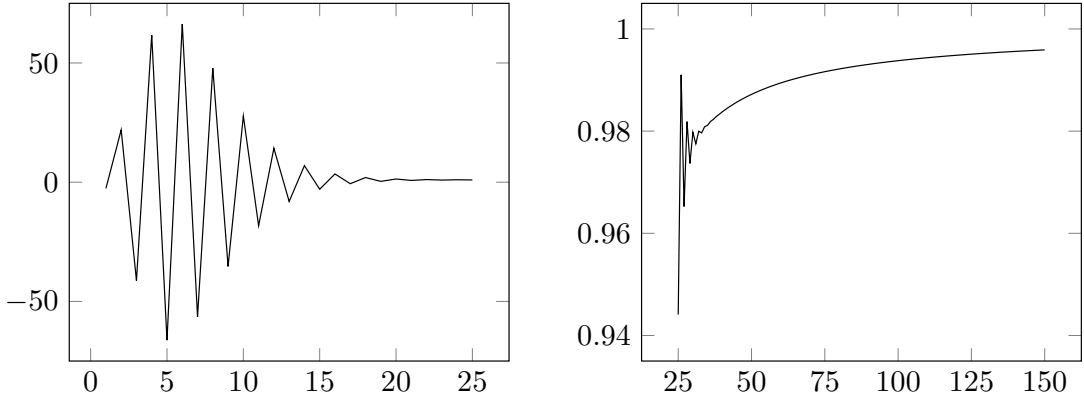
\begin{figure}
\begin{tikzpicture}\begin{axis}[width=6.7cm,ymin=-75,ymax=75]
\addplot[solid]  coordinates { 
(1, -2.6134928679578042)
(2, 21.925520592315696)
(3, -41.22341863639891)
(4, 61.671916921210695)
(5, -66.25578047234418)
(6, 66.34938594349074)
(7, -56.435900677385106)
(8, 47.803842435089884)
(9, -35.321694043919585)
(10, 27.75626181539341)
(11, -18.241199699964064)
(12, 14.248989442021566)
(13, -8.073919056951986)
(14, 6.931839799108738)
(15, -2.948001957145045)
(16, 3.458156092738221)
(17, -0.6257627041388949)
(18, 1.951657280606851)
(19, 0.35044918121494806)
(20, 1.3408060041599688)
(21, 0.7391601837657914)
(22, 1.106220624896502)
(23, 0.8881581861744671)
(24, 1.0205313176791484)
(25, 0.9441056553600823)
};
\end{axis}\end{tikzpicture}\hfill\begin{tikzpicture}\begin{axis}[width=6.7cm,ymin=0.935,ymax=1.005,xtick={25,50,75,100,125,150}]
\addplot[solid]  coordinates { 
(25, 0.9441056553600823)
(26, 0.9910270711997623)
(27, 0.9652431987203423)
(28, 0.9818652296682363)
(29, 0.9736775169238677)
(30, 0.9797635667933279)
(31, 0.9775166372803474)
(32, 0.9799749222917677)
(33, 0.9796615661006075)
(34, 0.9808607365712706)
(35, 0.9811364797498999)
(36, 0.9818786600641717)
(37, 0.9823048055600585)
(38, 0.9828613693817471)
(39, 0.9833007262580754)
(40, 0.9837655409607662)
(41, 0.9841789663931623)
(42, 0.9845868060156849)
(43, 0.9849660346305765)
(44, 0.9853317710349845)
(45, 0.9856776885066614)
(46, 0.9860091832006208)
(47, 0.9863249822123351)
(48, 0.9866273535240605)
(49, 0.9869164743077455)
(50, 0.9871935550270515)
(51, 0.9874591292575089)
(52, 0.9877140059959681)
(53, 0.9879587557208384)
(54, 0.9881940009481969)
(55, 0.9884202644310125)
(56, 0.9886380589058987)
(57, 0.9888478439318534)
(58, 0.9890500547338728)
(59, 0.9892450911821975)
(60, 0.9894333280186183)
(61, 0.9896151130790721)
(62, 0.9897907716665173)
(63, 0.9899606072443107)
(64, 0.9901249038289484)
(65, 0.9902839271958699)
(66, 0.9904379264925594)
(67, 0.9905871354207431)
(68, 0.9907317734637759)
(69, 0.9908720469214508)
(70, 0.9910081498980864)
(71, 0.9911402651805002)
(72, 0.9912685650536565)
(73, 0.9913932120402794)
(74, 0.9915143595825237)
(75, 0.9916321526654883)
(76, 0.991746728391183)
(77, 0.9918582165058972)
(78, 0.9919667398861662)
(79, 0.9920724149866165)
(80, 0.9921753522533714)
(81, 0.9922756565059336)
(82, 0.9923734272903795)
(83, 0.9924687592063053)
(84, 0.9925617422097935)
(85, 0.9926524618944098)
(86, 0.9927409997520708)
(87, 0.992827433415441)
(88, 0.9929118368833659)
(89, 0.9929942807307063)
(90, 0.9930748323038194)
(91, 0.9931535559028168)
(92, 0.99323051295163)
(93, 0.9933057621568228)
(94, 0.9933793596560102)
(95, 0.9934513591566667)
(96, 0.9935218120660416)
(97, 0.9935907676128382)
(98, 0.9936582729612595)
(99, 0.9937243733179727)
(100, 0.9937891120324981)
(101, 0.993852530691491)
(102, 0.9939146692073441)
(103, 0.993975565901504)
(104, 0.9940352575828686)
(105, 0.9940937796215972)
(106, 0.9941511660186464)
(107, 0.9942074494713143)
(108, 0.9942626614350608)
(109, 0.994316832181845)
(110, 0.9943699908552089)
(111, 0.9944221655223162)
(112, 0.9944733832231396)
(113, 0.9945236700169783)
(114, 0.9945730510264722)
(115, 0.99462155047927)
(116, 0.9946691917474934)
(117, 0.9947159973851334)
(118, 0.9947619891635043)
(119, 0.9948071881048718)
(120, 0.9948516145143618)
(121, 0.9948952880102551)
(122, 0.99493822755276)
(123, 0.994980451471351)
(124, 0.9950219774907579)
(125, 0.9950628227556811)
(126, 0.9951030038543052)
(127, 0.9951425368406792)
(128, 0.9951814372560263)
(129, 0.995219720149042)
(130, 0.9952574000952367)
(131, 0.9952944912153739)
(132, 0.9953310071930548)
(133, 0.9953669612914922)
(134, 0.9954023663695195)
(135, 0.9954372348968747)
(136, 0.995471578968795)
(137, 0.9955054103199619)
(138, 0.9955387403378265)
(139, 0.9955715800753503)
(140, 0.9956039402631874)
(141, 0.9956358313213408)
(142, 0.9956672633703145)
(143, 0.9956982462417907)
(144, 0.9957287894888529)
(145, 0.9957589023957782)
(146, 0.9957885939874199)
(147, 0.9958178730381999)
(148, 0.9958467480807298)
(149, 0.9958752274140787)
(150, 0.9959033191117044)
};
\end{axis}\end{tikzpicture}
    \caption{Values of $\Theta_1(n)$ from Series \ref{ser1} showing strong oscillation for $n<25$ (left) and damped oscillation for $n\geq 25$ (right).}
    \label{fig:Theta1}
\end{figure}

\hrule

\begin{series}\label{ser2}
If we denote by $1/\pi_{2}(n)$ the truncated series
\begin{align*}
\frac{1}{\pi_{2}(n)} = \frac{77\sqrt{2}}{32}\sum_{k=0}^{n-1} \frac{\left(\frac{1}{2}\right)_k\left(\frac{1}{6}\right)_k\left(\frac{5}{6}\right)_k}{(k!)^3}\left(k+\frac{15}{154}\right)\left(\frac{-27}{512}\right)^k,
\end{align*}
then we have that $\pi_{2}(n)\rightarrow\pi$ as $n\rightarrow\infty$ and we have the asymptotic expansion
\begin{dmath*}
\pi_{2}(n)-\pi=\left(\frac{-27}{512}\right)^n\frac{8\sqrt{2\pi}}{7\sqrt{n}}\cdot\exp\left(
\frac{-499}{3528n}
+\frac{-291}{9604n^2}
+\frac{57332435}{1829677248n^3}
+\frac{-2919207}{184473632n^4}
\Theta_{2}(n)\right).
\end{dmath*}
For all $n\geq 4$ we have that $0<\Theta_{2}(n)<1$
and for $n\rightarrow\infty$ we have that $\Theta_{2}(n) \rightarrow 1$.
\end{series}
\hrule

\begin{series}\label{ser3}
If we denote by $1/\pi_{3}(n)$ the truncated series
\begin{align*}
\frac{1}{\pi_{3}(n)} = \frac{57\sqrt{6}}{32}\sum_{k=0}^{n-1} \frac{\left(\frac{1}{2}\right)_k\left(\frac{1}{6}\right)_k\left(\frac{5}{6}\right)_k}{(k!)^3}\left(k+\frac{25}{342}\right)\left(\frac{-1}{512}\right)^k,
\end{align*}
then we have that $\pi_{3}(n)\rightarrow\pi$ as $n\rightarrow\infty$ and we have the asymptotic expansion
\begin{dmath*}
\pi_{3}(n)-\pi=\left(\frac{-1}{512}\right)^n\frac{8\sqrt{6\pi}}{9\sqrt{n}}\cdot\exp\left(
\frac{-41}{216n}
+\frac{-11}{2916n^2}
+\frac{36665}{3779136n^3}
\Theta_{3}(n)\right).
\end{dmath*}
For all $n\geq 1$ we have that $0<\Theta_{3}(n)<1$
and for $n\rightarrow\infty$ we have that $\Theta_{3}(n) \rightarrow 1$.
\end{series}
\hrule

\begin{series}\label{ser4}
If we denote by $1/\pi_{4}(n)$ the truncated series
\begin{align*}
\frac{1}{\pi_{4}(n)} = \frac{759\sqrt{30}}{800}\sum_{k=0}^{n-1} \frac{\left(\frac{1}{2}\right)_k\left(\frac{1}{6}\right)_k\left(\frac{5}{6}\right)_k}{(k!)^3}\left(k+\frac{31}{506}\right)\left(\frac{-9}{64000}\right)^k,
\end{align*}
then we have that $\pi_{4}(n)\rightarrow\pi$ as $n\rightarrow\infty$ and we have the asymptotic expansion
\begin{dmath*}
\pi_{4}(n)-\pi=\left(\frac{-9}{64000}\right)^n\frac{120\sqrt{30\pi}}{253\sqrt{n}}\cdot\exp\left(
\frac{-933499}{4608648n}
+\frac{-32087155}{16388608324n^2}
+\frac{35019155487209387}{4078583656660041408n^3}
\Theta_{4}(n)\right).
\end{dmath*}
For all $n\geq 1$ we have that $0<\Theta_{4}(n)<1$
and for $n\rightarrow\infty$ we have that $\Theta_{4}(n) \rightarrow 1$.
\end{series}
\hrule

\begin{series}\label{ser5}
If we denote by $1/\pi_{5}(n)$ the truncated series
\begin{align*}
\frac{1}{\pi_{5}(n)} = \frac{2709\sqrt{15}}{1600}\sum_{k=0}^{n-1} \frac{\left(\frac{1}{2}\right)_k\left(\frac{1}{6}\right)_k\left(\frac{5}{6}\right)_k}{(k!)^3}\left(k+\frac{263}{5418}\right)\left(\frac{-1}{512000}\right)^k,
\end{align*}
then we have that $\pi_{5}(n)\rightarrow\pi$ as $n\rightarrow\infty$ and we have the asymptotic expansion
\begin{dmath*}
\pi_{5}(n)-\pi=\left(\frac{-1}{512000}\right)^n\frac{160\sqrt{15\pi}}{189\sqrt{n}}\cdot\exp\left(
\frac{-20513}{95256n}
+\frac{-668795}{567106596n^2}
+\frac{2742951440609}{324121835451456n^3}
\Theta_{5}(n)\right).
\end{dmath*}
For all $n\geq 1$ we have that $0<\Theta_{5}(n)<1$
and for $n\rightarrow\infty$ we have that $\Theta_{5}(n) \rightarrow 1$.
\end{series}
\hrule

\begin{series}\label{ser6}
If we denote by $1/\pi_{6}(n)$ the truncated series
\begin{align*}
\frac{1}{\pi_{6}(n)} = \frac{43617\sqrt{330}}{96800}\sum_{k=0}^{n-1} \frac{\left(\frac{1}{2}\right)_k\left(\frac{1}{6}\right)_k\left(\frac{5}{6}\right)_k}{(k!)^3}\left(k+\frac{10177}{261702}\right)\left(\frac{-1}{440^3}\right)^k,
\end{align*}
then we have that $\pi_{6}(n)\rightarrow\pi$ as $n\rightarrow\infty$ and we have the asymptotic expansion
\begin{dmath*}
\pi_{6}(n)-\pi=\left(\frac{-1}{440^3}\right)^n\frac{440\sqrt{330\pi}}{1953\sqrt{n}}\cdot\exp\left(
\frac{-2288537}{10171224n}
+\frac{-4889066795}{6465862353636n^2}
+\frac{3331546471820293001}{394594406111993822784n^3}
\Theta_{6}(n)\right).
\end{dmath*}
For all $n\geq 1$ we have that $0<\Theta_{6}(n)<1$
and for $n\rightarrow\infty$ we have that $\Theta_{6}(n) \rightarrow 1$.
\end{series}
\hrule

\begin{series}\label{ser7}
If we denote by $1/\pi_{7}(n)$ the truncated series
\begin{align*}
\frac{1}{\pi_{7}(n)} = \frac{90856689\sqrt{10005}}{711822400}\sum_{k=0}^{n-1} \frac{\left(\frac{1}{2}\right)_k\left(\frac{1}{6}\right)_k\left(\frac{5}{6}\right)_k}{(k!)^3}\left(k+\frac{13591409}{545140134}\right)\left(\frac{-1}{53360^3}\right)^k,
\end{align*}
then we have that $\pi_{7}(n)\rightarrow\pi$ as $n\rightarrow\infty$ and we have the asymptotic expansion
\begin{dmath*}
\pi_{7}(n)-\pi=\left(\frac{-1}{53360^3}\right)^n\frac{106720\sqrt{10005\pi}}{1672209\sqrt{n}}\cdot\exp\left(
\frac{-1781843197433}{7456754505816n}
+\frac{-1080096011925710088395}{3475199235000451148614116n^2}
+\frac{1310485187935583963485460802564780329}{155482245325187582131326612761170191936n^3}
\Theta_{7}(n)\right).
\end{dmath*}
For all $n\geq 1$ we have that $0<\Theta_{7}(n)<1$
and for $n\rightarrow\infty$ we have that $\Theta_{7}(n) \rightarrow 1$.
\end{series}
\hrule

\begin{series}\label{ser8}
If we denote by $1/\pi_{8}(n)$ the truncated series
\begin{align*}
\frac{1}{\pi_{8}(n)} = \frac{28\sqrt{5}}{25}\sum_{k=0}^{n-1} \frac{\left(\frac{1}{2}\right)_k\left(\frac{1}{6}\right)_k\left(\frac{5}{6}\right)_k}{(k!)^3}\left(k+\frac{3}{28}\right)\left(\frac{27}{125}\right)^k,
\end{align*}
then we have that $\pi_{8}(n)\rightarrow\pi$ as $n\rightarrow\infty$ and we have the asymptotic expansion
\begin{dmath*}
\pi_{8}(n)-\pi=\left(\frac{27}{125}\right)^n\frac{5\sqrt{5\pi}}{7\sqrt{n}}\cdot\exp\left(
\frac{-1039}{3528n}
+\frac{3615}{19208n^2}
+\frac{-601881853}{1829677248n^3}
+\frac{288589047}{368947264n^4}
+\frac{-132250237090171}{56482136645760n^5}
\Theta_{8}(n)\right).
\end{dmath*}
For all $n\geq 4$ we have that $0<\Theta_{8}(n)<1$
and for $n\rightarrow\infty$ we have that $\Theta_{8}(n) \rightarrow 1$.
\end{series}
\hrule

\begin{series}\label{ser9}
If we denote by $1/\pi_{9}(n)$ the truncated series
\begin{align*}
\frac{1}{\pi_{9}(n)} = \frac{22\sqrt{15}}{25}\sum_{k=0}^{n-1} \frac{\left(\frac{1}{2}\right)_k\left(\frac{1}{6}\right)_k\left(\frac{5}{6}\right)_k}{(k!)^3}\left(k+\frac{1}{11}\right)\left(\frac{4}{125}\right)^k,
\end{align*}
then we have that $\pi_{9}(n)\rightarrow\pi$ as $n\rightarrow\infty$ and we have the asymptotic expansion
\begin{dmath*}
\pi_{9}(n)-\pi=\left(\frac{4}{125}\right)^n\frac{5\sqrt{15\pi}}{11\sqrt{n}}\cdot\exp\left(
\frac{-1651}{8712n}
+\frac{3865}{263538n^2}
+\frac{-339804517}{27551316672n^3}
+\frac{1739500937}{69452277444n^4}
+\frac{-119121688877123}{3300372224138880n^5}
\Theta_{9}(n)\right).
\end{dmath*}
For all $n\geq 2$ we have that $0<\Theta_{9}(n)<1$
and for $n\rightarrow\infty$ we have that $\Theta_{9}(n) \rightarrow 1$.
\end{series}
\hrule

\begin{series}\label{ser10}
If we denote by $1/\pi_{10}(n)$ the truncated series
\begin{align*}
\frac{1}{\pi_{10}(n)} = \frac{84\sqrt{33}}{121}\sum_{k=0}^{n-1} \frac{\left(\frac{1}{2}\right)_k\left(\frac{1}{6}\right)_k\left(\frac{5}{6}\right)_k}{(k!)^3}\left(k+\frac{5}{63}\right)\left(\frac{8}{1331}\right)^k,
\end{align*}
then we have that $\pi_{10}(n)\rightarrow\pi$ as $n\rightarrow\infty$ and we have the asymptotic expansion
\begin{dmath*}
\pi_{10}(n)-\pi=\left(\frac{8}{1331}\right)^n\frac{22\sqrt{33\pi}}{63\sqrt{n}}\cdot\exp\left(
\frac{-1985}{10584n}
+\frac{899}{3500658n^2}
+\frac{2216760425}{444611571264n^3}
+\frac{46674125279}{12254606432964n^4}
+\frac{-483110468975635}{74115859706566272n^5}
\Theta_{10}(n)\right).
\end{dmath*}
For all $n\geq 2$ we have that $0<\Theta_{10}(n)<1$
and for $n\rightarrow\infty$ we have that $\Theta_{10}(n) \rightarrow 1$.
\end{series}
\hrule

\begin{series}\label{ser11}
If we denote by $1/\pi_{11}(n)$ the truncated series
\begin{align*}
\frac{1}{\pi_{11}(n)} = \frac{2394\sqrt{255}}{7225}\sum_{k=0}^{n-1} \frac{\left(\frac{1}{2}\right)_k\left(\frac{1}{6}\right)_k\left(\frac{5}{6}\right)_k}{(k!)^3}\left(k+\frac{8}{133}\right)\left(\frac{64}{85^3}\right)^k,
\end{align*}
then we have that $\pi_{11}(n)\rightarrow\pi$ as $n\rightarrow\infty$ and we have the asymptotic expansion
\begin{dmath*}
\pi_{11}(n)-\pi=\left(\frac{64}{85^3}\right)^n\frac{85\sqrt{255\pi}}{513\sqrt{n}}\cdot\exp\left(
\frac{-143017}{701784n}
+\frac{-13456880}{7695324729n^2}
+\frac{1092857149734121}{129610938470796864n^3}
+\frac{3781366699871168}{59218022684758923441n^4}
+\frac{-8157754659852064155749017}{3324660866544068506249499520n^5}
\Theta_{11}(n)\right).
\end{dmath*}
For all $n\geq 1$ we have that $0<\Theta_{11}(n)<1$
and for $n\rightarrow\infty$ we have that $\Theta_{11}(n) \rightarrow 1$.
\end{series}
\hrule

\begin{series}\label{ser12}
If we denote by $1/\pi_{12}(n)$ the truncated series
\begin{align*}
\frac{1}{\pi_{12}(n)} = \frac{5}{2}\sum_{k=0}^{n-1} \frac{\left(\frac{1}{2}\right)_k\left(\frac{1}{4}\right)_k\left(\frac{3}{4}\right)_k}{(k!)^3}\left(k+\frac{3}{20}\right)\left(\frac{-1}{4}\right)^k,
\end{align*}
then we have that $\pi_{12}(n)\rightarrow\pi$ as $n\rightarrow\infty$ and we have the asymptotic expansion
\begin{dmath*}
\pi_{12}(n)-\pi=\left(\frac{-1}{4}\right)^n\frac{\sqrt{2\pi}}{\sqrt{n}}\cdot\exp\left(
\frac{-1}{16n}
+\frac{-3}{32n^2}
+\frac{73}{1536n^3}
+\frac{33}{1024n^4}
+\frac{-1861}{20480n^5}
\Theta_{12}(n)\right).
\end{dmath*}
For all $n\geq 12$ we have that $0<\Theta_{12}(n)<1$
and for $n\rightarrow\infty$ we have that $\Theta_{12}(n) \rightarrow 1$.
\end{series}
\hrule

\begin{series}\label{ser13}
If we denote by $1/\pi_{13}(n)$ the truncated series
\begin{align*}
\frac{1}{\pi_{13}(n)} = \frac{65}{18}\sum_{k=0}^{n-1} \frac{\left(\frac{1}{2}\right)_k\left(\frac{1}{4}\right)_k\left(\frac{3}{4}\right)_k}{(k!)^3}\left(k+\frac{23}{260}\right)\left(\frac{-1}{324}\right)^k,
\end{align*}
then we have that $\pi_{13}(n)\rightarrow\pi$ as $n\rightarrow\infty$ and we have the asymptotic expansion
\begin{dmath*}
\pi_{13}(n)-\pi=\left(\frac{-1}{324}\right)^n\frac{9\sqrt{2\pi}}{5\sqrt{n}}\cdot\exp\left(
\frac{-89}{400n}
+\frac{-23}{4000n^2}
+\frac{317969}{24000000n^3}
\Theta_{13}(n)\right).
\end{dmath*}
For all $n\geq 1$ we have that $0<\Theta_{13}(n)<1$
and for $n\rightarrow\infty$ we have that $\Theta_{13}(n) \rightarrow 1$.
\end{series}
\hrule

\begin{series}\label{ser14}
If we denote by $1/\pi_{14}(n)$ the truncated series
\begin{align*}
\frac{1}{\pi_{14}(n)} = \frac{5365}{882}\sum_{k=0}^{n-1} \frac{\left(\frac{1}{2}\right)_k\left(\frac{1}{4}\right)_k\left(\frac{3}{4}\right)_k}{(k!)^3}\left(k+\frac{1123}{21460}\right)\left(\frac{-1}{777924}\right)^k,
\end{align*}
then we have that $\pi_{14}(n)\rightarrow\pi$ as $n\rightarrow\infty$ and we have the asymptotic expansion
\begin{dmath*}
\pi_{14}(n)-\pi=\left(\frac{-1}{777924}\right)^n\frac{441\sqrt{2\pi}}{145\sqrt{n}}\cdot\exp\left(
\frac{-87521}{336400n}
+\frac{-155039}{113164960n^2}
+\frac{158694702263081}{14275759704000000n^3}
\Theta_{14}(n)\right).
\end{dmath*}
For all $n\geq 1$ we have that $0<\Theta_{14}(n)<1$
and for $n\rightarrow\infty$ we have that $\Theta_{14}(n) \rightarrow 1$.
\end{series}
\hrule

\begin{series}\label{ser15}
If we denote by $1/\pi_{15}(n)$ the truncated series
\begin{align*}
\frac{1}{\pi_{15}(n)} = \frac{65\sqrt{7}}{63}\sum_{k=0}^{n-1} \frac{\left(\frac{1}{2}\right)_k\left(\frac{1}{4}\right)_k\left(\frac{3}{4}\right)_k}{(k!)^3}\left(k+\frac{8}{65}\right)\left(\frac{-256}{3969}\right)^k,
\end{align*}
then we have that $\pi_{15}(n)\rightarrow\pi$ as $n\rightarrow\infty$ and we have the asymptotic expansion
\begin{dmath*}
\pi_{15}(n)-\pi=\left(\frac{-256}{3969}\right)^n\frac{63\sqrt{14\pi}}{130\sqrt{n}}\cdot\exp\left(
\frac{-10757}{67600n}
+\frac{-140944}{3570125n^2}
+\frac{4556280871313}{115843416000000n^3}
+\frac{-5541878772448}{318644812890625n^4}
+\frac{-123624675437230027457}{27571698369800000000000n^5}
\Theta_{15}(n)\right).
\end{dmath*}
For all $n\geq 5$ we have that $0<\Theta_{15}(n)<1$
and for $n\rightarrow\infty$ we have that $\Theta_{15}(n) \rightarrow 1$.
\end{series}
\hrule

\begin{series}\label{ser16}
If we denote by $1/\pi_{16}(n)$ the truncated series
\begin{align*}
\frac{1}{\pi_{16}(n)} = \frac{7\sqrt{3}}{4}\sum_{k=0}^{n-1} \frac{\left(\frac{1}{2}\right)_k\left(\frac{1}{4}\right)_k\left(\frac{3}{4}\right)_k}{(k!)^3}\left(k+\frac{3}{28}\right)\left(\frac{-1}{48}\right)^k,
\end{align*}
then we have that $\pi_{16}(n)\rightarrow\pi$ as $n\rightarrow\infty$ and we have the asymptotic expansion
\begin{dmath*}
\pi_{16}(n)-\pi=\left(\frac{-1}{48}\right)^n\frac{6\sqrt{6\pi}}{7\sqrt{n}}\cdot\exp\left(
\frac{-153}{784n}
+\frac{-1331}{76832n^2}
+\frac{1385667}{60236288n^3}
+\frac{-61756703}{5903156224n^4}
+\frac{3777837381}{826441871360n^5}
\Theta_{16}(n)\right).
\end{dmath*}
For all $n\geq 3$ we have that $0<\Theta_{16}(n)<1$
and for $n\rightarrow\infty$ we have that $\Theta_{16}(n) \rightarrow 1$.
\end{series}
\hrule

\begin{series}\label{ser17}
If we denote by $1/\pi_{17}(n)$ the truncated series
\begin{align*}
\frac{1}{\pi_{17}(n)} = \frac{161\sqrt{5}}{72}\sum_{k=0}^{n-1} \frac{\left(\frac{1}{2}\right)_k\left(\frac{1}{4}\right)_k\left(\frac{3}{4}\right)_k}{(k!)^3}\left(k+\frac{41}{644}\right)\left(\frac{-1}{25920}\right)^k,
\end{align*}
then we have that $\pi_{17}(n)\rightarrow\pi$ as $n\rightarrow\infty$ and we have the asymptotic expansion
\begin{dmath*}
\pi_{17}(n)-\pi=\left(\frac{-1}{25920}\right)^n\frac{180\sqrt{10\pi}}{161\sqrt{n}}\cdot\exp\left(
\frac{-103193}{414736n}
+\frac{-44090635}{21500743712n^2}
+\frac{299081123487449}{26751397332420096n^3}
\Theta_{17}(n)\right).
\end{dmath*}
For all $n\geq 1$ we have that $0<\Theta_{17}(n)<1$
and for $n\rightarrow\infty$ we have that $\Theta_{17}(n) \rightarrow 1$.
\end{series}
\hrule

\begin{series}\label{ser18}
If we denote by $1/\pi_{18}(n)$ the truncated series
\begin{align*}
\frac{1}{\pi_{18}(n)} = \frac{14}{9}\sum_{k=0}^{n-1} \frac{\left(\frac{1}{2}\right)_k\left(\frac{1}{4}\right)_k\left(\frac{3}{4}\right)_k}{(k!)^3}\left(k+\frac{1}{7}\right)\left(\frac{32}{81}\right)^k,
\end{align*}
then we have that $\pi_{18}(n)\rightarrow\pi$ as $n\rightarrow\infty$ and we have the asymptotic expansion
\begin{dmath*}
\pi_{18}(n)-\pi=\left(\frac{32}{81}\right)^n\frac{9\sqrt{2\pi}}{7\sqrt{n}}\cdot\exp\left(
\frac{-389}{784n}
+\frac{2939}{4802n^2}
+\frac{-302662495}{180708864n^3}
+\frac{148922927}{23059204n^4}
+\frac{-26488619802887}{826441871360n^5}
\Theta_{18}(n)\right).
\end{dmath*}
For all $n\geq 1$ we have that $0<\Theta_{18}(n)<1$
and for $n\rightarrow\infty$ we have that $\Theta_{18}(n) \rightarrow 1$.
\end{series}
\hrule

\begin{series}\label{ser19}
If we denote by $1/\pi_{19}(n)$ the truncated series
\begin{align*}
\frac{1}{\pi_{19}(n)} = \frac{4\sqrt{3}}{3}\sum_{k=0}^{n-1} \frac{\left(\frac{1}{2}\right)_k\left(\frac{1}{4}\right)_k\left(\frac{3}{4}\right)_k}{(k!)^3}\left(k+\frac{1}{8}\right)\left(\frac{1}{9}\right)^k,
\end{align*}
then we have that $\pi_{19}(n)\rightarrow\pi$ as $n\rightarrow\infty$ and we have the asymptotic expansion
\begin{dmath*}
\pi_{19}(n)-\pi=\left(\frac{1}{9}\right)^n\frac{3\sqrt{6\pi}}{4\sqrt{n}}\cdot\exp\left(
\frac{-1}{4n}
+\frac{37}{512n^2}
+\frac{-1195}{12288n^3}
+\frac{46945}{262144n^4}
+\frac{-1956919}{5242880n^5}
\Theta_{19}(n)\right).
\end{dmath*}
For all $n\geq 3$ we have that $0<\Theta_{19}(n)<1$
and for $n\rightarrow\infty$ we have that $\Theta_{19}(n) \rightarrow 1$.
\end{series}
\hrule

\begin{series}\label{ser20}
If we denote by $1/\pi_{20}(n)$ the truncated series
\begin{align*}
\frac{1}{\pi_{20}(n)} = \frac{20\sqrt{2}}{9}\sum_{k=0}^{n-1} \frac{\left(\frac{1}{2}\right)_k\left(\frac{1}{4}\right)_k\left(\frac{3}{4}\right)_k}{(k!)^3}\left(k+\frac{1}{10}\right)\left(\frac{1}{81}\right)^k,
\end{align*}
then we have that $\pi_{20}(n)\rightarrow\pi$ as $n\rightarrow\infty$ and we have the asymptotic expansion
\begin{dmath*}
\pi_{20}(n)-\pi=\left(\frac{1}{81}\right)^n\frac{9\sqrt{\pi}}{4\sqrt{n}}\cdot\exp\left(
\frac{-7}{32n}
+\frac{5}{2048n^2}
+\frac{323}{98304n^3}
+\frac{37313}{4194304n^4}
+\frac{-2253817}{167772160n^5}
\Theta_{20}(n)\right).
\end{dmath*}
For all $n\geq 2$ we have that $0<\Theta_{20}(n)<1$
and for $n\rightarrow\infty$ we have that $\Theta_{20}(n) \rightarrow 1$.
\end{series}
\hrule

\begin{series}\label{ser21}
If we denote by $1/\pi_{21}(n)$ the truncated series
\begin{align*}
\frac{1}{\pi_{21}(n)} = \frac{120\sqrt{3}}{49}\sum_{k=0}^{n-1} \frac{\left(\frac{1}{2}\right)_k\left(\frac{1}{4}\right)_k\left(\frac{3}{4}\right)_k}{(k!)^3}\left(k+\frac{3}{40}\right)\left(\frac{1}{2401}\right)^k,
\end{align*}
then we have that $\pi_{21}(n)\rightarrow\pi$ as $n\rightarrow\infty$ and we have the asymptotic expansion
\begin{dmath*}
\pi_{21}(n)-\pi=\left(\frac{1}{2401}\right)^n\frac{49\sqrt{6\pi}}{40\sqrt{n}}\cdot\exp\left(
\frac{-1141}{4800n}
+\frac{-23567}{9216000n^2}
+\frac{3626854163}{331776000000n^3}
+\frac{606576592193}{2123366400000000n^4}
+\frac{-44351997882926281}{12740198400000000000n^5}
\Theta_{21}(n)\right).
\end{dmath*}
For all $n\geq 1$ we have that $0<\Theta_{21}(n)<1$
and for $n\rightarrow\infty$ we have that $\Theta_{21}(n) \rightarrow 1$.
\end{series}
\hrule

\begin{series}\label{ser22}
If we denote by $1/\pi_{22}(n)$ the truncated series
\begin{align*}
\frac{1}{\pi_{22}(n)} = \frac{140\sqrt{11}}{99}\sum_{k=0}^{n-1} \frac{\left(\frac{1}{2}\right)_k\left(\frac{1}{4}\right)_k\left(\frac{3}{4}\right)_k}{(k!)^3}\left(k+\frac{19}{280}\right)\left(\frac{1}{9801}\right)^k,
\end{align*}
then we have that $\pi_{22}(n)\rightarrow\pi$ as $n\rightarrow\infty$ and we have the asymptotic expansion
\begin{dmath*}
\pi_{22}(n)-\pi=\left(\frac{1}{9801}\right)^n\frac{99\sqrt{22\pi}}{140\sqrt{n}}\cdot\exp\left(
\frac{-1199}{4900n}
+\frac{-344063}{153664000n^2}
+\frac{250800696613}{22588608000000n^3}
+\frac{40239008483873}{590315622400000000n^4}
+\frac{-959530669459838519}{295157811200000000000n^5}
\Theta_{22}(n)\right).
\end{dmath*}
For all $n\geq 1$ we have that $0<\Theta_{22}(n)<1$
and for $n\rightarrow\infty$ we have that $\Theta_{22}(n) \rightarrow 1$.
\end{series}
\hrule

\begin{series}\label{ser23}
If we denote by $1/\pi_{23}(n)$ the truncated series
\begin{align*}
\frac{1}{\pi_{23}(n)} = \frac{52780\sqrt{2}}{9801}\sum_{k=0}^{n-1} \frac{\left(\frac{1}{2}\right)_k\left(\frac{1}{4}\right)_k\left(\frac{3}{4}\right)_k}{(k!)^3}\left(k+\frac{1103}{26390}\right)\left(\frac{1}{99^4}\right)^k,
\end{align*}
then we have that $\pi_{23}(n)\rightarrow\pi$ as $n\rightarrow\infty$ and we have the asymptotic expansion
\begin{dmath*}
\pi_{23}(n)-\pi=\left(\frac{1}{99^4}\right)^n\frac{9801\sqrt{\pi}}{1820\sqrt{n}}\cdot\exp\left(
\frac{-1793359}{6624800n}
+\frac{-15333610991}{17555190016000n^2}
+\frac{9674999636298154667}{872247171134976000000n^3}
+\frac{-5815449554312741373727}{7704617412446652006400000000n^4}
+\frac{-28927901748784728363322170328567}{9114562398924389323571200000000000n^5}
\Theta_{23}(n)\right).
\end{dmath*}
For all $n\geq 1$ we have that $0<\Theta_{23}(n)<1$
and for $n\rightarrow\infty$ we have that $\Theta_{23}(n) \rightarrow 1$.
\end{series}
\hrule

\begin{series}\label{ser24}
If we denote by $1/\pi_{24}(n)$ the truncated series
\begin{align*}
\frac{1}{\pi_{24}(n)} = \frac{17\sqrt{3}}{12}\sum_{k=0}^{n-1} \frac{\left(\frac{1}{2}\right)_k\left(\frac{1}{3}\right)_k\left(\frac{2}{3}\right)_k}{(k!)^3}\left(k+\frac{7}{51}\right)\left(\frac{-1}{16}\right)^k,
\end{align*}
then we have that $\pi_{24}(n)\rightarrow\pi$ as $n\rightarrow\infty$ and we have the asymptotic expansion
\begin{dmath*}
\pi_{24}(n)-\pi=\left(\frac{-1}{16}\right)^n\frac{2\sqrt{\pi}}{\sqrt{n}}\cdot\exp\left(
\frac{-13}{72n}
+\frac{-1}{24n^2}
+\frac{665}{15552n^3}
+\frac{-95}{5184n^4}
+\frac{-6559}{1399680n^5}
\Theta_{24}(n)\right).
\end{dmath*}
For all $n\geq 5$ we have that $0<\Theta_{24}(n)<1$
and for $n\rightarrow\infty$ we have that $\Theta_{24}(n) \rightarrow 1$.
\end{series}
\hrule

\begin{series}\label{ser25}
If we denote by $1/\pi_{25}(n)$ the truncated series
\begin{align*}
\frac{1}{\pi_{25}(n)} = \frac{205\sqrt{3}}{96}\sum_{k=0}^{n-1} \frac{\left(\frac{1}{2}\right)_k\left(\frac{1}{3}\right)_k\left(\frac{2}{3}\right)_k}{(k!)^3}\left(k+\frac{53}{615}\right)\left(\frac{-1}{1024}\right)^k,
\end{align*}
then we have that $\pi_{25}(n)\rightarrow\pi$ as $n\rightarrow\infty$ and we have the asymptotic expansion
\begin{dmath*}
\pi_{25}(n)-\pi=\left(\frac{-1}{1024}\right)^n\frac{16\sqrt{\pi}}{5\sqrt{n}}\cdot\exp\left(
\frac{-469}{1800n}
+\frac{-13}{3000n^2}
+\frac{3482081}{243000000n^3}
\Theta_{25}(n)\right).
\end{dmath*}
For all $n\geq 1$ we have that $0<\Theta_{25}(n)<1$
and for $n\rightarrow\infty$ we have that $\Theta_{25}(n) \rightarrow 1$.
\end{series}
\hrule

\begin{series}\label{ser26}
If we denote by $1/\pi_{26}(n)$ the truncated series
\begin{align*}
\frac{1}{\pi_{26}(n)} = \frac{4717\sqrt{3}}{1500}\sum_{k=0}^{n-1} \frac{\left(\frac{1}{2}\right)_k\left(\frac{1}{3}\right)_k\left(\frac{2}{3}\right)_k}{(k!)^3}\left(k+\frac{827}{14151}\right)\left(\frac{-1}{250000}\right)^k,
\end{align*}
then we have that $\pi_{26}(n)\rightarrow\pi$ as $n\rightarrow\infty$ and we have the asymptotic expansion
\begin{dmath*}
\pi_{26}(n)-\pi=\left(\frac{-1}{250000}\right)^n\frac{250\sqrt{\pi}}{53\sqrt{n}}\cdot\exp\left(
\frac{-58405}{202248n}
+\frac{-107963}{63123848n^2}
+\frac{4656308646401}{344700144278208n^3}
\Theta_{26}(n)\right).
\end{dmath*}
For all $n\geq 1$ we have that $0<\Theta_{26}(n)<1$
and for $n\rightarrow\infty$ we have that $\Theta_{26}(n) \rightarrow 1$.
\end{series}
\hrule

\begin{series}\label{ser27}
If we denote by $1/\pi_{27}(n)$ the truncated series
\begin{align*}
\frac{1}{\pi_{27}(n)} = \frac{5\sqrt{3}}{4}\sum_{k=0}^{n-1} \frac{\left(\frac{1}{2}\right)_k\left(\frac{1}{3}\right)_k\left(\frac{2}{3}\right)_k}{(k!)^3}\left(k+\frac{1}{5}\right)\left(\frac{-9}{16}\right)^k,
\end{align*}
then we have that $\pi_{27}(n)\rightarrow\pi$ as $n\rightarrow\infty$ and we have the asymptotic expansion
\begin{dmath*}
\pi_{27}(n)-\pi=\left(\frac{-9}{16}\right)^n\frac{6\sqrt{\pi}}{5\sqrt{n}}\cdot\exp\left(
\frac{59}{1800n}
+\frac{-127}{1000n^2}
+\frac{-186271}{243000000n^3}
+\frac{2740273}{25000000n^4}
\Theta_{27}(n)\right).
\end{dmath*}
For all $n\geq 31$ we have that $0<\Theta_{27}(n)<1$
and for $n\rightarrow\infty$ we have that $\Theta_{27}(n) \rightarrow 1$.
\end{series}
\hrule

\begin{series}\label{ser28}
If we denote by $1/\pi_{28}(n)$ the truncated series
\begin{align*}
\frac{1}{\pi_{28}(n)} = \frac{3\sqrt{15}}{4}\sum_{k=0}^{n-1} \frac{\left(\frac{1}{2}\right)_k\left(\frac{1}{3}\right)_k\left(\frac{2}{3}\right)_k}{(k!)^3}\left(k+\frac{1}{9}\right)\left(\frac{-1}{80}\right)^k,
\end{align*}
then we have that $\pi_{28}(n)\rightarrow\pi$ as $n\rightarrow\infty$ and we have the asymptotic expansion
\begin{dmath*}
\pi_{28}(n)-\pi=\left(\frac{-1}{80}\right)^n\frac{10\sqrt{5\pi}}{9\sqrt{n}}\cdot\exp\left(
\frac{-149}{648n}
+\frac{-715}{52488n^2}
+\frac{2207033}{102036672n^3}
+\frac{-20699503}{2754990144n^4}
+\frac{5625196531}{2231542016640n^5}
\Theta_{28}(n)\right).
\end{dmath*}
For all $n\geq 3$ we have that $0<\Theta_{28}(n)<1$
and for $n\rightarrow\infty$ we have that $\Theta_{28}(n) \rightarrow 1$.
\end{series}
\hrule

\begin{series}\label{ser29}
If we denote by $1/\pi_{29}(n)$ the truncated series
\begin{align*}
\frac{1}{\pi_{29}(n)} = \frac{55\sqrt{7}}{36}\sum_{k=0}^{n-1} \frac{\left(\frac{1}{2}\right)_k\left(\frac{1}{3}\right)_k\left(\frac{2}{3}\right)_k}{(k!)^3}\left(k+\frac{13}{165}\right)\left(\frac{-1}{3024}\right)^k,
\end{align*}
then we have that $\pi_{29}(n)\rightarrow\pi$ as $n\rightarrow\infty$ and we have the asymptotic expansion
\begin{dmath*}
\pi_{29}(n)-\pi=\left(\frac{-1}{3024}\right)^n\frac{42\sqrt{21\pi}}{55\sqrt{n}}\cdot\exp\left(
\frac{-58429}{217800n}
+\frac{-436999}{131769000n^2}
+\frac{5956214110601}{430489323000000n^3}
\Theta_{29}(n)\right).
\end{dmath*}
For all $n\geq 1$ we have that $0<\Theta_{29}(n)<1$
and for $n\rightarrow\infty$ we have that $\Theta_{29}(n) \rightarrow 1$.
\end{series}
\hrule

\begin{series}\label{ser30}
If we denote by $1/\pi_{30}(n)$ the truncated series
\begin{align*}
\frac{1}{\pi_{30}(n)} = \frac{2\sqrt{3}}{3}\sum_{k=0}^{n-1} \frac{\left(\frac{1}{2}\right)_k\left(\frac{1}{3}\right)_k\left(\frac{2}{3}\right)_k}{(k!)^3}\left(k+\frac{1}{6}\right)\left(\frac{1}{2}\right)^k,
\end{align*}
then we have that $\pi_{30}(n)\rightarrow\pi$ as $n\rightarrow\infty$ and we have the asymptotic expansion
\begin{dmath*}
\pi_{30}(n)-\pi=\left(\frac{1}{2}\right)^n\frac{2\sqrt{\pi}}{\sqrt{n}}\cdot\exp\left(
\frac{-49}{72n}
+\frac{7}{6n^2}
+\frac{-65647}{15552n^3}
+\frac{7057}{324n^4}
+\frac{-202977547}{1399680n^5}
\Theta_{30}(n)\right).
\end{dmath*}
For all $n\geq 1$ we have that $0<\Theta_{30}(n)<1$
and for $n\rightarrow\infty$ we have that $\Theta_{30}(n) \rightarrow 1$.
\end{series}
\hrule

\begin{series}\label{ser31}
If we denote by $1/\pi_{31}(n)$ the truncated series
\begin{align*}
\frac{1}{\pi_{31}(n)} = \frac{20}{9}\sum_{k=0}^{n-1} \frac{\left(\frac{1}{2}\right)_k\left(\frac{1}{3}\right)_k\left(\frac{2}{3}\right)_k}{(k!)^3}\left(k+\frac{2}{15}\right)\left(\frac{2}{27}\right)^k,
\end{align*}
then we have that $\pi_{31}(n)\rightarrow\pi$ as $n\rightarrow\infty$ and we have the asymptotic expansion
\begin{dmath*}
\pi_{31}(n)-\pi=\left(\frac{2}{27}\right)^n\frac{6\sqrt{3\pi}}{5\sqrt{n}}\cdot\exp\left(
\frac{-457}{1800n}
+\frac{19}{450n^2}
+\frac{-12156583}{243000000n^3}
+\frac{11619823}{126562500n^4}
+\frac{-2235858759091}{13668750000000n^5}
\Theta_{31}(n)\right).
\end{dmath*}
For all $n\geq 3$ we have that $0<\Theta_{31}(n)<1$
and for $n\rightarrow\infty$ we have that $\Theta_{31}(n) \rightarrow 1$.
\end{series}
\hrule

\begin{series}\label{ser32}
If we denote by $1/\pi_{32}(n)$ the truncated series
\begin{align*}
\frac{1}{\pi_{32}(n)} = \frac{22\sqrt{3}}{15}\sum_{k=0}^{n-1} \frac{\left(\frac{1}{2}\right)_k\left(\frac{1}{3}\right)_k\left(\frac{2}{3}\right)_k}{(k!)^3}\left(k+\frac{4}{33}\right)\left(\frac{4}{125}\right)^k,
\end{align*}
then we have that $\pi_{32}(n)\rightarrow\pi$ as $n\rightarrow\infty$ and we have the asymptotic expansion
\begin{dmath*}
\pi_{32}(n)-\pi=\left(\frac{4}{125}\right)^n\frac{25\sqrt{\pi}}{11\sqrt{n}}\cdot\exp\left(
\frac{-2113}{8712n}
+\frac{580}{43923n^2}
+\frac{-262753543}{27551316672n^3}
+\frac{491042788}{17363069361n^4}
+\frac{-138715488606809}{3300372224138880n^5}
\Theta_{32}(n)\right).
\end{dmath*}
For all $n\geq 3$ we have that $0<\Theta_{32}(n)<1$
and for $n\rightarrow\infty$ we have that $\Theta_{32}(n) \rightarrow 1$.
\end{series}
\hrule

\begin{series}\label{ser33}
If we denote by $1/\pi_{33}(n)$ the truncated series
\begin{align*}
\frac{1}{\pi_{33}(n)} = 2\sum_{k=0}^{n-1} \frac{\left(\frac{1}{2}\right)_k\left(\frac{1}{2}\right)_k\left(\frac{1}{2}\right)_k}{(k!)^3}\left(k+\frac{1}{4}\right)\left(-1\right)^k,
\end{align*}
then we have that $\pi_{33}(n)\rightarrow\pi$ as $n\rightarrow\infty$ and we have the asymptotic expansion
\begin{dmath*}
\frac{1}{\pi}-\frac{1}{\pi_{33}(n)}=\left(-1\right)^n\frac{1}{\sqrt{\pi^3 n}}\cdot\exp\left(
\frac{1}{8n}
+\frac{-1}{8n^2}
+\frac{-13}{192n^3}
+\frac{7}{64n^4}
+\frac{81}{640n^5}
\Theta_{33}(n)\right).
\end{dmath*}
For all $n\geq 2$ we have that $0<\Theta_{33}(n)<1$
and for $n\rightarrow\infty$ we have that $\Theta_{33}(n) \rightarrow 1$.
\end{series}
\hrule

\begin{series}\label{ser34}
If we denote by $1/\pi_{34}(n)$ the truncated series
\begin{align*}
\frac{1}{\pi_{34}(n)} = \frac{3\sqrt{2}}{2}\sum_{k=0}^{n-1} \frac{\left(\frac{1}{2}\right)_k\left(\frac{1}{2}\right)_k\left(\frac{1}{2}\right)_k}{(k!)^3}\left(k+\frac{1}{6}\right)\left(\frac{-1}{8}\right)^k,
\end{align*}
then we have that $\pi_{34}(n)\rightarrow\pi$ as $n\rightarrow\infty$ and we have the asymptotic expansion
\begin{dmath*}
\pi_{34}(n)-\pi=\left(\frac{-1}{8}\right)^n\frac{4\sqrt{2\pi}}{3\sqrt{n}}\cdot\exp\left(
\frac{-11}{72n}
+\frac{-23}{324n^2}
+\frac{8171}{139968n^3}
+\frac{-1823}{209952n^4}
+\frac{-1668011}{37791360n^5}
\Theta_{34}(n)\right).
\end{dmath*}
For all $n\geq 6$ we have that $0<\Theta_{34}(n)<1$
and for $n\rightarrow\infty$ we have that $\Theta_{34}(n) \rightarrow 1$.
\end{series}
\hrule

\begin{series}\label{ser35}
If we denote by $1/\pi_{35}(n)$ the truncated series
\begin{align*}
\frac{1}{\pi_{35}(n)} = \frac{3}{2}\sum_{k=0}^{n-1} \frac{\left(\frac{1}{2}\right)_k\left(\frac{1}{2}\right)_k\left(\frac{1}{2}\right)_k}{(k!)^3}\left(k+\frac{1}{6}\right)\left(\frac{1}{4}\right)^k,
\end{align*}
then we have that $\pi_{35}(n)\rightarrow\pi$ as $n\rightarrow\infty$ and we have the asymptotic expansion
\begin{dmath*}
\pi_{35}(n)-\pi=\left(\frac{1}{4}\right)^n\frac{2\sqrt{\pi}}{\sqrt{n}}\cdot\exp\left(
\frac{-3}{8n}
+\frac{1}{4n^2}
+\frac{-31}{64n^3}
+\frac{41}{32n^4}
+\frac{-2723}{640n^5}
\Theta_{35}(n)\right).
\end{dmath*}
For all $n\geq 4$ we have that $0<\Theta_{35}(n)<1$
and for $n\rightarrow\infty$ we have that $\Theta_{35}(n) \rightarrow 1$.
\end{series}
\hrule

\begin{series}\label{ser36}
If we denote by $1/\pi_{36}(n)$ the truncated series
\begin{align*}
\frac{1}{\pi_{36}(n)} = \frac{21}{8}\sum_{k=0}^{n-1} \frac{\left(\frac{1}{2}\right)_k\left(\frac{1}{2}\right)_k\left(\frac{1}{2}\right)_k}{(k!)^3}\left(k+\frac{5}{42}\right)\left(\frac{1}{64}\right)^k,
\end{align*}
then we have that $\pi_{36}(n)\rightarrow\pi$ as $n\rightarrow\infty$ and we have the asymptotic expansion
\begin{dmath*}
\pi_{36}(n)-\pi=\left(\frac{1}{64}\right)^n\frac{8\sqrt{\pi}}{3\sqrt{n}}\cdot\exp\left(
\frac{-19}{72n}
+\frac{1}{324n^2}
+\frac{667}{139968n^3}
+\frac{2689}{209952n^4}
+\frac{-753139}{37791360n^5}
\Theta_{36}(n)\right).
\end{dmath*}
For all $n\geq 2$ we have that $0<\Theta_{36}(n)<1$
and for $n\rightarrow\infty$ we have that $\Theta_{36}(n) \rightarrow 1$.
\end{series}

\end{document}